\DeclareMathOperator{\supp}{supp}
\DeclareMathOperator{\defeq}{\mathrel{\mathop:}=}
\newtheorem{corollary}{Corollary}[section]
\newtheorem{lemma}{Lemma}[section]
\newtheorem{proposition}{Proposition}[section]
\newtheorem{remark}{Remark}[section]
\newtheorem{theorem}{Theorem}[section]
\theoremstyle{remark}
\numberwithin{equation}{section}
\newcommand{\Ew}{\mathcal{E}^\omega}
\newcommand{\E}{\mathcal{E}}
\newcommand{\mean}{\mathbb{E}}
\newcommand{\PR}{\mathbb{P}}
\newcommand{\N}{\mathbb{N}}
\newcommand{\R}{\mathbb{R}}
\newcommand{\F}{\mathcal{F}}
\renewcommand{\phi}{\varphi}
\renewcommand{\epsilon}{\varepsilon}
\def\namedlabel#1#2{\begingroup
   \def\@currentlabel{#2}%
   \label{#1}\endgroup
}
\begin{document}

\begin{frontmatter}

\title{Invariance Principle for symmetric Diffusions in a degenerate and unbounded stationary and ergodic Random Medium.}

%% or include affiliations in footnotes:
\author[mymainaddress]{Alberto Chiarini\corref{cor1}\footnote{The first author is supported by RTG 1845.}}
\ead{chiarini@math.tu-berlin.de}

\author[mymainaddress]{Jean-Dominique Deuschel\corref{cor2}}
\ead{deuschel@math.tu-berlin.de}

\address[mymainaddress]{Department of Mathematics, Technische Universit\"at zu Berlin, Stra\ss e des 17. Juni 136, 10623 Berlin, Germany}

\begin{abstract} We study a symmetric diffusion $X$ on $\R^d$ in divergence form in a stationary and ergodic environment, with measurable unbounded and degenerate coefficients $a^\omega$. The diffusion is formally associated with $L^\omega u = \nabla\cdot(a^\omega\nabla u)$, and we make sense of it through Dirichlet forms theory. We prove for $X$ a quenched invariance principle, under some moment conditions on the environment; the key tool is the sublinearity of the corrector obtained by Moser's iteration scheme. \\
\\
Nous \'etudions une diffusion sym\'etrique $X$ sur $R^d$ en forme de
divergence dans un environnement al\'eatoire stationnaire et ergodique, dont les
coefficients $a^\omega$ sont mesurables et d\'eg\'en\'er\'es. Cette diffusion qui est formellement
engendr\'ee par l'op\'erateur $L^\omega u=\nabla\cdot(a^\omega\nabla u)$, peut \^etre
d\'efinie \`a l'aide de la th\'eorie des formes de Dirichlet. Nous d\'emontrons pour $X$ un principe d'invariance presque s\^ur sous des conditions de
moment de l'environnement; l'outil crucial est la sous-lin\'earit\'e du
correcteur obtenu \`a l'aide de l' it\'eration introduite par J. Moser.
\end{abstract}

\begin{keyword}
invariance principle \sep homogenization \sep Moser's iteration \sep reversible dynamics \sep Dirichlet forms.
\MSC[2010] 60K37, 60F17
\end{keyword}

\end{frontmatter}

\section{Description of the Main Result}

We are interested in the study of reversible diffusions in a random environment. Namely, we are given an infinitesimal generator $L^\omega$ in divergence form
\begin{equation}\label{eq:generator}
 L^\omega u(x) = \nabla \cdot ( a^\omega (x)\nabla u(x)),\quad x\in\R^d
\end{equation}
where $a^\omega(x)$ is a symmetric $d$-dimensional matrix depending on a parameter $\omega$ which describes a random realization of the environment.

We model the environment as a probability space $(\Omega,\mathcal{G},\mu)$ on which a measurable group of transformations $\{\tau_x\}_{x\in\R^d}$ is defined. One may think of $\tau_x\omega$ as a translation of the environment $\omega\in\Omega$ in the direction $x\in\R^d$. The random field $\{a^\omega(x)\}_{x\in\R^d}$ will then be constructed simply by taking a random variable $a:\Omega\to\R^{d\times d}$ and by defining $a^\omega(x)\defeq a(\tau_x \omega)$, we will often use the notation $a(x;\omega)$ for $a^\omega(x)$ as  well.
We assume that the random environment $(\Omega,\mathcal{G},\mu)$, $\{\tau_x\}_{x\in\R^d}$ is stationary and ergodic. A precise formulation of the setup is given in section 3.

It is well known that when $x\to a^\omega(x)$ is bounded and uniformly elliptic, uniformly in $\omega$, then a quenched invariance principle holds for the diffusion process $X_t^\omega$ associated with $L^\omega$. This means that, for $\mu$-almost all $\omega\in\Omega$, the scaled process $X^{\epsilon,\omega}_t\defeq \epsilon X^{\omega}_{t/\epsilon^2}$ converges in distribution to a Brownian motion with a non-trivial covariance structure as $\epsilon$ goes to zero; this is known as diffusive limit. See for example the classic result of Papanicolau and Varadhan \cite{papvar} where the coefficients are assumed to be differentiable, and \cite{Osada1983} for measurable coefficients and more general operators.

Recently, a lot of efforts has been put into extending this result beyond the uniform elliptic case. For example \cite{fannjiang1997} consider a non-symmetric situation with uniformly elliptic symmetric part and unbounded antisymmetric part and the recent paper \cite{BaMathieu} proves an invariance principle for divergence form operators $L u= e^V \nabla\cdot(e^{-V} \nabla u)$ where $V$ is periodic and measurable. They only assume that $e^V+e^{-V}$ is locally integrable. For what concerns ergodic and stationary environment a recent result has been achieved in the case of random walk in random environment in \cite{deuschelslowikandresharnack}, \cite{deuschelslowikandres}. In these works moments of order greater than one are needed to get an invariance principle in the diffusive limit; \cite{deuschelslowikandres} and the techniques therein are the main inspiration for our paper.

The aim of our work is to prove a quenched invariance principle for an operator $L^\omega$ of the form \eqref{eq:generator} with a random field $a^\omega(x)$ which is ergodic, stationary and possibly unbounded and degenerate.
Denote by $a:\Omega\to\R^{d\times d}$ the $\mathcal{G}$-measurable random variable which describes the field through $a^\omega(x)=a(\tau_x\omega)$. We assume that $a$ is symmetric and that there exist $\Lambda,\lambda$, $\mathcal{G}$-measurable, positive and finite, such that:
\begin{itemize}
 \item[$(a.1)$\namedlabel{ass:a.1}{$(a.1)$}] for $\mu$-almost all $\omega\in\Omega$ and all $\xi\in\R^d$
\[
 \lambda(\omega)|\xi|^2\leq \langle a(\omega) \xi,\xi\rangle \leq \Lambda(\omega)|\xi|^2;
\]
 \item[$(a.2)$\namedlabel{ass:a.2}{$(a.2)$}] there exist $p,q\in[1,\infty]$ satisfying $1/p+1/q<2/d$ such that
\[
 \mean_\mu [\lambda^{-q}]<\infty,\quad\mean_\mu [\Lambda^{p}]<\infty,
\]
 \item[$(a.3)$\namedlabel{ass:a.3}{$(a.3)$}] as functions of $x$, $\lambda^{-1}(\tau_x\omega),\Lambda(\tau_x\omega)\in L^\infty_{loc}(\R^d)$ for $\mu$-almost all $\omega\in\Omega$.
\end{itemize}
Since $a^\omega(x)$ is meant to model a random field, it is not natural to assume its differentiability in $x\in\R^d$. Accordingly, the operator defined in \eqref{eq:generator} does not make any sense, and the techniques coming from Stochastic differential equations and It\^o calculus are not very helpful neither in constructing the diffusion process, nor in performing the relevant computation.

The theory of Dirichlet forms is the right tool to approach the problem of constructing a diffusion. Instead of the operator $L^\omega$ we shall consider the bilinear form obtained by $L^\omega$, formally integrating by parts, namely
\begin{equation}\label{eq:df}
\Ew(u,v)\defeq \sum_{i,j}\int_{\R^d} a^\omega_{ij}(x)\partial_i u(x)\partial_j v(x) dx
\end{equation}
for a proper class of functions $u,v\in\F^\omega\subset L^2(\R^d,dx)$, more precisely $\F^\omega$ is the closure of $C_0^\infty(\R^d)$ in $L^2(\R^d,dx)$ with respect to $\E+(\cdot,\cdot)_{L^2}$. It is a classical result of Fukushima \cite[Theorem 7.2.2]{fukushima1994dirichlet} and \cite[Ch. II example 3b]{rockner} that it is possible to associate to \eqref{eq:df} a diffusion process $\{X^\omega, \PR_x^\omega, x\in \R^d\}$ as soon as $(\lambda^\omega)^{-1}$ and $\Lambda^\omega$ are locally integrable. It is well known that there is a properly exceptional\footnote{A set $\mathcal{N}\subset \R^d$ is called properly exceptional if $\mathcal{N}$ is Borel, it has Lebesgue measure zero, and $\PR_x(X_t\in\mathcal{N}\mbox{ or }X_{t-}\in\mathcal{N} \mbox{ for some }t\geq0)=0$ for all $x\in \R^d\setminus \mathcal{N}$.
} set $\mathcal{N}^\omega\subset \R^d$ of $X^\omega$ such that
the associated process is uniquely determined up to the ambiguity of starting points in $\mathcal{N}^\omega$, in our situation the set of exceptional points may depend on the realization of the environment. Assumption \ref{ass:a.3} is designed to remove the ambiguity about the properly exceptional set $\mathcal{N}^\omega$. We will then prove that assumption \ref{ass:a.2} and ergodicity of the environment are enough to grant that the process $X^\omega$ starting from any $x\in \R^d$ does not explode for almost all realization of the environment.

\begin{remark}Moment conditions on the environment are a very natural assumption in order to achieve a quenched invariance principle for symmetric diffusions, indeed at least the first moment of $\Lambda$ and $\lambda^{-1}$ is required to obtain the result. As a counterexample one can consider a periodic environment, namely the $d$-dimensional torus $\mathbb{T}^d$, and the following generator in divergence form
	\[
	L f(x)\defeq \frac{1}{\phi(x)}\nabla\cdot (\phi(x)\nabla f(x)),
	\]
where $\phi:\mathbb{T}^d\to \R$ is defined by $\phi(x)\defeq 1_B(x) |x|^{-d}+1_{B^c}(x)$ being $B\subset \mathbb{T}^d$ a ball of radius one centered in the origin. It is clear that $\phi^\alpha\in L^1(\mathbb{T}^d)$ for all $\alpha<1$ but not for $\alpha=1$. If we look for example to $d=2$, then the radial part of the process associated to $L$, for the radius less than one, will be a Bessel process with parameter $\delta=0$ which is known to have a trap in the origin.
\end{remark}

\begin{remark} As observed in the previous remark, if we want to prove an invariance principle, dealing with symmetric diffusions forces the degeneracy of the diffusion coefficient not to be too strong. Namely, the diffusion coefficient can eventually be zero only on a set of null Lebesgue measure. On the other hand, in the case of non-symmetric diffusions the diffusion coefficient is allowed to vanish in open sets, as was proved in the periodic environment by \cite{Hairer20082462} and further extended and generalized in \cite{delaruerhodes}, \cite{sow2009homogenization}, \cite{pardoux2011homogenization}. In these works the strong degeneracy of the diffusion coefficient is compensated by the drift through the H\"ormander's condition; as a result and in contrast with our setting, the coefficients need to be smooth enough.
\end{remark}

Once the diffusion process $X^\omega$ is constructed, the standard approach to diffusive limit theorems consists in showing the weak compactness of the rescaled process and in the identification of the limit. In the case of bounded and uniformly elliptic coefficients the compactness is readily obtained by the Aronson-Nash estimates for the heat kernel. In order to identify the limit, we use the standard technique used in \cite{fannjiang1997}, \cite{kozlov1985} and \cite{Osada1983}; namely, we decompose the process $X_t^\epsilon$ into a martingale part, called the \emph{harmonic coordinates} and a fluctuation part, called the \emph{correctors}. The martingale part is supposed to capture the long time asymptotic of $X_t^\epsilon$, and will characterize the diffusive limit.

The challenging part is to show that the correctors are uniformly small for almost all realization of the environment, this is attained generalizing Moser's arguments \cite{moser1964} to get a maximal inequality for positive subsolutions of uniformly elliptic, divergence form equations. In this sense the relation $1/p+1/q<2/d$ is designed to let the Moser's iteration scheme work.  This integrability assumption firstly appeared in \cite{edmundspeletier} in order to extend the results of De Giorgi and Nash to degenerate elliptic equations.  A similar condition was also recently exploited in \cite{Zhikov} to obtain estimates of Nash - Aronson type for solutions to degenerate parabolic equations.  They look to  generator of the form $\mathcal{L} u = \partial_t u - e^{-V}\nabla \cdot ( e^V \nabla u)$,
with the assumption that $\sup_{r\geq 1}|r|^{-d}\int_{|x|\leq r} e^{pV}+e^{-qV} dx <\infty$.

We want to stress out that condition \ref{ass:a.3} is needed to prove neither the sublinearity of the corrector nor its existence, we used it only to have a more regular density of the semigroup associated to $X^\omega$ and avoid some technicalities due to exceptional sets in the framework of Dirichlet form theory.

Once the correctors are shown to be sublinear, the standard invariance principle for martingales \cite{helland1982} gives the almost sure convergence to the Wiener measure.
\begin{theorem}\label{thm:invprinc} Assume \ref{ass:a.1}, \ref{ass:a.2} and \ref{ass:a.3} are satisfied. Let $\mathbf{M}^\omega\defeq (X_t^\omega, \PR_x^\omega)$, $x\in\R^d$, be the minimal diffusion process associated to $(\Ew,\F^\omega)$ on $L^2(\R^d,dx)$. Then the following hold
\begin{itemize}
\item[(i)] For $\mu$-almost all $\omega\in\Omega$ the limits
\[
\lim_{t\to\infty} \frac{1}{t}\mean_0^\omega[X^\omega_t(i) X^\omega_t(j)] =\mathbf{d}_{ij}\quad i,j=1,...,d
\]
exist and are deterministic constants.
\item[(ii)] For $\mu$-almost all $\omega\in\Omega$, the laws of the processes $X_t^{\omega,\epsilon}\defeq \epsilon X^\omega_{t/\epsilon^2}$, $\epsilon>0$ over $C([0,+\infty),\R^d)$ converge weakly as $\epsilon\to 0$ to a Wiener measure  having the covariance matrix equal to  $\mathbf{D}=[\mathbf{d}_{ij}]$. Moreover $\mathbf{D}$ is a positive definite matrix.
\end{itemize}
\end{theorem}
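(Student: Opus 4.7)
The plan is to follow the Kipnis--Varadhan / Kozlov / Papanicolaou--Varadhan scheme, adapted to the degenerate coefficients, with Moser's iteration replacing uniform ellipticity at the sublinearity step. The pivot is the \emph{environment seen from the particle} $\omega_t \defeq \tau_{X_t^\omega}\omega$, a $\mu$--symmetric Markov process on $\Omega$ whose Dirichlet form inherits the structure of $\Ew$; stationarity and ergodicity of $\{\tau_x\}$ transfer to $\omega_t$. For every $\xi\in\R^d$ I would construct the \emph{corrector} $\chi_\xi$ via Kozlov's variational problem
\[
\mathbf{d}_{\xi\xi} \;\defeq\; \inf_{\psi\in L^2_{\mathrm{pot}}(\Omega)} \mean_\mu\!\left[\langle a(\omega)(\xi-\psi(\omega)),\,\xi-\psi(\omega)\rangle\right],
\]
where $L^2_{\mathrm{pot}}$ denotes the $\mu$--closure of spatial gradients of bounded stationary functions; the bounds in \ref{ass:a.2} make the quadratic form simultaneously coercive (through $\lambda^{-q}$) and continuous (through $\Lambda^p$), yielding a unique minimizer $\psi_\xi$. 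Lifting $\psi_\xi$ to a curl--free cocycle and integrating produces $\chi_\xi:\R^d\times\Omega\to\R$ such that $\xi\cdot x - \chi_\xi(x,\omega)$ is weakly $L^\omega$--harmonic. Fukushima's decomposition then gives
\[
\xi\cdot X_t^\omega \;=\; \xi\cdot X_0^\omega \,+\, M_t^{\xi,\omega} \,+\, \chi_\xi(X_t^\omega,\omega) - \chi_\xi(X_0^\omega,\omega),
\]
with $M^{\xi,\omega}$ a continuous $\PR_0^\omega$--martingale whose quadratic variation, via Birkhoff's ergodic theorem on $\omega_s$, satisfies $t^{-1}\langle M^{\xi,\omega}\rangle_t \to 2\mathbf{d}_{\xi\xi}$ almost surely; Helland's martingale CLT then yields the invariance principle for $\epsilon M_{t/\epsilon^2}^{\xi,\omega}$ for $\mu$--a.e.\ $\omega$.

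The main obstacle is the \emph{quenched sublinearity of the corrector},
\[
\lim_{\epsilon\to 0}\sup_{0\le t\le T}\epsilon\,\bigl|\chi_\xi\bigl(X_{t/\epsilon^2}^\omega,\omega\bigr)\bigr| \;=\; 0 \qquad \PR_0^\omega\text{--a.s., for }\mu\text{--a.e.\ }\omega.
\]
I would handle this in two moves. First, a parabolic De Giorgi--Moser iteration yields a quenched on--diagonal upper bound on the heat kernel of $X^\omega$, confining the rescaled trajectory to a deterministic ball and reducing the problem to the pointwise bound $|\chi_\xi(x,\omega)|=o(|x|)$ as $|x|\to\infty$. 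Second, I would establish an elliptic Moser maximal inequality of the form
\[
\sup_{B_r}|u|^2 \;\le\; C(\omega,r)\, r^{-d} \int_{B_{2r}} |u|^2\,dx
\]
for every non--negative $L^\omega$--subsolution $u$; the iteration combines the Sobolev embedding with H\"older's inequality at exponents $p$ and $q$, and the effective integrability gain per step is exactly $2/d-1/p-1/q$, so condition \ref{ass:a.2} is precisely what keeps the iteration from stalling. The prefactor $C(\omega,r)$ is kept uniformly bounded in $r$ by the spatial ergodic theorem applied to $\Lambda^p$ and $\lambda^{-q}$, which is legitimate thanks to \ref{ass:a.3}. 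Applied to $|\xi\cdot x - \chi_\xi|$ (harmonic for $L^\omega$) and combined with the mean--zero property of $\psi_\xi$ in $L^2_{\mathrm{pot}}$ (which forces $r^{-(d+2)}\int_{B_r}|\chi_\xi|^2\,dx\to 0$), the maximal inequality delivers the desired pointwise sublinearity.

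Statement (i) then follows by polarisation from $t^{-1}\langle M^{\xi,\omega}\rangle_t\to 2\mathbf{d}_{\xi\xi}$ and the sublinearity estimate applied to the cross--terms in $\xi\cdot X_t^\omega$, and the first half of (ii) is the martingale CLT propagated to $X^\omega$ through sublinearity. Non--degeneracy of $\mathbf{D}$ is immediate from the variational characterisation: if $\langle\mathbf{D}\xi,\xi\rangle=0$ for some $\xi\neq 0$, then \ref{ass:a.1} forces $\psi_\xi=\xi$ $\mu$--a.s., contradicting the fact that nonzero constant vector fields never lie in $L^2_{\mathrm{pot}}$ under an ergodic stationary environment.
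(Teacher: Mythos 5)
Your overall architecture matches the paper's: corrector via the Kozlov variational problem in the space $L^2_{\mathrm{pot}}$ inside $L^2(a)$, the Fukushima decomposition $\xi\cdot X_t^\omega = M_t^{\xi,\omega} + \chi_\xi(X_t^\omega,\omega)$, Helland's martingale CLT for the martingale part, Birkhoff's ergodic theorem for the quadratic variation, and Moser's elliptic iteration (under the exponent condition $1/p+1/q<2/d$, which makes $\rho>2p^*$ so the iteration gains) to get the maximal inequality that upgrades $L^\alpha$ decay of the rescaled corrector on balls to uniform decay. The non--degeneracy argument via the mean--zero property of $L^2_{\mathrm{pot}}$ is also the same idea as Proposition~\ref{prop:pos}, which proves the quantitative lower bound $\langle \mathbf{D}\xi,\xi\rangle\geq 2\mean_\mu[\lambda^{-1}]^{-1}|\xi|^2$.

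Where you genuinely depart from the paper is in the \emph{confinement} step, and this is where a gap opens. You propose a parabolic De~Giorgi--Moser iteration to get a quenched on--diagonal heat kernel upper bound for $X^\omega$ and thereby confine $\epsilon X^\omega_{\cdot/\epsilon^2}$ to a deterministic ball. Under the degenerate moment conditions \ref{ass:a.2} this is far from free: the constants in any parabolic Moser estimate necessarily depend on local averages of $\lambda^{-q}$ and $\Lambda^p$, and these cannot be controlled uniformly over small space--time cylinders (the environment is not volume--doubling at small scales, cf.\ Remark~\ref{rem:voldoubling}). The paper itself flags exactly this: the parabolic Harnack route is deferred to a companion paper, and even there the constant depends strongly on the space--time cylinder and does not give H\"older continuity of the density. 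The paper instead avoids all parabolic regularity by a lightweight bootstrap: once $\epsilon\,y(X^\omega_{\cdot/\epsilon^2},\omega)$ is known to converge to a non--degenerate Brownian motion, the exit--time event $\{\sup_{t\le T}|\epsilon X^\omega_{t/\epsilon^2}|>R\}$ is controlled by the Gaussian tails of the limiting martingale (together with the already--established smallness of the corrector inside balls), and one then lets $R\to\infty$. This bootstrap is both logically cleaner and strictly cheaper in terms of assumptions than a heat kernel estimate; you should replace your parabolic step with it, or else supply a complete parabolic argument and address the constant dependence issue.

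Two smaller inaccuracies. First, the uniform boundedness of the Moser constant $C(\omega,r)$ as $r\to\infty$ is a consequence of stationarity, ergodicity, and the moment conditions \ref{ass:a.2}; assumption \ref{ass:a.3} plays no role there (it is used only to remove exceptional--set ambiguities in the Dirichlet form construction and to ensure continuity of the harmonic coordinates). Second, the $L^2$ decay $r^{-(d+2)}\int_{B_r}|\chi_\xi|^2\to 0$ does not follow directly from the mean--zero property of $\psi_\xi$; the paper deduces decay of the rescaled corrector in $L^{2p^*}(B_R)$ from weak convergence plus the compact Sobolev embedding $W^{1,2q/(q+1)}(B_R)\hookrightarrow L^{2p^*}(B_R)$, which requires the full strength of \ref{ass:a.2} (see Lemma~\ref{lem:control}). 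Jensen then gives the $L^2$ version, so your statement is not wrong, but the compact--embedding step is an essential part of the argument and should not be elided.
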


\paragraph{Description of the method} One of the main objective of the paper is to show that the correctors $\chi  = (\chi^1,\dots,\chi^d):\R^d\times \Omega \to \R^d$ are locally sublinear, namely that
\[
\limsup_{\epsilon\to 0} \sup_{|x|\leq R} \epsilon|\chi(x/\epsilon,\omega)| = 0,\quad \forall R>0,\,\mu\mbox{-a.s.}
\]

To obtain a priori estimates on the correctors $\chi$ we exploit the fact that they are constructed in such a way that they are solutions of a Poisson's equation, which is formally given by
\begin{equation}\label{eq:correctoreq}
\nabla \cdot (a^\omega(x) \nabla \chi^k(x,\omega)) = \nabla \cdot (a^\omega(x) \nabla \pi^k(x)),
\end{equation}
where $\pi^k(x) := x_k$ is the projection to the $k$th-coordinate.

The equation above has been studied extensively and generalized in many directions, also beyond the linear case, for an introduction, see for example the monographs \cite{evans2010partial}, \cite{gilbarg2001elliptic} and  for recent developments in the theory see \cite{heinonen2006nonlinear}. When the matrix $a^\omega$ is uniformly elliptic and bounded, uniformly in $\omega\in\Omega$, namely if
\[
c^{-1}|\xi|^2 \leq  \langle a^\omega(x)\xi,\xi \rangle \leq c |\xi|^2
\]
for some $c\geq 1$, it is natural to look for weak solutions to \eqref{eq:correctoreq} in the classical Sobolev space of square integrable functions with square integrable weak derivatives. It is a classical result due to Moser \cite{moser1964} that an elliptic Harnack inequality holds and a result from Nash \cite{nash1958} and De Giorgi \cite{zbMATH03138423} that solutions are H\"older continuous.

The situation changes dramatically when the coefficients are degenerate. In the most typical situation there is a positive weight $\theta:\R^d\to \R$ and a constant $c>1$ such that
\[
\theta(x)|\xi|^2 \leq  \langle a^\omega(x)\xi,\xi \rangle \leq c\,\theta(x)|\xi|^2.
\]
In this setting one looks for solutions to equation \eqref{eq:correctoreq} in the weighted Sobolev space $W^{1,2}(\R^d,\theta)$ which is the set of weakly differentiable functions $u:\R^d\to \R$ such that
\[
\int_{\R^d} |u|^2 \theta dx<\infty,\mbox{ and } \int_{\R^d} |\nabla u|^2 \theta dx<\infty,
\]
we refer to \cite{heinonen2006nonlinear}, \cite{zhikov1998weighted} for more information on weighted Sobolev spaces. It was shown in \cite{fabes1982local} that in order to have local regularity of solutions to \eqref{eq:correctoreq} it is enough to have weights which are volume doubling, namely such that there exists a constant $C>0$ for which
\[
\int_{B_{2R}(x)} \theta(y)\,dy \leq C \int_{B_R(x)}\theta(y)\,dy,\quad \forall R>0,\,\forall x\in \R^d,
\]
and which satisfy weighted Sobolev and Poincar\'e inequalities. This weights are known in general as $p$-admissible (See \cite{heinonen2006nonlinear}), but for our discussion of the linear operator $L^\omega=\nabla\cdot(a^\omega\nabla\,)$ it is enough to look at $2$-admissible weights.

\begin{remark}\label{rem:voldoubling}In our setting it is not possible to expect the volume doubling property for small balls. The ergodic theorem ensures only that for all $x\in \R^d$ and $\mu$-almost all $\omega\in\Omega$ there exist $R_0^\omega(x)>0$ and a dimensional constant $C>0$ such that for all $R>R_0^\omega(x)$
	\[
	\int_{B_{2R}(x)} \Lambda^\omega(y)\,dy \leq C \int_{B_R(x)}\Lambda^\omega(y)\,dy,
	\]
	being $B_R(x)$ the ball of center $x$ and radius $R$.
	We remark that the constant $R_0^\omega(x)$ cannot be taken uniformly in $x\in \R^d$, and $\sup_{x\in \R^d} R_0^\omega(x)$ may be infinite.
\end{remark}

Examples of $2$-admissible weights are the functions in the Muckenhaupt's class $A_2$, we refer to \cite{fabes1982local}, \cite{heinonen2006nonlinear}, \cite{torchinsky2012real} and to the original research paper \cite{muckenhoupt1972weighted} for an exhaustive treatment on the subject. Here we briefly recall that  the class $A_2$ is the set of all non negative functions $\theta:\R^d\to [0,\infty]$ for which there exists a constant $C>0$ such that
\begin{equation}\label{eq:muckenhaupt}
\sup_{R>0} \sup_{x\in\R^d} \biggr(\frac{1}{|B_R(x)|}\int_{B_R(x)} \theta(y)\,dy\biggl)
\biggr(\frac{1}{|B_R(x)|}\int_{B_R(x)} \theta^{-1}(y)\,dy\biggl) \leq C.
\end{equation}
It is well known that weights in the class $A_2$ are volume doubling and satisfy a weighted Sobolev inequality. To be more precise, denote by $\theta(B):=\int_B\theta dx$, then there exist constants $C,\delta>0$ such that for all $1\leq k\leq d/(d-1)+\delta$
\begin{equation}\label{ineq:muck}
\biggl(\frac{1}{\theta(B)}\int_{B}|u|^{2k} \theta dx\biggr)^\frac{1}{k}\leq C |B|^\frac{2}{d} \frac{1}{\theta(B)}\int_{B}|\nabla u|^{2} \theta dx\quad \biggl( \leq C |B|^\frac{2}{d} \frac{\E(u,u)}{\theta(B)}\biggr)
\end{equation}
being $B$ any ball in $\R^d$ and $u\in C_0^\infty(B)$.

Working with admissible weights has the advantage of being able to state H\"older continuity results for weak solutions to \eqref{eq:correctoreq}. It is still an open problem to identify the optimal conditions that a weight has to satisfy in order to grant continuity of weak solutions, see the survey paper \cite{cavalheiro2008weighted} for details.

Many authors relied on Muckenhaupt's classes and weighted Sobolev spaces to prove homogenization results. We quote \cite{de1992homogenization} for the periodic case and \cite{engstrom2006homogenization} for the ergodic case. In the latter the weights are assumed to belong to a Muckenhaupt class for almost all the realizations of the environment.

In our paper, to prove the sublinearity of the corrector, we assume that the coefficient $a^\omega(x)$ satisfies
\[
\lambda^\omega(x)|\xi|^2 \leq  \langle a^\omega(x)\xi,\xi \rangle \leq \Lambda^\omega(x)|\xi|^2,\quad \mu\mbox{-a.s.}
\]
and $\mean_\mu[\lambda^{-q}]$, $\mean_\mu[\lambda^{-q}]<\infty$ with $1/p+1/q<2/d$. In this case, the weights $\lambda^\omega(x) := \lambda(\tau_x\omega)$ and $\Lambda^\omega(x):= \Lambda(\tau_x\omega)$ do not belong to any of the classes mentioned above, since, as explained in Remark \ref{rem:voldoubling}, in general the measures $\lambda^\omega(x)dx$ and $\Lambda^\omega(x)dx$ are not volume doubling. The ergodicity of the environment and the fact that $\mean_\mu[\lambda^{-1}],\mean_\mu[\Lambda]$ are finite ensure only that
\[
\sup_{x\in \R^d}\limsup_{R\to\infty}\frac{1}{|B_R(x)|}\int_{B_R(x)} \frac{1}{\lambda^\omega(y)}\,dy<\infty,\quad\sup_{x\in \R^d}\limsup_{R\to\infty}\frac{1}{|B_R(x)|}\int_{B_R(x)}\Lambda^\omega(y) dy <\infty,
\]
$\mu$-almost surely, and, contrary to \eqref{eq:muckenhaupt}, it is not possible to interchange the supremum and the limit staying finite.

Another characterizing feature of our model is that we don't assume $\Lambda^\omega\leq c \lambda^\omega$. We cannot expect regularity for solutions to \eqref{eq:correctoreq}, however, we show that the ergodicity of the environment and the moment conditions \ref{ass:a.2} are enough to obtain the sublinearity of the correctors; this is done in the same spirit of \cite{fannjiang1997} where an unbounded but uniformly bounded away from zero non-symmetric case is considered.

Moser's method to derive a maximal inequality for solutions to \eqref{eq:correctoreq} is based on two steps. One wants first to get a Sobolev inequality to control some $L^\rho$-norm in terms of the Dirichlet form and then control the Dirichlet form of any solution by a lower moment. This sets up an iteration which leads to control the supremum of the solution on a ball by a lower norm on a slightly bigger ball. In the uniform elliptic and bounded case
this is rather standard and it is possible to control the $L^{2d/(d-2)}$-norm of a solution by its $L^2$-norm through the classical Sobolev inequality. In the case of Muckenhaupt's weights the iteration can be set using the Sobolev inequality \eqref{ineq:muck} on the weighted Sobolev space.

In our paper we are able to control locally on balls the $\rho$-norm of a solution by its $2 p^*$-norm, with
$\rho = 2qd/(q(d-2)+d)$ and $p^*=p/(p-1)$. For the Moser iteration we need $\rho>2 p^*$ which is equivalent to condition $1/p+1/q<2/d$. Indeed, by means of H\"older's inequality and the standard Sobolev inequality, for a ball $B$ of radius $R>0$ and center $x\in \R^d$, we can write
\[
\biggl(\frac{1}{|B_R(x)|}\int_{B_R(x)}|u|^{\rho/p^*}\Lambda^\omega dy\biggr)^\frac{2 p^*}{\rho}\leq C(\lambda,\Lambda,x,R)|B_R(x)|^\frac{2}{d}\frac{\E(u,u)}{|B_R(x)|} .
\]
where
\[
C(\lambda,\Lambda,x,R) := C(d) \biggl(\frac{1}{|B_R(x)|}\int_{B_R(x)} (\lambda^\omega(y))^{-q}dy\biggr)^\frac{1}{q}\biggl( \frac{1}{|B_R(x)|}\int_{B_R(x)} (\Lambda^\omega(y))^{p}dy\biggr)^\frac{2}{\rho(p-1)},
\]
being $C(d)>0$ a constant depending only on the dimension.
The Sobolev inequality above must be compared with \eqref{ineq:muck}. In opposition to \eqref{ineq:muck}, the constant in front of the inequality is strongly dependent on $x\in\R^d$ and $R>0$.
Therefore, the estimates we derive in Section 2 to control the Dirichlet form of a solution by its $2p/(p-1)$-norm, although following from very well established arguments, are a necessary step in order to clarify the dependence of the constants on
\[
\frac{1}{|B_R(x)|}\int_{B_R(x)} (\lambda^\omega(y))^{-q}dy,\quad \frac{1}{|B_R(x)|}\int_{B_R(x)} (\Lambda^\omega(y))^{p}dy.
\]
The maximal inequality which we obtain in Section 2.3 behaves nicely in the scaling limit, due to the ergodic theorem,  and is enough to state the sublinearity of the corrector.

\begin{remark} It is believed that the optimal condition for a quenched invariance principle to hold is $\mean_\mu[\lambda^{-1}]$, $\mean_\mu[\Lambda]<\infty$. In periodic environment this has been proven recently in \cite{BaMathieu} using ideas coming from harmonic analysis and Muckenhaupt's weights. The authors consider a generator in divergence form given by $L u= e^V \nabla\cdot(e^{-V} \nabla u)$, where $V:\R^d\to \R$ is periodic and measurable such that $e^V+e^{-V}$ is locally integrable. Their argument relies on a time change and on the Sobolev inequality
\[
\biggl(
\int_{\mathbb{T}^d}|u|^{r} w\, dx\biggr)^\frac{2}{r}\leq C \int_{\mathbb{T}^d}|\nabla u|^{2} e^{-V} dx\quad \]
where $\mathbb{T}^d$ is the $d$-dimensional torus, $u\in C^1(\mathbb{T}^d)$ centered, $r>2$ and $w$ is expressed as an Hardy-Littlewood maximal function.

In this setting it is not possible to use Moser's iteration technique to prove the sublinearity of the corrector on balls, since to bound the right hand side by the $L^s(\mathbb{T}^d,w)$ norm for some $s<r$ would require further assumptions on the integrability of $e^V+e^{-V}$. In fact, they don't prove sublinearity of the correctors on balls but along the path of the process. This approach relies on a global uniform upper bound for the density of the process, which can be established due to the compactness of the periodic environment, and the fact that the process of the environment seen from the
particle is just the projection of the diffusion on the torus $\mathbb{T}^d$.
\end{remark}

\begin{remark} Under the conditions \ref{ass:a.1}, \ref{ass:a.2} and that a quenched invariance principle holds, Moser's method can be successfully applied to obtain a quenched local central limit theorem for the process associated to $(\E^\omega,\F^{\Lambda,\omega})$ on $L^2(\R^d,\Lambda^\omega dx)$, being $\F^{\Lambda,\omega}$ the closure of $(\E^\omega,C_0^\infty(\R^d))$ in $L^2(\R^d,\Lambda^\omega dx)$, see \cite{deuschelslowikandresharnack}, \cite{chiarinideuschelLCLT}. In these papers, the proof relies on a parabolic Harnack inequality, whose constant depends strongly on the space-time cylinder considered. Thus, it cannot be applied to obtain H\"older continuity of the density. Nevertheless, it is shown that in the diffusive limit it is possible to control oscillations by means of the ergodic theorem.
	
Despite the fact that a quenched invariance principle is believed to hold for $\mean_\mu[\lambda^{-1}]$, $\mean_\mu[\Lambda]<\infty$, it was shown in \cite{deuschelslowikandresharnack} that the condition $\mean_\mu[\lambda^{-q}]$, $\mean_\mu[\Lambda^p]<\infty$, with $1/p+1/q<2/d$ is sharp, for general stationary and ergodic random environment, for a quenched local central limit theorem to hold.
	
\end{remark}

A summary of the paper is the following. In Section \ref{sec:sobmos} we develop a priori estimates for solutions to elliptic equations, following Moser's scheme. In this section the random environment plays no role, and accordingly we have deterministic inequalities in a fairly general framework. Also, we construct a minimal diffusion process associated to the deterministic version of \eqref{eq:df} and we discuss its properties.

In Section \ref{sec:diffre} we apply the results obtained in Section \ref{sec:sobmos} to construct a diffusion process for almost all $\omega\in\Omega$, we define the environment process, and we show how to use it in order to prove that the diffusion is non-explosive.

In Section \ref{sec:corr} we prove the existence of the harmonic coordinates and of the corrector. In particular we prove that we can decompose our process in the sum of a martingale part, of which we can compute exactly the quadratic variation, and a fluctuation part.

In Section \ref{sec:proof} we use the results of the previous Sections in order to prove the sublinearity of the correctors and, given that, Theorem \ref{thm:invprinc}.

\section{Sobolev's inequality and Moser's iteration scheme}\label{sec:sobmos}

\subsection{Notation and Basic Definitions}

In this section we forget about the random environment. With a slight abuse of notation we will note with $a(x)$, $\lambda(x)$ and $\Lambda(x)$ the deterministic versions of $a(\tau_x\omega)$, $\lambda(\tau_x \omega)$ and $\Lambda(\tau_x \omega)$.

We are given a symmetric matrix $a:\R^d\to \R^{d\times d}$ such that
\begin{itemize}
\item[$(b.1)$\namedlabel{ass:b.1}{$(b.1)$}]there exist $\lambda,\Lambda:\R^d\to\R$ non-negative such that for almost all $x\in\R^d$ and $\xi\in\R^d$
 \[
  \lambda(x)|\xi|^2\leq \langle a(x) \xi,\xi\rangle \leq \Lambda(x)|\xi|^2,
 \]
\item[$(b.2)$\namedlabel{ass:b.2}{$(b.2)$}]there exist $p,q\in[1,\infty]$ satisfying  $1/p+1/q<2/d$ such that
 \[
  \sup_{r\geq 1} \frac{1}{|B_r|} \int_{B_r} \Lambda^p +\lambda^{-q} \,dx <\infty.
 \]
\end{itemize}
\begin{remark}
 By means of the ergodic theorem, \ref{ass:a.1} and \ref{ass:a.2} imply that the function $x\to a(\tau_x\omega)$ satisfies \ref{ass:b.1} and \ref{ass:b.2} for $\mu$-almost all $\omega\in\Omega$.
\end{remark}

\begin{remark}\label{rem:embedding} Let $B\subset\R^d$ be a ball. Assumptions \ref{ass:b.1} and $\ref{ass:b.2}$ imply that, for $u\in C_0^\infty(B)$,
\[
\|1_B\lambda^{-1}\|_q^{-1}\|\nabla u\|^2_{2q/q+1}\leq \int_{\R^d}\langle a \nabla u,\nabla u\rangle dx\leq \|1_B \Lambda\|_p \|\nabla u\|_{2p^*}^2,
\]
where $p^* = p/(p-1)$. The relation $1/p+1/q<2/d$ is designed in such a way that the Sobolev's conjugate of $2q/(q+1)$ in $\R^d$, which is given by
\begin{equation}\label{eq:rho}
\rho(q,d):= \frac{2qd}{q(d-2)+d},
\end{equation}
satisfies $\rho(q,d) > 2 p^*$, which implies that the Sobolev space $W^{1,2q/(q+1)}(B)$ is compactly embedded in $L^{2p^*}(B)$, see for example Chapter 7 in \cite{gilbarg2001elliptic}.
\end{remark}

Since the generator given in \eqref{eq:generator} is not well defined, in order to construct a process formally associated to it, we must exploit Dirichlet forms theory. We shall here present some basic definitions coming from the Dirichlet forms theory; for a complete treatment on the subject see \cite{fukushima1994dirichlet}.

Let $X$ be a locally compact metric separable space, and $m$ a positive Radon measure on $X$ such that $\supp[m]=X$. Consider the Hilbert space $L^2(X,m)$ with scalar product $\langle \cdot,\cdot\rangle$. We call a \emph{symmetric} form, a non-negative definite bilinear form $\E$ defined on a dense subset $\mathcal{D}(\E)\subset L^2(X,m)$. Given a symmetric form $(\E,\mathcal{D}(\E))$ on $L^2(X,m)$,  the form $\E_\beta\defeq \E+\beta \langle \cdot,\cdot\rangle$ defines itself a symmetric form on $L^2(X,m)$ for each $\beta>0$. Note that $\mathcal{D}(\E)$ is a pre-Hilbert space with inner product $\E_\beta$. If $\mathcal{D}(\E)$ is complete with respect to $\E_\beta$, then $\E$ is said to be \emph{closed}.

A closed symmetric form $(\E,\mathcal{D}(\E))$ on $L^2(X,m)$ is called a \emph{Dirichlet form} if it is Markovian, namely if for any given $u\in \mathcal{D}(\E)$, then $v=(0\vee u)\wedge1$ belongs to $\mathcal{D}(\E)$ and $\E(v,v)\leq \E(u,u)$.

We say that the Dirichlet form $(\E,\mathcal{D}(\E))$ on $L^2(X,m)$ is \emph{regular} if there is a subset  $\mathcal{H}$ of $\mathcal{D}(\E)\cap C_0(X)$ dense in $\mathcal{D}(\E)$ with respect to $\E_1$ and dense in $C_0(X)$ with respect to the uniform norm. $\mathcal{H}$ is called a \emph{core} for $\mathcal{D}(\E)$.

We say that the Dirichlet form $(\E,\mathcal{D}(\E))$ is \emph{local} if for all $u,v\in\mathcal{D}(\E)$ with disjoint compact support $\E(u,v)=0$. $\E$ is said \emph{strongly local} if $u,v\in \mathcal{D}(\E)$ with compact support and $v$ constant on a neighborhood of $\supp u$ implies $\E(u,v)=0$.
\\

Let $\theta:\R^d\to\R$ be a non-negative function such that $\theta^{-1},\theta$ are locally integrable on $\R^d$.
Consider the symmetric form $\E$ on $L^2(\R^d,\theta dx)$ with domain $C_0^\infty(\R^d)$ defined by
\begin{equation}\label{eq:DF}
\E(u,v)\defeq\sum_{i,j}\int_{\R^d} a_{ij}(x)\partial_i u(x)\partial_j v(x)\,dx.
\end{equation}
Then, $(\E,C_0^\infty(\R^d))$ is  \emph{closable} in $L^2(\R^d,\theta dx)$ thanks to \cite{rockner}[Ch. II example 3b], since $\lambda^{-1},\Lambda\in L^1_{loc}(\R^d)$ by \ref{ass:b.2}. We shall denote by $(\E,\F^\theta)$ such a closure; it is clear that $\F^\theta$ is the completion of $C_0^\infty(\R^d)$ in $L^2(\R^d,\theta dx)$ with respect to $\E_1$. If $u\in \F^\theta$, then $u$ is weakly differentiable with derivatives in $L^1_{loc}(\R^d)$ and
$\E(u,u)$ takes the form \eqref{eq:DF} with $\partial_i u$, $i=1,...,d$
being the weak derivative of $u$ in direction $i$. Observe that $(\E,\F^\theta)$ is a strongly local regular Dirichlet form, having $C_0^\infty(\R^d)$ as a core. In the case that $\theta\equiv 1$ we will simply write $\F$.

The Dirichlet forms theory \cite[Theorem 7.2.2]{fukushima1994dirichlet} allows to construct a diffusion process $\mathbf{M}^{\theta}\defeq(X_t^\theta,\PR^\theta_x,\zeta^\theta)$, associated to $(\E,\F^\theta)$,  starting from all points outside a properly exceptional set. Since we shall work with random media, the set of exceptional points may depend on the particular realization of the environment. In Section \ref{sec:mindiff} we shall construct a diffusion process starting for all $x\in\R^d$ at the price of local boundedness of the coefficients.

Fix a ball $B\subset\R^d$ and consider $\E$ as defined in \eqref{eq:DF} but on $L^2(B,\theta dx)$, and with domain $C_0^\infty(B)$, then clearly $(\E,C_0^\infty(B))$ is  closable in $L^2(B,\theta dx)$. We denote by $(\E,\F^\theta_B)$ the closure, which also in this case is a strongly local regular Dirichlet form.
\subsection{Sobolev's inequalities}

Let us introduce some notation. Let $B\subset \R^d$ be an open bounded set. For a function $u:\R^d\to\R$ and for $r\geq 1$ we note
\[\|u\|_{r} \defeq \biggl(\int_{\R^d} |u(x)|^r dx\biggr)^\frac{1}{r},\quad
\|u\|_{r,\Lambda} \defeq \biggl(\int_{\R^d} |u(x)|^r\Lambda(x)dx\biggr)^\frac{1}{r},\quad\|u\|_{B,r} \defeq \biggl(\frac{1}{|B|}\int_B |u(x)|^r\,dx\biggr)^\frac{1}{r}.
\]

In the next proposition it is enough to assume the local integrability of $\Lambda$ and the $q$-local integrability of $\lambda^{-1}$.

\begin{proposition}[local Sobolev inequality] Fix a ball $B\subset\R^d$. Then there exists a constant $C_{sob}>0$, depending only on the dimension $d\geq2$, such that for all $u\in\F_B$
\begin{equation}\label{eq:localsobolev}
\|u\|_\rho^2\leq C_{sob}\|1_B \lambda^{-1}\|_q\, \E(u,u).
\end{equation}
\end{proposition}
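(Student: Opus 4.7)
The plan is to reduce the weighted Sobolev inequality to the classical (unweighted) Sobolev embedding on $\R^d$, using H\"older's inequality to absorb the weight $\lambda$. The natural exponent to introduce is $s \defeq 2q/(q+1)$, because its classical Sobolev conjugate $sd/(d-s)$ coincides with $\rho = 2qd/(q(d-2)+d)$, as a direct computation shows. Since $s < 2 \leq d$, the embedding $W^{1,s}_0(B) \hookrightarrow L^{\rho}(\R^d)$ is available, and yields
\[
\|u\|_\rho \,\leq\, C(d)\,\|\nabla u\|_s
\]
for $u\in C_0^\infty(B)$, hence for all $u\in\F_B$ by density (recall that $\F_B$ is the closure of $C_0^\infty(B)$ under $\E_1$, and elements of $\F_B$ are weakly differentiable).

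Next I would control $\|\nabla u\|_s$ by the Dirichlet form. The trick is to split $|\nabla u|^s = (\lambda^{s/2}|\nabla u|^s)\cdot \lambda^{-s/2}$ on $B$ and apply H\"older with conjugate exponents $2/s$ and $2/(2-s)$. The first factor integrates to $\int_B\lambda|\nabla u|^2 dx$ raised to $s/2$; the second factor produces $\int_B \lambda^{-s/(2-s)}dx$ raised to $(2-s)/2$. The choice $s=2q/(q+1)$ is precisely what makes $s/(2-s)=q$, so the weight exponent matches the assumed integrability. Bookkeeping the arithmetic $(2-s)/2 = 1/(q+1)$ and $s(q+1)=2q$, after taking $1/s$-th powers one obtains
\[
\|\nabla u\|_s \,\leq\, \Bigl(\int_B \lambda\,|\nabla u|^2 dx\Bigr)^{1/2}\|1_B\lambda^{-1}\|_q^{1/2}.
\]

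Finally, I would use the lower ellipticity bound in $(b.1)$, namely $\lambda(x)|\nabla u(x)|^2 \leq \langle a(x)\nabla u(x),\nabla u(x)\rangle$, to dominate $\int_B\lambda|\nabla u|^2 dx \leq \E(u,u)$. Squaring the chain $\|u\|_\rho \leq C(d)\|\nabla u\|_s$ and substituting gives
\[
\|u\|_\rho^2 \,\leq\, C(d)^2\,\|1_B\lambda^{-1}\|_q\,\E(u,u),
\]
so we may take $C_{sob} \defeq C(d)^2$.

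There is no real obstacle: the whole argument is bookkeeping plus the standard Sobolev embedding. The only point deserving a moment of care is to check that $s\in[1,2)$ (true for every $q\in[1,\infty]$) so the classical Sobolev inequality applies, and to verify the exponent identities $s/(2-s)=q$ and $sd/(d-s)=\rho(q,d)$ so that everything lines up with the statement. The assumption on $p$ plays no role here; only the $q$-integrability of $\lambda^{-1}$ on $B$ enters, which is why the proposition holds at this generality.
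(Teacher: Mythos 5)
Your proof is correct and is essentially the same as the paper's: both reduce to the classical Sobolev embedding $W^{1,2q/(q+1)}_0(B)\hookrightarrow L^\rho$ and then apply H\"older's inequality with exponents $(q+1)/q$ and $q+1$ to absorb the weight $\lambda$ into the Dirichlet form, using the lower ellipticity bound in \ref{ass:b.1}. The only cosmetic difference is that the paper squares before applying H\"older, whereas you square at the end; the exponent bookkeeping is identical.
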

\begin{proof}  We start proving \eqref{eq:localsobolev} for $u\in C_0^\infty(B)$. Since $\rho$ as defined in \eqref{eq:rho} is the Sobolev conjugate of $2q/(q+1)$ in $\R^d$, by the classical Sobolev's inequality there exists $C_{sob}>0$ depending only on $d$ such that
\[
\|u\|_\rho\leq C_{sob} \|\nabla u\|_{2q/(q+1)},
\]
where it is clear that we are integrating over $B$. By H\"older's inequality and \ref{ass:b.1} we can estimate the right hand side as follows
\[
\|\nabla u\|_{2q/(q+1)}^2=\Bigl(\int_B|\nabla u|^\frac{2q}{q+1}\lambda^\frac{q}{q+1}\lambda^{-\frac{q}{q+1}}\,dx\Bigr)^\frac{q+1}{q}\leq \|1_B\lambda^{-1}\|_{q}\,\E(u,u),
\]
which leads to \eqref{eq:localsobolev} for $u\in C_0^\infty(B)$. By approximation, the inequality is easily extended to $u\in\F_B$.
\end{proof}

\begin{proposition}[local weighted Sobolev inequality] Fix a ball $B\subset\R^d$. Then there exists a constant $C_{sob}>0$, depending only on the dimension $d\geq2$, such that for all $u\in\F_B^\Lambda$
\begin{equation}\label{eq:localweightedsobolev}
\|u\|_{\rho/p^*,\Lambda}^2\leq C_{sob}\|1_B \lambda^{-1}\|_q\|1_B\Lambda\|_p^{2p^*/\rho}\, \E(u,u).
\end{equation}
\end{proposition}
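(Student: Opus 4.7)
The plan is to reduce the weighted $\rho/p^*$-norm on the left-hand side to the unweighted $\rho$-norm by means of H\"older's inequality, and then apply the local Sobolev inequality \eqref{eq:localsobolev} that was just established. The exponent $\rho/p^*$ is tailor-made for a H\"older split against $\Lambda^p$, and the power $2p^*/\rho$ that appears on $\|1_B\Lambda\|_p$ is precisely what one obtains after raising the resulting bound to the correct power.

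First, I would prove the inequality for $u \in C_0^\infty(B)$. Applying H\"older's inequality with conjugate exponents $p$ and $p^* = p/(p-1)$ to the integrand $|u|^{\rho/p^*}\Lambda = |u|^{\rho/p^*}\cdot \Lambda$, one has
\[
\int_B |u|^{\rho/p^*}\Lambda\,dx \;\leq\; \Bigl(\int_B \Lambda^p\,dx\Bigr)^{1/p}\Bigl(\int_B |u|^{\rho/p^*\cdot p^*}\,dx\Bigr)^{1/p^*} \;=\; \|1_B \Lambda\|_p \cdot \|u\|_{\rho}^{\rho/p^*}.
\]
Note that because of Remark \ref{rem:embedding} we have $\rho > 2 p^*$, so $\rho/p^* > 2 \geq 1$ and the left-hand side is a genuine norm to the power $\rho/p^*$. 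Thus
\[
\|u\|_{\rho/p^*,\Lambda}^{\rho/p^*} \;\leq\; \|1_B \Lambda\|_p \cdot \|u\|_{\rho}^{\rho/p^*}.
\]

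Next I would raise this estimate to the power $2 p^*/\rho$ to obtain
\[
\|u\|_{\rho/p^*,\Lambda}^{2} \;\leq\; \|1_B \Lambda\|_p^{2 p^*/\rho} \cdot \|u\|_{\rho}^{2},
\]
and then invoke the local Sobolev inequality \eqref{eq:localsobolev} to control the right-hand factor:
\[
\|u\|_{\rho}^{2} \;\leq\; C_{sob}\,\|1_B \lambda^{-1}\|_q\,\E(u,u).
\]
Combining the two bounds yields \eqref{eq:localweightedsobolev} on $C_0^\infty(B)$.

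Finally, I would extend the estimate from $C_0^\infty(B)$ to $\F_B^{\Lambda}$ by the usual density argument: if $u_n \in C_0^\infty(B)$ converges to $u$ in the norm $\E_1$ associated with $L^2(B,\Lambda dx)$, then $\E(u_n-u_m,u_n-u_m)\to 0$, so by the inequality applied to $u_n-u_m$ the sequence $\{u_n\}$ is Cauchy in $L^{\rho/p^*}(B,\Lambda dx)$, and its limit must coincide $\Lambda dx$-a.e.\ with $u$; passing to the limit in the inequality gives the claim. There is no genuine obstacle in this proof: the only point requiring minor care is that the quantities $\|1_B\Lambda\|_p$ and $\|1_B\lambda^{-1}\|_q$ are finite thanks to \ref{ass:b.2}, and that the exponent relation $\rho > 2 p^*$ (equivalent to $1/p + 1/q < 2/d$) is precisely what makes both the H\"older step and the Sobolev embedding in Remark \ref{rem:embedding} meaningful.
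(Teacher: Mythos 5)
Your proof is correct and follows exactly the paper's argument: an application of H\"older's inequality with exponents $p$ and $p^*$ to pass from the weighted $\rho/p^*$-norm to the unweighted $\rho$-norm, followed by the local Sobolev inequality \eqref{eq:localsobolev} and a density argument. The only difference is that you spell out the H\"older computation and the limit passage, which the paper leaves implicit.
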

\begin{proof} The proof easily follows from H\"older's inequality
\[
\|u\|_{\rho/p^*,\Lambda}^2\leq \|u\|_\rho^2 \|1_B\Lambda\|_p^{2p^*/\rho}
\]
and the previous proposition.
\end{proof}

\begin{remark}
From these two Sobolev's inequalities it follows that the domains $\F_B$ and $\F_B^\Lambda$ coincide for all balls $B\subset \R^d$. Indeed, from \eqref{eq:localsobolev} and \eqref{eq:localweightedsobolev}, since $\rho,\rho/p^*>2$, we get that $(\F_B,\E)$ and $(\F_B^\Lambda,\E)$ are two Hilbert spaces; therefore $\F_B,\F_B^\Lambda$ coincide with their extended Dirichlet space which by \cite[page 324]{Fukushima1987} is the same, hence $\F_B = \F_B^\Lambda$.
\end{remark}

\paragraph{Cutoffs}Since we want to get apriori estimates for solutions to elliptic partial differential equations in the spirit of the classical theory, we will need to work with functions that are locally in $\F$ or $\F^\Lambda$ and with cutoffs.

Let $B\subset\R^d$ be a ball, a cutoff on $B$ is a function $\eta\in C_0^\infty(B)$, such that $0\leq \eta\leq 1$.
Given $\theta:\R^d\to\R$ as before, we say that $u\in\F_{loc}^\theta$, if for all balls $B\subset\R^d$ there exists $u_B\in\F^\theta$ such that $u = u_B$ almost surely on $B$.

In view of these notations, for $u,v\in \F^\theta_{loc}$ we define the bilinear form
\begin{equation}\label{eq:dfcutoff}
\E_\eta(u,v)=\sum_{i,j}\int_{\R^d} a_{ij}(x)\partial_i u(x)\partial_j v(x)\eta^2(x)\,dx.
\end{equation}

\begin{lemma}\label{lem:cutoff} Let $B\subset \R^d$ and consider a cutoff $\eta\in C_0^\infty(B)$ as above. Then, $u\in\F_{loc}\cup \F^\Lambda_{loc}$ implies $\eta u\in \F_B$.
\end{lemma}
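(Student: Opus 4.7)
The strategy is to exhibit an approximating sequence in $C_0^\infty(B)$ converging to $\eta u$ in the Hilbert norm $\E_1$ on $L^2(B,dx)$. Since $\eta$ has compact support inside $B$, I first pick a ball $B'$ containing $\supp\eta$ and, by the definition of $\F_{loc}$ (respectively $\F^\Lambda_{loc}$), a global representative $u_{B'}\in\F$ (respectively $\F^\Lambda$) equal to $u$ almost everywhere on $B'$; then $\eta u=\eta u_{B'}$, so without loss of generality we may assume $u$ is globally in $\F$ or in $\F^\Lambda$.

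Next I choose $v_n\in C_0^\infty(\R^d)$ converging to $u$ in the $\E_1$-norm of the relevant ambient space and set $w_n\defeq \eta v_n\in C_0^\infty(B)\subset\F_B$. The goal becomes to show $w_n\to\eta u$ in the $\E_1$-norm of $L^2(B,dx)$. The Leibniz rule $\nabla(\eta f)=\eta\nabla f+f\nabla\eta$ combined with \ref{ass:b.1} yields the pointwise bound $\langle a\nabla(\eta f),\nabla(\eta f)\rangle\leq 2\eta^2\langle a\nabla f,\nabla f\rangle+2\Lambda|\nabla\eta|^2 f^2$, so setting $f_n\defeq v_n-u$ we obtain
\[
\E(\eta f_n,\eta f_n)\;\leq\;2\|\eta\|_\infty^2\,\E(f_n,f_n)\;+\;2\|\nabla\eta\|_\infty^2\int_{\supp\eta}\Lambda\, f_n^2\,dx.
\]
The first summand vanishes in the limit by the choice of $v_n$, and by H\"older's inequality the second is bounded by $2\|\nabla\eta\|_\infty^2\,\|1_B\Lambda\|_p\,\|f_n\|_{L^{2p^*}(K)}^{\,2}$, with $K\defeq \supp\eta$ and $p^*\defeq p/(p-1)$.

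The crucial point is therefore to prove $\|f_n\|_{L^{2p^*}(K)}\to 0$; this is also the main obstacle, since we have no uniform boundedness of $\Lambda$ to exploit. My plan is first to gain gradient control in $L^{2q/(q+1)}(K)$ by H\"older as in Remark~\ref{rem:embedding}, so that $\|\nabla f_n\|_{L^{2q/(q+1)}(K)}^{\,2}\leq\|1_K\lambda^{-1}\|_q\,\E(f_n,f_n)\to 0$. Pairing this with $L^{2q/(q+1)}(K)$-convergence of $f_n$ itself---immediate in the $\F$-case from $L^2(dx)$-convergence, and in the $\F^\Lambda$-case obtained via one additional H\"older estimate using $\Lambda^{-q}\leq\lambda^{-q}\in L^1_{loc}$ (which holds since $\Lambda\geq\lambda>0$ a.e.)---gives $f_n\to 0$ in $W^{1,2q/(q+1)}(K)$. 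The continuous Sobolev embedding $W^{1,2q/(q+1)}(K)\hookrightarrow L^\rho(K)$, together with $\rho>2p^*$ (which is exactly the condition $1/p+1/q<2/d$, cf.\ Remark~\ref{rem:embedding}), then delivers $f_n\to 0$ in $L^{2p^*}(K)$. The remaining $L^2(B,dx)$-convergence $w_n\to\eta u$ is immediate in the $\F$-case from the boundedness of $\eta$, and in the $\F^\Lambda$-case follows from the same Sobolev step via $L^{2p^*}(K)\hookrightarrow L^2(K)$. Combining everything, $w_n\to\eta u$ in $\E_1$, so $\eta u\in\F_B$.
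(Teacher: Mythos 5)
Your proof is correct, and the core ingredients are the same as the paper's: the Leibniz rule plus \ref{ass:b.1} to bound $\E(\eta f_n,\eta f_n)$ by $\E(f_n,f_n)$ plus an $L^2(\Lambda dx)$ term, and (for the $\F_{loc}$ case) the Sobolev embedding $W^{1,2q/(q+1)}(K)\hookrightarrow L^{2p^*}(K)$ to upgrade control of the gradient to control of the zero-order term. The one genuine structural difference is the choice of ambient Hilbert space: the paper shows $\eta u\in\F^\Lambda_B$ by proving the approximants are Cauchy in the $\E_1$-norm on $L^2(B,\Lambda dx)$, and then invokes the previously established identity $\F^\Lambda_B=\F_B$ to conclude. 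You instead work directly in $L^2(B,dx)$ throughout, which spares you the domain-coincidence remark but costs you the extra step of converting $L^2(\Lambda dx)$-convergence into $L^2(dx)$-convergence via the same Sobolev chain $W^{1,2q/(q+1)}(K)\to L^{2p^*}(K)\hookrightarrow L^2(K)$ (and, in the $\F^\Lambda$-case, the H\"older estimate with $\Lambda^{-q}\leq\lambda^{-q}$). Both routes are sound; yours is slightly more self-contained, while the paper's is slightly shorter once the domain identity is in place. One small stylistic remark: rather than subtracting a general $\F$-element and proving convergence to $\eta u$, it is cleaner (and what the paper does) to show the sequence $\eta v_n$ is Cauchy in the $\E_1$-norm and then identify its limit with $\eta u$ via the already-known $L^2$-convergence; that avoids having to justify that $\E(\eta u,\eta u)$ is a priori well-defined.
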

\begin{proof} Take $u\in\F_{loc}^\Lambda$, then there exists $\bar{u}\in\F^\Lambda$ such that $u=\bar{u}$ on $2B$. Let $\{f_n\}_\N\subset C_0^\infty(\R^d)$ be such that $f_n\to \bar{u}$ with respect to $\E+\langle\cdot,\cdot\rangle_\Lambda$. Clearly $\eta f_n\in\F_B^\Lambda$ and $\eta f_n\to \eta \bar{u}=\eta u$ in $L^2(B,\Lambda dx)$. Moreover
	\[
	\E(\eta f_n-\eta f_m)\leq2\E(f_n-f_m)+\|\nabla\eta\|_\infty^2\int_B|f_n-f_m|^2\Lambda dx.
	\]
	Hence $\eta f_n$ is Cauchy in $L^2(B,\Lambda dx)$ with respect to $\E+\langle\cdot,\cdot\rangle_\Lambda$, which implies that $\eta u\in \F^\Lambda_B = \F_B$. If $u\in\F_{loc}$ the proof is similar, and one has only to observe that $\{f_n\}$ is Cauchy in $W^{2q/(q+1)}(B)$, which by Sobolev's embedding theorem implies that $\{f_n\}$ is Cauchy in $L^2(B,\Lambda dx)$.
\end{proof}

\begin{proposition}[local Sobolev inequality with cutoff] Fix a ball $B\subset\R^d$ and a cutoff function $\eta\in C_0^\infty (B)$ as above. Then there exists a constant $C_{sob}>0$, depending only on the dimension $d\geq2$, such that for all $u\in\F_{loc}^\Lambda\cup\F_{loc}$
\begin{equation}\label{eq:localsobolev-cutoff}
\|\eta u\|_{\rho}^2\leq 2 C_{sob}\|1_B \lambda^{-1}\|_q \Bigl[\E_\eta(u,u)+\|\nabla \eta\|_\infty^2\|1_B u\|^2_{2,\Lambda}\Bigr],
\end{equation}
and
\begin{equation}\label{eq:localweightedsobolev-cutoff}
\|\eta u\|_{\rho/p^*,\Lambda}^2\leq 2 C_{sob}\|1_B \lambda^{-1}\|_q \|1_B\Lambda\|_p^{2p^*/\rho} \Bigl[\E_\eta(u,u)+\|\nabla \eta\|_\infty^2\|1_B u\|^2_{2,\Lambda}\Bigr].
\end{equation}
\end{proposition}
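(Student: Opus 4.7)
The plan is to reduce the cutoff versions to the plain Sobolev inequalities \eqref{eq:localsobolev} and \eqref{eq:localweightedsobolev} applied to the function $\eta u$, after using Lemma \ref{lem:cutoff} to guarantee $\eta u \in \F_B$. So the first step is: take $u \in \F_{loc}^\Lambda \cup \F_{loc}$, fix the cutoff $\eta \in C_0^\infty(B)$, and invoke Lemma \ref{lem:cutoff} to conclude $\eta u \in \F_B$ (which, by the remark following the weighted Sobolev inequality, coincides with $\F_B^\Lambda$). This legitimizes both
\[
\|\eta u\|_\rho^2 \leq C_{sob}\|1_B \lambda^{-1}\|_q \, \E(\eta u,\eta u)
\quad\text{and}\quad
\|\eta u\|_{\rho/p^*,\Lambda}^2 \leq C_{sob}\|1_B \lambda^{-1}\|_q \|1_B \Lambda\|_p^{2p^*/\rho}\, \E(\eta u,\eta u).
\]

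The second step is a Leibniz-rule expansion of $\E(\eta u,\eta u)$. Using $\partial_i(\eta u) = \eta \partial_i u + u \partial_i \eta$ and the elementary inequality $(x+y)^2 \leq 2x^2 + 2y^2$ for the symmetric bilinear form with non-negative kernel $a(x)$, I get
\[
\E(\eta u,\eta u) \leq 2 \sum_{i,j}\int a_{ij}\,\eta^2\,\partial_i u\,\partial_j u\, dx + 2 \sum_{i,j}\int a_{ij}\, u^2\, \partial_i \eta\, \partial_j \eta\, dx.
\]
The first term is exactly $2\E_\eta(u,u)$. For the second, the upper bound in \ref{ass:b.1} gives $\langle a \nabla\eta,\nabla\eta\rangle \leq \Lambda |\nabla\eta|^2 \leq \|\nabla\eta\|_\infty^2\, \Lambda$ pointwise, and since $\eta$ is supported in $B$,
\[
2\sum_{i,j}\int a_{ij}\, u^2\, \partial_i\eta\,\partial_j\eta\, dx \leq 2\|\nabla\eta\|_\infty^2 \int_B u^2 \Lambda\, dx = 2\|\nabla\eta\|_\infty^2\, \|1_B u\|_{2,\Lambda}^2.
\]
Combining these two estimates yields $\E(\eta u,\eta u) \leq 2\bigl[\E_\eta(u,u) + \|\nabla\eta\|_\infty^2 \|1_B u\|_{2,\Lambda}^2\bigr]$.

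Plugging this into the two Sobolev inequalities from the first step immediately gives \eqref{eq:localsobolev-cutoff} and \eqref{eq:localweightedsobolev-cutoff}, with the factor of $2$ absorbed on the right. There is no real obstacle here beyond the mild subtlety that $u$ need only belong to the local spaces, which is exactly what Lemma \ref{lem:cutoff} is designed to handle; the cleanest presentation first treats $u \in \F_{loc}^\Lambda$, for which the bound $\|1_B u\|_{2,\Lambda}$ is manifestly finite, and then remarks that for $u \in \F_{loc}$ the same argument applies because Sobolev's embedding (as in Remark \ref{rem:embedding}) gives local $L^2$-control in the weight $\Lambda$.
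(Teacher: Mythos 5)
Your proof is correct and follows essentially the same route as the paper: invoke Lemma \ref{lem:cutoff} to get $\eta u\in\F_B$, apply the plain Sobolev inequalities to $\eta u$, then expand $\E(\eta u,\eta u)$ via the Leibniz rule and the bound $(x+y)^2\leq 2x^2+2y^2$ together with $\langle a\nabla\eta,\nabla\eta\rangle\leq\Lambda\|\nabla\eta\|_\infty^2$. The paper only spells out the first inequality and notes the second is analogous; you treat both, which is fine.
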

\begin{proof}
We prove only $\eqref{eq:localsobolev-cutoff}$, being \eqref{eq:localweightedsobolev-cutoff} analogous. Take $u\in \F_{loc}\cup\,\F^\Lambda_{loc}$, by Lemma \ref{lem:cutoff}, $\eta u\in\F_B$, therefore we can apply \eqref{eq:localsobolev} and get
\[
\|\eta u\|_{\rho}^2\leq C_{sob}\|1_B \lambda^{-1}\|_q\, \E(\eta u,\eta u).
\]
To get \eqref{eq:localsobolev-cutoff} we compute $\nabla(\eta u) = u \nabla \eta+\eta\nabla u$ and we easily estimate
\begin{align*}
\E(\eta u,\eta u) &= \int_{\R^d} \langle a \nabla(\eta u),\nabla(\eta u)\rangle dx\\&\leq 2\int_{\R^d} \langle a \nabla u,\nabla u\rangle \eta^2  dx + 2\int_{\R^d} \langle a \nabla \eta,\nabla \eta\rangle |u|^2  dx
\\&\leq2\E_\eta(u,u)+2\|\nabla\eta\|_\infty^2\|1_B u\|_{2,\Lambda}^2.
\end{align*}
\end{proof}
\subsection{Maximal inequality for Poisson's equation}

Let $f:\R^d\to\R$ be some function with essentially bounded  weak derivatives. We say that $u\in\F_{loc}$ is a  solution (subsolution or supersolution) of the Poisson equation, if
\begin{equation}\label{eq:Poisson}
 \E(u,\phi)= -\int_{\R^d} \langle a \nabla f, \nabla\phi \rangle dx\quad(\leq\mbox{ or }\geq)
\end{equation}
for all $\phi\in C_0^\infty(\R^d)$, $\phi\geq 0$. For a ball $B\subset\R^d$, we say that $u\in\F_{loc}$ is a solution (subsolution or supersolution) of the Poisson equation in $B$ if $\eqref{eq:Poisson}$ is satisfied for all  $\phi\in \F_B$, $\phi\geq 0$.

Given a positive subsolution $u\in\F_{loc}$ of \eqref{eq:Poisson}, we would like to test for $\phi=u^{2\alpha-1} \eta^2$ with $\alpha>1$ and $\eta$ a cutoff function in $B$. The aim is to get a priori estimates for $u$. One must be careful with powers of the function $u$. Indeed, in general $u^{2\alpha-1}$ is not a weakly differentiable function, and therefore it is not clear that $\phi\in \F$. The following Lemma is needed to address such a problem
\begin{lemma}\label{lemma:moser}Let $G:(0,\infty)\to (0,\infty)$ be a Lipschitz function with Lipschitz constant $L_G>0$. Assume also that $G(0+)=0$.
	Take $u\in \F$, $u\geq\epsilon$, for some $\epsilon>0$ then $G(u)\in \F$.
\end{lemma}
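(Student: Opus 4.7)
The plan is to reduce this to the classical fact that normal contractions act on any Dirichlet space. I would proceed in three short steps.

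First, extend $G$ from $(0,\infty)$ to a function $\tilde G:\R\to\R$ by setting $\tilde G(x)=G(x)$ for $x>0$ and $\tilde G(x)=0$ for $x\leq 0$. The hypothesis $G(0+)=0$ makes $\tilde G$ continuous at $0$, and a short case-by-case inspection shows that $\tilde G$ is Lipschitz on all of $\R$ with the same constant $L_G$: the only non-trivial case is $x>0\geq y$, where
\[
|\tilde G(x)-\tilde G(y)|=|G(x)|=|G(x)-G(0+)|\leq L_G\,x\leq L_G|x-y|.
\]
In particular $\tilde G(0)=0$, so $\tilde G/L_G$ is a normal contraction on $\R$.

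Second, since $(\E,\F)$ is a (regular) Dirichlet form on $L^2(\R^d,dx)$, the standard Markov property—derived from the unit contraction property used to define Dirichlet forms earlier in the paper—implies that every normal contraction $F$ operates on $\F$: whenever $v\in\F$, one has $F(v)\in\F$ together with $\E(F(v),F(v))\leq\E(v,v)$ (cf.\ \cite[Thm.~1.4.1]{fukushima1994dirichlet}). Applying this to $v=u$ with $F=\tilde G/L_G$ yields $\tilde G(u)\in\F$.

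Finally, the lower bound $u\geq\epsilon>0$ a.e.\ guarantees that $\tilde G(u)=G(u)$ a.e., and so $G(u)\in\F$. There is no real obstacle here: the condition $G(0+)=0$ is the precise ingredient that allows the Lipschitz extension in the first step, and once the extension is available the rest is a black-box invocation of the operation of normal contractions on a Dirichlet form.
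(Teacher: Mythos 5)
Your proposal is correct and is essentially the paper's own argument, just written out in detail. The paper's proof is a single sentence: it observes that $G(u)/L_G$ is a normal contraction of $u$ (in Fukushima's relational sense, $|v|\le|u|$ and $|v(x)-v(y)|\le|u(x)-u(y)|$) and invokes the standard fact that normal contractions preserve $\F$; you instead first extend $G$ to a $1$-Lipschitz map $\tilde G/L_G:\R\to\R$ vanishing at $0$ and then apply the operational form of the same theorem, finally identifying $\tilde G(u)=G(u)$ from $u\ge\epsilon$. Both routes are licensed by \cite[Thm.~1.4.1]{fukushima1994dirichlet}, and the hypothesis $G(0+)=0$ plays exactly the role you isolate.
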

\begin{proof} The result follows observing that $G(u)/L_G$ is a normal contraction of $u\in \F$, and by standard Dirichlet form theory, see \cite[Ch. 1]{fukushima1994dirichlet} for details.
\end{proof}

\begin{proposition} Let $u\in \F_{loc}$ be a subsolution of \eqref{eq:Poisson} in $B$. Let $\eta\in C_0^\infty(B)$ be a cutoff function, $0\leq\eta\leq1$. Then there exists a constant $C_1>0$ such that for all $\alpha\geq 1$
\begin{equation}\label{eq:meanvalue}
\|\eta u^+\|_{B,\alpha\rho}^{2\alpha} \leq \alpha^2 C_1\|\lambda^{-1}\|_{B,q}\|\Lambda\|_{B,p}|B|^\frac{2}{d}\Bigl[\|\nabla\eta\|^2_\infty\| u^+\|^{2\alpha}_{B,2\alpha p_*}+\|\nabla f\|^2_\infty \|u^+\|^{2\alpha-2}_{B,2\alpha p_*}\Bigr].
\end{equation}
\end{proposition}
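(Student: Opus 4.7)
The plan is the standard Moser--Caccioppoli argument coupled with the cutoff Sobolev inequality \eqref{eq:localsobolev-cutoff}. Writing $v \defeq u^+$, the strategy is to test the subsolution inequality against $\phi \defeq v^{2\alpha-1}\eta^2$, extract a Caccioppoli-type bound for $\E_\eta(v^\alpha,v^\alpha)$, and then plug it into \eqref{eq:localsobolev-cutoff} applied to $v^\alpha$. Before this, one has to justify that $\phi \in \F_B$: since $s\mapsto s^{2\alpha-1}$ is not Lipschitz on $[0,\infty)$, I would truncate, replacing $v$ by $v_{M,\varepsilon} \defeq ((v\wedge M)\vee\varepsilon) - \varepsilon$, apply Lemma \ref{lemma:moser} to a suitable Lipschitz truncation of $s\mapsto s^{2\alpha-1}$, and then pass $M\to\infty$, $\varepsilon\to 0$ by monotone/dominated convergence.

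With $\phi$ admissible, the weak chain rule gives
\[
\nabla \phi = (2\alpha-1) v^{2\alpha-2}\eta^2\,\nabla v + 2 v^{2\alpha-1}\eta\,\nabla \eta,
\]
and the subsolution inequality $\E(u,\phi) \leq -\int\langle a\nabla f,\nabla\phi\rangle dx$ unwinds to
\[
(2\alpha-1)\int \langle a\nabla v,\nabla v\rangle v^{2\alpha-2}\eta^2\,dx \leq -2\int\langle a\nabla v, \nabla\eta\rangle v^{2\alpha-1}\eta\,dx - (2\alpha-1)\int\langle a\nabla f,\nabla v\rangle v^{2\alpha-2}\eta^2\,dx - 2\int\langle a\nabla f,\nabla\eta\rangle v^{2\alpha-1}\eta\,dx.
\]
Applying the pointwise $a$-Cauchy--Schwarz $|\langle aX,Y\rangle| \leq \langle aX,X\rangle^{1/2}\langle aY,Y\rangle^{1/2}$ and Young's inequality with small parameters on each of the three right-hand terms absorbs every occurrence of $\langle a\nabla v,\nabla v\rangle v^{2\alpha-2}\eta^2$ back into the principal term on the left. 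Bounding $\langle a\nabla\eta,\nabla\eta\rangle \leq \Lambda\|\nabla\eta\|_\infty^2$ and $\langle a\nabla f,\nabla f\rangle\leq \Lambda\|\nabla f\|_\infty^2$, using $\eta\leq 1$ to drop $\eta^2$ on the last integral, and multiplying by $\alpha^2/(2\alpha-1) \leq \alpha$ (valid for $\alpha\geq1$) yields the Caccioppoli estimate
\[
\E_\eta(v^\alpha,v^\alpha) = \alpha^2\int\langle a\nabla v,\nabla v\rangle v^{2\alpha-2}\eta^2\,dx \leq C\alpha^2\Bigl[\|\nabla\eta\|_\infty^2\int_B \Lambda v^{2\alpha}\,dx + \|\nabla f\|_\infty^2\int_B \Lambda v^{2\alpha-2}\,dx\Bigr].
\]

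Feeding this bound into \eqref{eq:localsobolev-cutoff} applied to $v^\alpha$ and using $C\alpha^2+1 \leq (C+1)\alpha^2$ gives
\[
\|\eta v^\alpha\|_\rho^2 \leq C'\alpha^2\|1_B\lambda^{-1}\|_q\Bigl[\|\nabla\eta\|_\infty^2\int_B \Lambda v^{2\alpha}\,dx + \|\nabla f\|_\infty^2\int_B \Lambda v^{2\alpha-2}\,dx\Bigr].
\]
Hölder's inequality yields $\int_B \Lambda v^s dx \leq |B|\,\|\Lambda\|_{B,p}\|v\|_{B,sp^*}^s$ for $s\in\{2\alpha-2,2\alpha\}$, and the monotonicity $\|v\|_{B,(2\alpha-2)p^*} \leq \|v\|_{B,2\alpha p^*}$ of averaged $L^r$-norms on the probability space $(B,dx/|B|)$ lets me replace the exponent in the $s=2\alpha-2$ term. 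Since $\eta\leq 1$, pointwise $\eta v^\alpha \geq (\eta v)^\alpha$, hence
\[
\|\eta v^\alpha\|_\rho^2 \geq \|\eta v\|_{\alpha\rho}^{2\alpha} = |B|^{2/\rho}\|\eta v\|_{B,\alpha\rho}^{2\alpha}.
\]
Collecting the $|B|$-exponents reduces to the single identity $1+\tfrac{1}{q}-\tfrac{2}{\rho} = \tfrac{2}{d}$, which is immediate from \eqref{eq:rho}, and \eqref{eq:meanvalue} follows.

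The main obstacle is the bookkeeping in the Caccioppoli step: the Young's parameters have to be chosen so that the portion absorbed into the principal term stays strictly less than $(2\alpha-1)\int\langle a\nabla v,\nabla v\rangle v^{2\alpha-2}\eta^2 dx$ uniformly in $\alpha\geq 1$, while simultaneously keeping the total $\alpha$-dependence of the constant no worse than $\alpha^2$. The truncation argument justifying use of the test function $v^{2\alpha-1}\eta^2$ is standard but, as in classical weighted Moser theory, requires care because the truncated Lipschitz approximations must be controlled in both $\F$ and $\F^\Lambda$ norms.
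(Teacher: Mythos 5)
Your proof is correct and takes essentially the same approach as the paper: test the subsolution inequality against a cutoff times a power of $u^+$, derive a Caccioppoli bound, feed it into the cutoff Sobolev inequality \eqref{eq:localsobolev-cutoff}, and bring in $\Lambda$ via H\"older. The one structural difference is that you absorb the mixed term $\int |\langle a\nabla f,\nabla \eta\rangle| v^{2\alpha-1}\eta$ already at the Caccioppoli stage by Young's inequality, whereas the paper carries the corresponding product $\|\nabla\eta\|_\infty\|\nabla f\|_\infty\|G_\epsilon(u)1_B\|_{1,\Lambda}$ all the way to the end before observing that it is dominated by the two squared terms; both choices give the same $\alpha^2$-dependence. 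You also sidestep the paper's $G,H$-calculus by going straight for the power $v^{2\alpha-1}\eta^2$ and invoking a truncation argument. That is acceptable, but note that as you wrote it the truncation does not quite match the hypotheses of Lemma~\ref{lemma:moser}, which asks for $u\geq\epsilon$: your $v_{M,\varepsilon}=((v\wedge M)\vee\varepsilon)-\varepsilon$ is nonnegative rather than bounded below by $\varepsilon$. The fix is the one the paper uses implicitly: apply the Lipschitz composition to $(v\wedge M)\vee\varepsilon\geq\varepsilon$ and subtract the constant $G(\varepsilon)$ afterwards (this is precisely what $G_\epsilon(x)=G(x^++\epsilon)-G(\epsilon)$ accomplishes). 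This is a presentational matter, not a gap in the argument; once corrected your two-parameter limit $M\to\infty$, $\varepsilon\to 0$ reproduces the paper's $N\to\infty$, $\epsilon\to 0$ step.
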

\begin{proof} We can assume $u\in \F_{2B}$ since we shall look only inside $B$ and $u\in\F_{loc}$. We build here a function $G$ to be a prototype for a power function. Let $G:(0,\infty)\to (0,\infty)$  be a piecewise $C^1$ function such that $G'(s)$ is bounded by a constant say $C>0$. Assume also that $G$ has a non-negative, non-decreasing derivative $G'(x)$ and $G(0+)=0$. Define $H(s)\geq 0$ by $H'(s)=\sqrt{G'(s)}$, $H(0+)=0$. Observe that we have $G(s)\leq sG'(s)$, $H(s)\leq sH'(s)$.
Let $\eta$ be a cutoff in $B$ as above. Then, we have by Lemma \ref{lemma:moser} and Lemma \ref{lem:cutoff} that
\[
\phi=\eta^2 (G(u^++\epsilon)-G(\epsilon))\in \F_{B}.
\]
In particular, $\phi$ is a proper test function. In order to lighten the notation we denote $G_\epsilon(x) \defeq G(x^++\epsilon)-G(\epsilon)$ and $H_\epsilon(x) \defeq H(x^++\epsilon)-H(\epsilon)$. Since $u$ is a subsolution to \eqref{eq:Poisson} in $B$, we have
\begin{equation}\label{eq:comp1}
\Ew(u,\eta^2 G_\epsilon(u))\leq -\int_{\R^d} \langle a \nabla f, \nabla (\eta^2 G_\epsilon(u)) \rangle dx.
\end{equation}
Consider first the left hand side and observe that
\[
\E(u,\eta^2 G_\epsilon(u)) =\int_{\R^d}\langle a \nabla u^+,\nabla u^+\rangle G'_\epsilon(u) \eta^2dx+2\int_{\R^d} \langle a \nabla u,\nabla \eta\rangle G_\epsilon(u)\eta dx.
\]
Since
\[
\int_{\R^d} \langle a \nabla u^+,\nabla u^+\rangle G_\epsilon'(u)\eta^2dx=\E_\eta(H_\epsilon(u),H_\epsilon(u)),
\]
moving everything on the right hand side of \eqref{eq:comp1}, and taking the absolute value, we have
\begin{equation}\label{eq:comp2}
\E_\eta(H_\epsilon(u),H_\epsilon(u)) \leq 2 \int_{\R^d}| \langle a \nabla u,\nabla \eta\rangle G_\epsilon(u)\eta| dx +\int_{\R^d} |\langle a \nabla f, \nabla (G_\epsilon(u)\eta^2) \rangle | dx.
\end{equation}
The first term is estimated using $G_\epsilon(u)\leq u^+ G'_\epsilon(u)$ and by Cauchy Schwartz inequality. (We use also the fact that $u^+\nabla u = u^+\nabla u^+$).
\begin{align*}
  \int_{\R^d} |\langle a \nabla u,\nabla \eta\rangle G_\epsilon(u)\eta |dx\leq & \int_{\R^d} |\langle a \nabla u^+,\nabla \eta\rangle G'_\epsilon(u) u^+\eta |dx \\ \leq&\E_\eta(H_\epsilon(u),H_\epsilon(u))^\frac{1}{2}\|G_\epsilon'(u)(u^+)^2\|_{1,\Lambda}^\frac{1}{2}\|\nabla \eta\|_\infty .
\end{align*}
The second term, after using Leibniz rule, is controlled by
\[
 \int_{\R^d} |\langle a\nabla f, \nabla u^+\rangle G_\epsilon'(u)\eta^2  |dx+ 2 \int_{\R^d} |\langle a\nabla f, G_\epsilon(u)\eta \nabla\eta \rangle|dx
\]
whose terms can be estimated by
\[
\int_{\R^d} |\langle a\nabla f, \nabla u^+\rangle G'_\epsilon(u)\eta^2  |dx \leq \|\nabla f\|_\infty \|1_B G'_\epsilon(u)\|_{1,\Lambda}^\frac{1}{2} \E_\eta(H_\epsilon(u),H_\epsilon(u))^\frac{1}{2}
\]
and by
\[
\int_{\R^d} |\langle a\nabla f,\nabla\eta \rangle G_\epsilon(u)\eta |dx \leq \|\nabla\eta\|_\infty \|\nabla f\|_\infty  \|G_\epsilon(u)1_B\|_{1,\Lambda}.
\]
Putting everything together in \eqref{eq:comp2} we end up with the estimate
\begin{align*}
\E_\eta(H_\epsilon(u),H_\epsilon(u)) &\leq 2 \|G_\epsilon'(u)(u^+)^2\|_{1,\Lambda}^\frac{1}{2}\|\nabla \eta\|_\infty\E_\eta(H_\epsilon(u),H_\epsilon(u))^\frac{1}{2}\\ &+ \|\nabla f\|_\infty \|1_B G'_\epsilon(u)\|_{1,\Lambda}^\frac{1}{2}  \E_\eta(H_\epsilon(u),H_\epsilon(u))^\frac{1}{2}
\\& +2\|\nabla\eta\|_\infty \|\nabla f\|_\infty  \|G_\epsilon(u)1_B\|_{1,\Lambda},
\end{align*}
which finally gives, up to a universal constant $c>0$,
\begin{align*}
\E_\eta(H_\epsilon(u),H_\epsilon(u))\leq & c\Bigl[ \|G'_\epsilon(u)(u^+)^2\|_{1,\Lambda}\|\nabla\eta\|^2_\infty+\|\nabla f\|^2_\infty \|1_B G_\epsilon'(u)\|_{1,\Lambda} \\&+\|\nabla\eta\|_\infty \|\nabla f\|_\infty  \|G_\epsilon(u)1_B\|_{1,\Lambda}\Bigr].
\end{align*}
At this point, it is important to observe that $H_\epsilon(u)\in \F$ so that we can apply the Sobolev's inequality \eqref{eq:localsobolev-cutoff} with cut-off function $\eta$, namely
\[
\|\eta H_\epsilon(u)\|^2_\rho\leq 2C_{sob}\|1_B\lambda^{-1}\|_{q} \Bigl[\Ew_\eta (H_\epsilon(u),H_\epsilon(u))+ \|\nabla \eta\|^2_\infty \|1_B H_\epsilon(u)\|^2_{2,\Lambda}\Bigr].
\]
Concatenating the two inequalities yields
\begin{multline*}
\|\eta H_\epsilon(u)\|^2_\rho\leq 2 C_1\|1_B\lambda^{-1}\|_{q}\Bigl[\|H_\epsilon'(u)^2 u^2\|_{1,\Lambda}\|\nabla\eta\|^2_\infty+\|\nabla f\|^2_\infty \|1_B H_\epsilon'(u)^2\|_{1,\Lambda} \\+\|\nabla\eta\|_\infty \|\nabla f\|_\infty  \|G_\epsilon(u)1_B\|_{1,\Lambda}+\|\nabla \eta\|^2_\infty \|1_B H_\epsilon(u)\|^2_{2,\Lambda}\Bigr]
\end{multline*}
Finally it is time to fix a $H,G$ as power-like function. Namely we take, for $\alpha>1$
\[
H_N(x)\defeq\begin{cases}x^\alpha & x\leq N\\
\alpha N^{\alpha-1}x+(1-\alpha)N^{\alpha} & x>N
\end{cases}
\]
which corresponds in taking
\[
G_N(x)=\int_0^x H_N'(s)^2\,ds.
\]
The function $G_N(x)$ has the right properties, moreover $H_N(x)\uparrow x^\alpha$ and $G_N(x)\uparrow \frac{\alpha^2}{2\alpha-1} x^{2\alpha-1}$ as $N$ goes to infinity. Therefore, letting $N\to\infty$, and using the monotone convergence theorem, we obtain
\begin{multline*}
\|\eta (u^++\epsilon)^\alpha\|^2_\rho\leq 2 C_1\|1_B\lambda^{-1}\|_{q}\Bigl[(\alpha^2+1)\| 1_B  (u^++\epsilon)^{2\alpha}\|_{1,\Lambda}\|\nabla\eta\|^2_\infty\\+\|\nabla f\|^2_\infty \alpha^2 \|1_B  (u^++\epsilon)^{2\alpha-2}\|_{1,\Lambda} +\frac{\alpha^2}{2\alpha-1}\|\nabla\eta\|_\infty \|\nabla f\|_\infty  \|u^{2\alpha-1}1_B\|_{1,\Lambda}\Bigr].
\end{multline*}
Taking the limit as $\epsilon\to 0$ and averaging over balls we get
\begin{multline*}
\|\eta (u^+)^\alpha\|^2_{B,\rho} \leq 2 C_1\|\lambda^{-1}\|_{B,q}\|\Lambda\|_{B,p}|B|^\frac{2}{d}\Bigl[(\alpha^2+1)\| (u^+)^{2\alpha}\|_{B,p_*}\|\nabla\eta\|^2_\infty\\+\|\nabla f\|^2_\infty \alpha^2 \|(u^+)^{2\alpha-2}\|_{B,p_*} +\frac{\alpha^2}{2\alpha-1}\|\nabla\eta\|_\infty \|\nabla f\|_\infty  \|(u^+)^{2\alpha-1}\|_{B,p_*}\Bigr].
\end{multline*}
By Jensen's inequality it holds
\[
 \|u^+\|_{B,(2\alpha-2)p_*}\leq  \|u^+\|_{B,2\alpha p_*},\quad\|u^+\|_{B,(2\alpha-1)p_*}\leq  \|u^+\|_{B,2\alpha p_*},
\]
therefore we can rewrite and get the desired result
\begin{multline*}
\|\eta u^+\|_{B,\alpha\rho}^{2\alpha} \leq 2 C_1\|\lambda^{-1}\|_{B,q}\|\Lambda\|_{B,p}|B|^\frac{2}{d}\Bigl[(\alpha^2+1)\| u^+\|^{2\alpha}_{B,2\alpha p_*}\|\nabla\eta\|^2_\infty\\+\|\nabla f\|^2_\infty \alpha^2 \|u^+\|^{2\alpha-2}_{B,2\alpha p_*} +\frac{\alpha^2}{2\alpha-1}\|\nabla\eta\|_\infty \|\nabla f\|_\infty  \|u^+\|^{2\alpha-1}_{B,2\alpha p_*}\Bigr].
\end{multline*}
Finally, absorbing the mixed product in the two squares we obtain \eqref{eq:meanvalue}.
\end{proof}
Clearly the same result holds, with the same constant, also for supersolutions with $u^+$ replaced by $u^-$. It is then clear that we can get the same type of inequality for solutions to \eqref{eq:Poisson}. This is the content of the next corollary.

\begin{corollary}\label{cor} Let $u\in \F_{loc}$ be a solution of \eqref{eq:Poisson} in $B$. Let $\eta\in C_0^\infty(B)$ be a cut-off function. Then there exists a constant $C_1>0$ such that for all $\alpha\geq 1$
\begin{equation}\label{eq:corollary}
\|\eta u\|_{B,\alpha\rho}^{2\alpha} \leq \alpha^2  C_1\|\lambda^{-1}\|_{B,q}\|\Lambda\|_{B,p}|B|^\frac{2}{d}\Bigl[\|\nabla\eta\|^2_\infty\| u\|^{2\alpha}_{B,2\alpha p_*}+\|\nabla f\|^2_\infty \|u\|^{2\alpha-2}_{B,2\alpha p_*}\Bigr].
\end{equation}
\end{corollary}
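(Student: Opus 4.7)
The plan is to reduce the corollary to the preceding proposition by exploiting the symmetry of the Poisson equation under $u \mapsto -u$. If $u \in \F_{loc}$ solves \eqref{eq:Poisson} with source $f$, then in particular $u$ is a subsolution, so the proposition applies directly and bounds $\|\eta u^+\|_{B,\alpha\rho}^{2\alpha}$. Moreover, $-u$ solves the Poisson equation with source $-f$: indeed, testing against $\phi \geq 0$ swaps the sign of both sides, so the defining identity is preserved. Since $\|\nabla(-f)\|_\infty = \|\nabla f\|_\infty$, applying the proposition to $-u$ yields exactly the same bound on $\|\eta(-u)^+\|_{B,\alpha\rho}^{2\alpha} = \|\eta u^-\|_{B,\alpha\rho}^{2\alpha}$, with the same constant.

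To combine these, I will use the pointwise identity $|u|^r = (u^+)^r + (u^-)^r$, valid for every $r>0$ because $u^+$ and $u^-$ have disjoint supports. Multiplying by $\eta^{\alpha\rho}$ and integrating gives
\[
\|\eta u\|_{B,\alpha\rho}^{\alpha\rho} \;=\; \|\eta u^+\|_{B,\alpha\rho}^{\alpha\rho} + \|\eta u^-\|_{B,\alpha\rho}^{\alpha\rho}.
\]
Raising to the power $2/\rho \in (0,1)$ and using subadditivity $(a+b)^{2/\rho} \leq a^{2/\rho} + b^{2/\rho}$, I obtain
\[
\|\eta u\|_{B,\alpha\rho}^{2\alpha} \;\leq\; \|\eta u^+\|_{B,\alpha\rho}^{2\alpha} + \|\eta u^-\|_{B,\alpha\rho}^{2\alpha}.
\]
Applying \eqref{eq:meanvalue} to both summands and using the trivial estimates $u^\pm \leq |u|$ on the right-hand sides, so that $\|u^\pm\|_{B,2\alpha p_*} \leq \|u\|_{B,2\alpha p_*}$, yields the claim up to an overall factor of $2$ which is absorbed into the universal constant $C_1$.

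There is no real obstacle here; the corollary is essentially bookkeeping on top of the preceding proposition. The only mild subtlety worth recording is the use of $|u|^r = (u^+)^r + (u^-)^r$ rather than the triangle inequality $|u| \leq u^+ + u^-$ in $L^{\alpha\rho}$; the former keeps the constant sharp (one gains only a factor $2$), while the latter would introduce a cross term that is awkward to handle at the level of the $2\alpha$-th power. With this observation the corollary follows immediately.
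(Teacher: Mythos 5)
Your proof is correct and follows essentially the same route as the paper: treat $u$ as simultaneously a subsolution (giving the bound on $\|\eta u^+\|$) and a supersolution (giving the matching bound on $\|\eta u^-\|$, which you obtain by applying the proposition to $-u$ with source $-f$), then recombine via the disjoint-support identity $|u|^r=(u^+)^r+(u^-)^r$ together with $\|u^\pm\|_r\le\|u\|_r$. Your side remark is also well placed: using the triangle inequality instead of the pointwise identity would produce a $2^{2\alpha}$ factor, whose $\alpha$-th root does not decay and would wreck the subsequent Moser iteration.
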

\begin{proof} The proof is trivial, since $u$ is both a subsolution and a supersolution of \eqref{eq:Poisson}. Moreover, $u=u^+-u^-$ and $\|u^+\|_r\vee \|u^-\|_r\leq\|u\|_r$.
\end{proof}

\begin{theorem}\label{thm:meanvalueinequality} Fix a point $x_0\in \R^d$ and $R>0$. Denote by $B(R)$ the ball of center $x_0$ and radius $R$. Suppose that $u$ is a solution in $B(R)$ of \eqref{eq:Poisson}, and assume that $|\nabla f|\leq c_f/R$. Then for any $p,q\in (1,\infty]$ such that $1/p+1/q<2/d$, $d\geq 2$, there exist $\kappa\defeq \kappa(q,p,d)\in (1,\infty)$, $\gamma\defeq \gamma(q,p,d)\in (0,1]$ and $C_2\defeq C_2(q,p,d,c_f)>0$ such that
\begin{equation}\label{eq:moser2}
\|u\|_{B(\sigma'R),\infty}\leq C_2
\biggl(\frac{1\vee\|\lambda^{-1}\|_{B(R),q}\|\Lambda\|_{ B(R),p}}{(\sigma-\sigma')^2 }\biggr)^\kappa \|u\|_{B(\sigma R),\rho}^\gamma\vee\|u\|_{B(\sigma R),\rho},
\end{equation}
for any fixed $1/2\leq \sigma'<\sigma\leq 1$.
\end{theorem}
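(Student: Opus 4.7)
My plan is to establish \eqref{eq:moser2} through a Moser iteration based on Corollary \ref{cor}. The key quantity is $\kappa \defeq \rho/(2 p_*)$, which is strictly greater than $1$ precisely under the assumption $1/p+1/q<2/d$, and serves as the gain of integrability at each step. I choose decreasing radii $R_n \defeq \sigma' R + (\sigma-\sigma')R\cdot 2^{-n}$, so that $R_0 = \sigma R$, $R_n \downarrow \sigma' R$, and $R_n\in[R/2,R]$ for every $n$ thanks to $\sigma'\geq 1/2$. I also pick exponents $\alpha_n\defeq \kappa^{n+1}$, tuned so that the output norm $L^{\alpha_n\rho}$ at step $n$ equals the input norm $L^{2\alpha_{n+1}p_*}$ required at step $n+1$. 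The corresponding cutoffs $\eta_n\in C_0^\infty(B(R_n))$ are taken with $\eta_n\equiv1$ on $B(R_{n+1})$ and $\|\nabla\eta_n\|_\infty\leq c\cdot 2^n/((\sigma-\sigma')R)$. Writing $I_n\defeq\|u\|_{B(R_n),2\alpha_n p_*}$, we have $I_0 = \|u\|_{B(\sigma R),\rho}$ and $I_n \to \|u\|_{B(\sigma' R),\infty}$ as $n\to\infty$ by standard $L^r$-to-$L^\infty$ convergence on bounded measure spaces.

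Applying Corollary \ref{cor} with $\alpha=\alpha_n$ and $\eta=\eta_n$, and exploiting $R_n\in[R/2,R]$ to bound the averages on $B(R_n)$ by those on $B(R)$ up to a dimensional constant, together with $|B(R_n)|^{2/d}\|\nabla\eta_n\|_\infty^2\leq C\cdot 4^n(\sigma-\sigma')^{-2}$ and $|B(R_n)|^{2/d}\|\nabla f\|_\infty^2\leq C c_f^2$, I obtain the per-step recursion
\[
I_{n+1}^{2\alpha_n}\leq C_{\star}\, \alpha_n^2\,\Theta\,\frac{4^n}{(\sigma-\sigma')^2}\bigl(I_n^{2\alpha_n}+I_n^{2\alpha_n-2}\bigr),
\]
where $\Theta \defeq 1\vee\|\lambda^{-1}\|_{B(R),q}\|\Lambda\|_{B(R),p}$ and $C_\star = C_\star(d,p,q,c_f)$. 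Extracting the $2\alpha_n$-th root and using subadditivity of $x\mapsto x^{1/(2\alpha_n)}$ yields the split recursion $I_{n+1}\leq D_n\bigl(I_n\vee I_n^{1-1/\alpha_n}\bigr)$ with $D_n\defeq\bigl(2 C_\star \alpha_n^2\Theta\cdot 4^n/(\sigma-\sigma')^2\bigr)^{1/(2\alpha_n)}$.

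The product $\prod_n D_n$ converges: $\log D_n$ is bounded by $\alpha_n^{-1}$ times $(\log\alpha_n + n + \log[\Theta/(\sigma-\sigma')^2])$, and since $\alpha_n=\kappa^{n+1}$ grows geometrically, each contribution is summable. The leading term $\sum_n\alpha_n^{-1}\log[\Theta/(\sigma-\sigma')^2]$ is proportional to $\log[\Theta/(\sigma-\sigma')^2]$, producing the exponent $\kappa$ in the final constant $C_2(\Theta/(\sigma-\sigma')^2)^\kappa$. As for the exponent $\gamma$ on $M\defeq I_0$: in the regime $I_n\leq 1$ the sublinear branch $D_n I_n^{1-1/\alpha_n}$ dominates, and iterating accumulates the exponent $\prod_n(1-1/\alpha_n)=:\gamma\in(0,1)$ on $M$, strictly positive and finite because $\sum_n\alpha_n^{-1}<\infty$. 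When $I_n\geq 1$ the linear branch $D_n I_n$ dominates, with exponent $1$.

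The main obstacle is combining these two regimes into the single expression $M^\gamma\vee M$ on the right-hand side. If $M\geq 1$, I argue by induction that the linear branch persists at each step, giving the $C_2(\Theta/(\sigma-\sigma')^2)^\kappa M$ bound; if $M<1$, the sublinear branch is used to pick up $M^\gamma$, treating with care the possibility that $I_n$ may cross $1$ along the iteration (which only contributes bounded constants to the final estimate via the convergent product $\prod_n D_n$). Taking $n\to\infty$ and using $\lim_n I_n \geq \|u\|_{B(\sigma' R),\infty}$ then yields \eqref{eq:moser2}, with $\gamma,\kappa$ depending on $p,q,d$ only and $C_2$ depending additionally on $c_f$.
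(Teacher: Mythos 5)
Your proposal is correct and follows essentially the same Moser iteration scheme as the paper's proof: the same geometric sequence of radii interpolating between $\sigma' R$ and $\sigma R$, the same choice of exponents $\alpha_n = (\rho/2p_*)^{n+1}$ so that the output of Corollary \ref{cor} at one step feeds the input of the next, the same cutoffs with $\|\nabla\eta_n\|_\infty \lesssim 2^n/((\sigma-\sigma')R)$, the same per-step recursion with a branch at $I_n \gtrless 1$, and the same convergence of the constant product via summability of $1/\alpha_n$ and $n/\alpha_n$. The only cosmetic blemishes are that you overload the symbol $\kappa$ (using it both for the per-step gain $\rho/(2p_*)$ and for the final exponent on $\Theta/(\sigma-\sigma')^2$), and that your handling of the regime crossings is described vaguely; the paper disposes of it cleanly by noting $\gamma \le \prod_k \gamma_k \le 1$ and then $x^a \le x^\gamma \vee x$ for $\gamma \le a \le 1$.
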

\begin{proof} We are going to apply inequality  \eqref{eq:corollary} iteratively. For fixed $1/2\leq \sigma'<\sigma\leq 1$, and $k\in\N$ define
\[
\sigma_k=\sigma'+2^{-k+1}(\sigma-\sigma').
\]
It is immediate that $\sigma_k-\sigma_{k+1}=2^{-k+1}(\sigma-\sigma')$ and that $\sigma_1 = \sigma$, furthermore $\sigma_k\downarrow \sigma'$. We have already observed that $\rho>2p^*$, where $p^*$ is the H\"older's conjugate of $p$. Set $\alpha_k \defeq (\rho/2 p^*)^k$, $k\geq1$, clearly $\alpha_k>1$ for all $k\geq 1$. Finally consider a cutoff $\eta_k$ which is identically $1$ on $B(\sigma_{k+1} R)$ and $\eta_k = 0$ on $\partial B(\sigma_k R)$, assume that $\eta_k$ has a linear decay on $B(\sigma_k R)\setminus B(\sigma_{k+1} R)$, i.e. chose $\eta_k$ in such a way that $\|\nabla \eta_k\|_\infty \leq 2^{k} / (\sigma-\sigma')R$.

An application of Corollary \ref{cor} and of the relation $\alpha_k\rho = 2\alpha_{k+1}p^*$, yields
\begin{align*}
\|u&\|_{B(\sigma_{k+1}R),2\alpha_{k+1}p^*}\\
&\leq  \biggl(C\frac{2^{2k}\alpha_k^2|B(\sigma_k R)|^\frac{2}{d}}{(\sigma-\sigma')^2 R^2}\|\lambda^{-1}\|_{B(\sigma_k R),q}\|\Lambda\|_{ B(\sigma_k R),p}\biggr)^\frac{1}{2\alpha_k} \|u\|_{B(\sigma_k R),2\alpha_k p_*}^{\gamma_k}\\
&\leq
\biggl(C\frac{2^{2k}\alpha_k^2}{(\sigma-\sigma')^2 }\|\lambda^{-1}\|_{B(R),q}\|\Lambda\|_{ B(R),p}\biggr)^\frac{1}{2\alpha_k} \|u\|_{B(\sigma_k R),2\alpha_k p^*}^{\gamma_k},
\end{align*}
where $\gamma_k = 1$ if $\|u\|_{B(\sigma_k R),2\alpha_k p^*}\geq1$ and $\gamma_k = 1-1/\alpha_k$ otherwise. We can iterate the inequality above and stop at $k=1$, so that we get
\begin{equation*}
\|u\|_{B(\sigma_{j+1}R),2\alpha_{j+1}p^*}\leq
\prod_{k=1}^j\biggl(C\frac{(\rho/p^*)^{2k}}{(\sigma-\sigma')^2 }\|\lambda^{-1}\|_{B(R),q}\|\Lambda\|_{ B(R),p}\biggr)^\frac{1}{2\alpha_k} \|u\|_{B(\sigma R),\rho}^{\prod_{k=1}^j\gamma_k}.
\end{equation*}
Observe that $\kappa\defeq\frac{1}{2}\sum1/\alpha_k<\infty$, $\sum k/\alpha_k<\infty$ and that
\[
\|u\|_{B(\sigma'R),2\alpha_j p^*}\leq \biggl(\frac{|B(\sigma_kR)|}{|B(\sigma'R)|}\biggr)^\frac{1}{2\alpha_j p^*}\|u\|_{B(\sigma_jR),2\alpha_j p^*}\leq K\|u\|_{B(\sigma_jR),2\alpha_j p^*},
\]
for some $K$ and all $j\geq 1$. Hence, taking the limit as $j\to\infty$, gives the inequality
\begin{equation*}
\|u\|_{B(\sigma'R),\infty}\leq C_2
\biggl(\frac{1\vee\|\lambda^{-1}\|_{B(R),q}\|\Lambda\|_{ B(R),p}}{(\sigma-\sigma')^2 }\biggr)^\kappa \|u\|_{B(\sigma R),\rho}^{\prod_{k=1}^\infty \gamma_k}.
\end{equation*}
Define $\gamma\defeq \prod_{k=1}^\infty (1-1/\alpha_k) \in (0,1]$. Then, $0<\gamma \leq \prod_{k=1}^\infty \gamma_k\leq 1$ and the above inequality can be written as
\[
\|u\|_{B(\sigma'R),\infty}\leq C_2
\biggl(\frac{1\vee\|\lambda^{-1}\|_{B(R),q}\|\Lambda\|_{ B(R),p}}{(\sigma-\sigma')^2 }\biggr)^\kappa \|u\|_{B(\sigma R),\rho}^\gamma\vee\|u\|_{B(\sigma R),\rho}.
\]
which is the desired inequality.
\end{proof}

The previous inequality can be improved. This is what the next Corollary is about. For the proof we follow the argument of \cite{saloff2002aspects}[Theorem 2.2.3].
\begin{corollary} Suppose that $u$ satisfies the assumptions of Theorem \ref{thm:meanvalueinequality}. Then, for all $\alpha\in(0, \infty)$ and for any $1/2\leq \sigma'<\sigma < 1$ there exist $C_3:=C_3(q,p,d,c_f)>0$, $\gamma'\defeq \gamma'(\gamma,\alpha,\rho)$ and $\kappa'\defeq \kappa'(\kappa,\alpha,\rho)$, such that
\begin{equation}\label{cor:moser}
\|u\|_{B(\sigma'R),\infty}\leq C_3
\biggl(\frac{1\vee\|\lambda^{-1}\|_{B(R),q}\|\Lambda\|_{ B(R),p}}{(\sigma-\sigma')^2 }\biggr)^{\kappa'} \|u\|_{B(\sigma R),\alpha}^{\gamma'}\vee\|u\|_{B(\sigma R),\alpha}.
\end{equation}
\end{corollary}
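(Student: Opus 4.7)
The plan is to follow the standard Moser-type bootstrap: starting from the $L^\rho\to L^\infty$ estimate \eqref{eq:moser2}, one upgrades the right-hand side to an arbitrary $L^\alpha$ norm by combining interpolation with iteration on a geometric sequence of radii. If $\alpha\geq\rho$, averaged Jensen's inequality gives $\|u\|_{B(r),\rho}\leq \|u\|_{B(r),\alpha}$, so \eqref{cor:moser} follows at once from \eqref{eq:moser2} with $\gamma'=\gamma$ and $\kappa'=\kappa$. From now on assume $0<\alpha<\rho$ and set $\theta:=\alpha/\rho\in(0,1)$.

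The key input is the interpolation inequality for averaged norms
$$\|u\|_{B(r),\rho}\;\leq\;\|u\|_{B(r),\alpha}^{\theta}\,\|u\|_{B(r),\infty}^{1-\theta},$$
which follows from $\tfrac{1}{\rho}=\tfrac{\theta}{\alpha}+\tfrac{1-\theta}{\infty}$ (the factor of $|B|$ coming from averaging cancels exactly). Plugging this into \eqref{eq:moser2} for radii $s'<s$ inside $[\sigma',\sigma]$, writing $M(r):=\|u\|_{B(rR),\infty}$, $N:=\|u\|_{B(\sigma R),\alpha}$ and $K:=1\vee\|\lambda^{-1}\|_{B(R),q}\|\Lambda\|_{B(R),p}$, and using monotonicity of $r\mapsto\|u\|_{B(rR),\alpha}$ to replace $\|u\|_{B(sR),\alpha}$ by $N$, I obtain
$$M(s')\;\leq\; C_2\biggl(\frac{K}{(s-s')^2}\biggr)^{\kappa}\,\Bigl(N^{\theta\gamma}M(s)^{(1-\theta)\gamma}\;\vee\;N^{\theta}M(s)^{1-\theta}\Bigr).$$
Since $(1-\theta)\gamma<1$ and $1-\theta<1$, both branches carry an exponent of $M$ strictly less than $1$, which is exactly the feature that drives the iteration.

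I then iterate on the geometric sequence $s_k:=\sigma'+(1-2^{-k})(\sigma-\sigma')$, so $s_0=\sigma'$, $s_k\uparrow\sigma$, and $s_{k+1}-s_k=2^{-(k+1)}(\sigma-\sigma')$, whence the prefactor at step $k$ becomes $C_2\bigl(4^{k+1}K/(\sigma-\sigma')^2\bigr)^{\kappa}$. Unrolling the recursion $n$ times, the exponent of $N$ is the partial geometric sum $\theta\gamma\sum_{k=0}^{n-1}[(1-\theta)\gamma]^k$, the term $M(s_n)$ carries exponent $[(1-\theta)\gamma]^n\to 0$, and the product of prefactors is controlled by $\sum_{k\geq 0}(k+1)[(1-\theta)\gamma]^k<\infty$. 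Letting $n\to\infty$ yields \eqref{cor:moser} with
$$\kappa'\;=\;\frac{\kappa}{1-(1-\theta)\gamma},\qquad \gamma'\;=\;\frac{\theta\gamma}{1-(1-\theta)\gamma},$$
together with the analogous pair coming from the second branch of the $\vee$ (which only adds a bounded multiplicative factor and a term linear in $N$ on the right).

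The main obstacle is the closure step: to pass to the limit one needs $M(\sigma)<\infty$ in order to ensure $M(s_n)^{[(1-\theta)\gamma]^n}\to 1$. This is precisely why the corollary restricts to $\sigma<1$ (rather than $\sigma\leq 1$ as in Theorem \ref{thm:meanvalueinequality}): applying \eqref{eq:moser2} once on a pair $\sigma<\sigma''<1$, together with the fact that $u\in L^\rho_{loc}$ by the Sobolev embedding \eqref{eq:localsobolev}, delivers a finite (albeit non-optimal) upper bound for $M(\sigma)$ and legitimises the limit in the iteration. Everything else is routine bookkeeping of exponents in a convergent geometric series.
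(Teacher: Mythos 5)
Your proposal is correct and follows essentially the same argument as the paper (which itself attributes the scheme to Saloff-Coste, Theorem 2.2.3): the $L^\rho$-to-$L^\alpha$ interpolation $\|u\|_{B,\rho}\le\|u\|_{B,\alpha}^{\theta}\|u\|_{B,\infty}^{1-\theta}$ with $\theta=\alpha/\rho$, iteration on the same geometric sequence of radii, and summation of the resulting geometric series in the exponents, arriving at the same $\gamma'=\gamma\theta/(1-\gamma+\gamma\theta)$. Your closing remark about $\sigma<1$ being there to guarantee a priori finiteness of $\|u\|_{B(\sigma R),\infty}$ is an observation the paper leaves implicit, but it is the right explanation.
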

\begin{proof} From inequality \eqref{eq:moser2} we get
\begin{equation*}
\|u\|_{B(\sigma'R),\infty}\leq C_2
\biggl(\frac{1\vee\|\lambda^{-1}\|_{B(R),q}\|\Lambda\|_{ B(R),p}}{(\sigma-\sigma')^2 }\biggr)^\kappa \|u\|_{B(\sigma R),\rho}^{\gamma}\vee \|u\|_{B(\sigma R),\rho}.
\end{equation*}
Hence, the result follows immediately for $\alpha>\rho$ by means of Jensen's inequality. For $\alpha\in(0,\rho)$ we use again an iteration argument. Consider $\sigma_k=\sigma-2^{-k}(\sigma-\sigma')$. By H\"older's inequality we get
\[
\|u\|_{B(\sigma_k R),\rho}\leq \|u\|_{B(\sigma_k R),\alpha}^\theta\|u\|_{B(\sigma_k R),\infty}^{1-\theta}
\]
with $\theta= \alpha/\rho$. An  application of inequality \eqref{eq:moser2} gives
\[
\|u\|_{B(\sigma_{k-1}R),\infty}\leq\\ 2^{2\kappa k} J \|u\|_{B(\sigma R),\alpha}^{\gamma_k\theta}\|u\|_{B(\sigma_k R),\infty}^{\gamma_k-\gamma_k\theta},
\]
here $\gamma_k=1$ if $\|u\|_{B(\sigma_k R),\rho}\geq 1$, $\gamma_k = \gamma$ otherwise and $J = c(1\vee\|\lambda^{-1}\|_{B(R),q}\|\Lambda\|_{ B(R),p}/(\sigma-\sigma')^2 )^{\kappa}$, where $c$ is a constant that can be taken greater than one.

By iteration from $k=1$ up to $i>1$, via similar computations as the Theorem \ref{thm:meanvalueinequality}, we get
\begin{equation*}
\|u\|_{B(\sigma' R),\infty}\leq (J 2^{2\kappa})^{\sum_{k=1}^{i}k(1-\theta)^{k-1}}\Bigl(\|u\|_{B(\sigma R),\alpha}^{\gamma\theta\sum_{k=1}^{i}(\gamma-\gamma\theta)^{k-1}}\vee \|u\|_{B(\sigma R),\alpha}^{\theta\sum_{k=1}^{i}(1-\theta)^{k-1}}\Bigr) \|u\|_{B(\sigma R),\infty}^{\beta_i}
\end{equation*}
where $\beta_i\to 0$ as $i\to \infty$.
which gives the desired result taking the limit as $i\to\infty$. In particular we get $\gamma' = \gamma \theta / (1-\gamma+\gamma \theta)$.
\end{proof}

\subsection{Existence of the Minimal Diffusion}\label{sec:mindiff}

In the context of diffusions in random environment we would like to be able to fix a common starting position for almost all realizations of the environment, or alternatively to start the process from all possible positions $x\in\R^d$. To achieve this aim we assume the following:
\begin{itemize}
\item[$(b.3)$\namedlabel{ass:b.3}{$(b.3)$}] $\lambda^{-1}(x),\Lambda(x)\in L^\infty_{loc}(\R^d)$.
\end{itemize}
Recall that the resolvent $G_\alpha^{B,\theta}$ restricted to $B$ of a diffusion process $\mathbf{M}^{\theta}\defeq(X_t^\theta,\PR^\theta_x,\zeta^\theta)$ is defined by
\[
G_\alpha^{B,\theta} f(x)\defeq\mean^\theta_x\biggl[\int_0^{\tau_B} e^{-\alpha t}f(X^\theta_t)\,dt\biggr],\quad f\geq0
\]
being $\tau_B = \inf\{t>0: X^\theta_t\in B^c\}$.  When $\theta\equiv 1$ we will drop it from the notation.
\begin{theorem}\label{thm:min_diff}
Assume \ref{ass:b.1}, \ref{ass:b.2}, \ref{ass:b.3}, and  $\theta,\theta^{-1}\in L^\infty_{loc}(\R^d)$. Denote by $C_\infty(B)$ the set of continuous functions vanishing at the boundary. Then, there exists a unique standard diffusion process $\mathbf{M}^{\theta}\defeq(X_t^\theta,\PR^\theta_x,\zeta^\theta)$, $x\in \R^d$ whose resolvent $G^{B,\theta}_\alpha$ restricted to any open bounded set $B$ satisfies
\[
G_\alpha^{B,\theta} f \in C_\infty(B),\quad f\in L^p(B,\theta dx),\quad p>d
\]
and $G_\alpha^{B,\theta} C_\infty(B)$ is dense in $C_\infty(B)$.
\end{theorem}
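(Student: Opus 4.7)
The plan is to construct the diffusion locally on balls, where assumption~\ref{ass:b.3} together with~\ref{ass:b.1} renders the problem classically uniformly elliptic, and then to glue along an exhaustion of $\R^d$. Fix a ball $B \subset \R^d$. Combining \ref{ass:b.1}, \ref{ass:b.3} with $\theta, \theta^{-1} \in L^\infty_{loc}(\R^d)$, there exist constants $0 < c_B \leq C_B < \infty$ such that $c_B |\xi|^2 \leq \langle a(x)\xi,\xi \rangle \leq C_B |\xi|^2$ and $c_B \leq \theta(x) \leq C_B$ for almost every $x \in B$. Hence $(\E, \F_B^\theta)$ is a strongly local, regular Dirichlet form on $L^2(B, \theta dx)$ with core $C_0^\infty(B)$, comparable to the form of a uniformly elliptic divergence-form operator with bounded measurable coefficients on $B$.

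By Fukushima's theorem \cite[Theorem 7.2.2]{fukushima1994dirichlet} there is a Hunt process with continuous sample paths associated to $(\E, \F_B^\theta)$, uniquely determined up to a properly exceptional set. For $f \in L^p(B, \theta dx)$ with $p > d$, the function $u = G_\alpha^{B,\theta} f \in \F_B^\theta$ is the weak solution of $\alpha \theta u - \nabla \cdot (a \nabla u) = \theta f$ in $B$ with zero boundary values, i.e.\ $\alpha\langle u,\phi\rangle_\theta+\E(u,\phi)=\langle f,\phi\rangle_\theta$ for all $\phi \in \F_B^\theta$. Under the local uniform ellipticity available here, the classical De~Giorgi--Nash--Moser theory (see~\cite{gilbarg2001elliptic}) yields H\"older continuity of $u$ on compact subsets of $B$, and since $B$ is a ball every boundary point is regular in the sense of Wiener, so $u$ extends continuously to $\overline{B}$ with $u|_{\partial B} = 0$. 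This proves that $G_\alpha^{B,\theta}$ maps $L^p(B, \theta dx)$ into $C_\infty(B)$ for all $p > d$.

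The continuity of $x \mapsto G_\alpha^{B,\theta} f(x)$ upgrades the strongly continuous contraction resolvent on $L^2(B, \theta dx)$ to a (sub-)Feller resolvent on $C_\infty(B)$. By standard Feller/Dirichlet-form arguments (see~\cite[Ch.~II]{rockner} and~\cite[Ch.~7]{fukushima1994dirichlet}) the Hunt process can then be refined to a standard diffusion on $B$ starting from every $x \in B$, uniquely determined by $\{G_\alpha^{B,\theta}\}_\alpha$. Density of $G_\alpha^{B,\theta}(C_\infty(B))$ in $C_\infty(B)$ follows from the probabilistic identity
\[
\alpha G_\alpha^{B,\theta} f(x) = \mean_x\bigl[f(X_{T/\alpha}^{B,\theta})\,\mathbf{1}_{\{T/\alpha < \tau_B\}}\bigr],\qquad T \sim \mathrm{Exp}(1),
\]
continuity of paths, uniform continuity of $f$ on $\overline{B}$ and dominated convergence, which together yield $\alpha G_\alpha^{B,\theta} f \to f$ uniformly as $\alpha \to \infty$. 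Finally, to pass from $B$ to $\R^d$ one chooses an exhausting sequence $B_n \uparrow \R^d$; the processes $\mathbf{M}^{B_n,\theta}$ are consistent, in that the process in $B_{n+1}$ stopped at the exit time from $B_n$ has the same law as the process in $B_n$, which produces the global minimal diffusion $\mathbf{M}^{\theta} = (X_t^\theta, \PR_x^\theta, \zeta^\theta)$ with explosion time $\zeta^\theta = \lim_n \tau_{B_n}$.

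The main technical obstacle is to guarantee that the resolvent is continuous \emph{up to and including} the boundary, so that the properly exceptional set delivered by Fukushima's theorem can be discarded: interior H\"older regularity follows from the Moser estimates developed in Section~\ref{sec:sobmos} combined with a De~Giorgi oscillation estimate in the uniformly elliptic regime granted by~\ref{ass:b.3}, while the continuous vanishing on $\partial B$ relies on the fact that balls have Wiener-regular Dirichlet boundary. Once $G_\alpha^{B,\theta}$ is known to map $L^p(B,\theta dx)$ into $C_\infty(B)$ with dense range, the refinement of the Hunt process into a standard diffusion starting from every point, its uniqueness, and the gluing into a process on $\R^d$ are routine applications of the Feller/Dirichlet-form machinery.
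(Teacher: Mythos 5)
The paper does not give a proof of this theorem; it simply refers to \cite{ichihara}, \cite{kunita1970}, \cite{tomisaki}, which establish the result for Dirichlet forms with locally uniformly elliptic coefficients. Your sketch is correct and follows the standard route that those references take: localize to a ball where \ref{ass:b.3} and $\theta,\theta^{-1}\in L^\infty_{loc}$ reduce the problem to the uniformly elliptic case, invoke De~Giorgi--Nash--Moser regularity (including boundary regularity for balls) to show the killed resolvent maps $L^p$, $p>d$, into $C_\infty(B)$, conclude the sub-Feller property and refine the Hunt process to a diffusion started from every point, and finally glue along an exhaustion to obtain the global minimal diffusion. The one caveat worth flagging is that the identity $\alpha G_\alpha^{B,\theta} f(x)=\mean_x[f(X_{T/\alpha})\mathbf{1}_{\{T/\alpha<\tau_B\}}]$ with $T\sim\mathrm{Exp}(1)$, and the dominated-convergence argument you build on it, already presuppose that the process starts from every $x\in B$ with continuous paths; so strictly speaking the density of the range and the pointwise construction go hand in hand rather than in the linear order you present, which is precisely the circularity that the cited references resolve via the analytic (DGNM$\,\Rightarrow\,$Feller resolvent$\,\Rightarrow\,$Feller semigroup) route. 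This is a matter of exposition, not of substance; your argument is sound.
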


\begin{proof}
For a proof see for example  \cite{ichihara}, \cite{kunita1970}, \cite{tomisaki}.
\end{proof}

We will consider from now on only the process $\mathbf{M}^\theta$ constructed in Theorem \ref{thm:min_diff}.
Fix a ball $B\subset \R^d$ and consider the semigroup associated to the process above killed when exiting from $B$, then its semigroup is given by
\[
\mathcal{P}_t^{B,\theta} f(x)\defeq \mean_x[f(X^{\theta}_t),t<\tau_B],
\]
By Theorem \ref{thm:min_diff} and Hille-Yoshida's Theorem, $\mathcal{P}_t^{B,\theta} C_\infty(B)\subset C_\infty(B)$. Such a property turns out to be very handy to remove all the ambiguities about exceptional sets and to construct a transition kernel $p_t^{B,\theta}(x,y)$ for $\mathcal{P}_t^{B,\theta}$ which is jointly continuous in $x,y$. This is the content of the next theorem whose proof is a slight variation of \cite[Theorem 2.1]{Barlow99non-localdirichlet} since we assume to have a Feller semigroup.

\begin{theorem} \label{jointcont} Let $B\subset \R^d$ a ball and $\mathcal{P}_t$ be a Feller semigroup on $L^2(B,m)$, i.e. $\mathcal{P}_t C_\infty(B)\subset C_\infty(B)$. Assume that
\begin{equation}\label{ass:ultracontractivity}
\|\mathcal{P}_t f\|_\infty\leq M(t)\|f\|_1,
\end{equation}
for all $f\in L^1(B,m)$ and $t>0$ and some lower semicontinuous function $M(t)$ on $(0,\infty)$. Then there exists a positive symmetric kernel $p_t(x,y)$ defined on $(0,\infty)\times B\times B$ such that
\begin{itemize}
\item[(i)] $\mathcal{P}_t(x,dy) = p_t(x,y) m(dy)$, for all $x\in B$, $t>0$,
\item[(ii)] for every $t,s>0$ and $x,y\in B$
\[
p_{t+s}(x,y)=\int_B p_t(x,z)p_s(z,y)m(dz),
\]
\item[(iii)] $p_t(x,y)\leq M(t)$ for every $t>0$ and $x,y\in B$,
\item[(iv)] for every fixed $t>0$, $p_t(x,y)$ is jointly continuous in $x,y\in B$.
\end{itemize}
\end{theorem}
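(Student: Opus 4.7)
I would follow the standard argument for continuity of heat kernels of symmetric Feller semigroups with an $L^1\to L^\infty$ smoothing estimate, essentially as in \cite{Barlow99non-localdirichlet}. First, ultracontractivity \eqref{ass:ultracontractivity} makes $\mathcal{P}_t$ a bounded operator $L^1(B,m)\to L^\infty(B,m)$ of norm $\leq M(t)$; since $B$ has finite $m$-measure, the standard duality argument produces a jointly measurable $\hat p_t(x,y)$ on $B\times B$ with $0\leq \hat p_t\leq M(t)$ almost everywhere and $\mathcal{P}_t f(x) = \int_B \hat p_t(x,y)f(y)\,m(dy)$ for $m$-a.e.\ $x$ and every $f\in L^1(B,m)$. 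The Markov property gives $\hat p_t\geq 0$, self-adjointness on $L^2(B,m)$ (inherited from the symmetric Dirichlet form underlying $\mathcal{P}_t$) gives $\hat p_t(x,y)=\hat p_t(y,x)$ $(m\otimes m)$-a.e., and the semigroup identity gives Chapman-Kolmogorov almost everywhere. I would then upgrade $\mathcal{P}_t$ to a map $L^1(B,m)\to C_\infty(B)$ by approximation: for $f\in L^1(B,m)$ pick $f_n\in C_\infty(B)$ with $\|f_n-f\|_1\to 0$; then $\|\mathcal{P}_t f_n-\mathcal{P}_t f\|_\infty\leq M(t)\|f_n-f\|_1\to 0$ by \eqref{ass:ultracontractivity}, and since each $\mathcal{P}_t f_n\in C_\infty(B)$ by the Feller hypothesis, the uniform limit lies in $C_\infty(B)$ as well.

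With this in hand, I fix a jointly measurable version of $\hat p_{t/2}$ and define
\[
p_t(x,y) \defeq \mathcal{P}_{t/2}\bigl[\hat p_{t/2}(\cdot,y)\bigr](x) = \int_B \hat p_{t/2}(x,z)\, \hat p_{t/2}(z,y)\, m(dz).
\]
The previous paragraph makes $x\mapsto p_t(x,y)$ continuous on $B$ for each $y$, and a symmetric manipulation yields continuity of $y\mapsto p_t(x,y)$ for each $x$. Chapman-Kolmogorov for $\hat p$ then gives $p_t=\hat p_t$ $(m\otimes m)$-a.e., so $p_t$ is a valid density for $\mathcal{P}_t$ (item (i)) and inherits the pointwise bounds $0\leq p_t\leq M(t)$ (item (iii)), pointwise symmetry (from a.e.\ symmetry together with separate continuity and $\supp m = B$), and the kernel semigroup identity (ii).

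For joint continuity (iv), I apply the previous construction at time $t/2$ to obtain a version $p_{t/2}$ which is separately continuous and bounded by $M(t/2)$, and then use Chapman-Kolmogorov to write
\[
p_t(x,y) = \int_B p_{t/2}(x,z)\, p_{t/2}(z,y)\, m(dz)\quad\text{for all }(x,y)\in B\times B
\]
(both sides are continuous in $x$ for each $y$ and agree $m$-a.e., so agree everywhere since $\supp m = B$). For $(x_n,y_n)\to(x_0,y_0)$ the integrand converges pointwise to $p_{t/2}(x_0,z)\,p_{t/2}(z,y_0)$ by separate continuity of $p_{t/2}$, and is dominated by the constant $M(t/2)^2$ which is $m$-integrable on $B$; dominated convergence then delivers joint continuity. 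The main subtlety throughout is the bookkeeping needed to transfer $m$-a.e.\ relations (symmetry, Chapman-Kolmogorov) to pointwise relations on all of $B\times B$; the half-step regularization $p_t=\mathcal{P}_{t/2}[\hat p_{t/2}(\cdot,y)]$ together with the full-support hypothesis $\supp m = B$ resolves this systematically.
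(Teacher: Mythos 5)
Your proposal is correct and takes essentially the same route as the paper, which simply defers to \cite[Theorem 2.1]{Barlow99non-localdirichlet}: the half-step regularization through $\mathcal{P}_{t/2}$, the Feller-plus-ultracontractivity upgrade $L^1(B,m)\to C_\infty(B)$, and the dominated-convergence pass from separate to joint continuity are exactly the BBCK argument. The only step worth tightening is your ``symmetric manipulation'' for continuity of $y\mapsto p_t(x,y)$: it is cleanest to define $p_t(x,y):=\int_B \hat p_{t/2}(x,z)\,\hat p_{t/2}(y,z)\,m(dz)$, so that each slot is $\mathcal{P}_{t/2}$ acting on a fixed $L^1$ function and separate continuity and pointwise symmetry come for free, while a.e.\ symmetry of $\hat p_{t/2}$ together with Chapman--Kolmogorov identifies this with $\hat p_t$ $(m\otimes m)$-a.e.
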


We see that if we choose $m(dx)= \theta(x) dx$ and we assume \ref{ass:b.1}, \ref{ass:b.2}, \ref{ass:b.3} we immediately get the existence of a transition kernel $p^{B,\theta}_t(x,y)$ for the semigroup $\mathcal{P}_t^{B,\theta}$, jointly continuous in $x,y\in B$. Indeed assumption \eqref{ass:ultracontractivity} is easily satisfied by \ref{ass:b.3}. In the next proposition we prove the existence of a transition kernel $p_t^\theta(x,y)$ for the semigroup $\mathcal{P}^\theta_t$ of $\mathbf{M}^\theta$ by a localization argument.

\begin{proposition} Assume \ref{ass:b.3} and $\theta,\theta^{-1}\in L^\infty_{loc}(\R^d)$. Consider the semigroup $\mathcal{P}_t^\theta$ associated to the minimal diffusion $\mathbf{M}^\theta$. Then, there exists a transition kernel $p_t^{\theta}(x,y)$ defined on $(0,\infty)\times \R^d\times \R^d$ associated to $\mathcal{P}_t^\theta$,
\[
\mathcal{P}_t^\theta f(x)=\int_{\R^d}f(y)p_t^{\theta}(x,y)\theta(y)\,dy,\quad \forall x\in\R^d,\,t>0.
\]
Moreover, for all $t>0$ and $x,y\in \R^d$
\[
 p^{B_R,\theta}_t(x,y) \nearrow  p^\theta_t(x,y),\quad R\to\infty,
\]
being the limit increasing in $R$.
\end{proposition}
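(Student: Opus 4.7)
The plan is to localize on balls and pass to the limit by a monotonicity argument. First, for each $R>0$, I would construct a jointly continuous transition kernel $p_t^{B_R,\theta}(x,y)$ for the process killed upon exiting $B_R$ by invoking Theorem \ref{jointcont}. Assumption \ref{ass:b.3}, together with $\theta,\theta^{-1}\in L^\infty_{loc}(\R^d)$, makes the Dirichlet form $(\E,\F^\theta_{B_R})$ effectively uniformly elliptic on $B_R$, so classical Nash--Moser estimates supply an ultracontractivity bound $\|\mathcal{P}_t^{B_R,\theta}f\|_\infty\leq M_R(t)\|f\|_{L^1(B_R,\theta dx)}$ of the form required in \eqref{ass:ultracontractivity}. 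Combined with the Feller property granted by Theorem \ref{thm:min_diff}, this produces a positive symmetric kernel $p_t^{B_R,\theta}(x,y)$, jointly continuous on $B_R\times B_R$, such that $\mathcal{P}_t^{B_R,\theta} f(x)=\int_{B_R} f(y) p_t^{B_R,\theta}(x,y)\theta(y)\,dy$.

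Next, I would establish the monotonicity: for $R<R'$ and $f\in C_c(B_R)$, $f\geq 0$, the pathwise inequality $\tau_{B_R}\leq \tau_{B_{R'}}$ yields $\mathcal{P}_t^{B_R,\theta}f(x)\leq \mathcal{P}_t^{B_{R'},\theta}f(x)$ for every $x$, i.e.
\[
\int_{B_R} f(y)\bigl[p_t^{B_{R'},\theta}(x,y)-p_t^{B_R,\theta}(x,y)\bigr]\theta(y)\,dy \geq 0.
\]
Varying $f$ over non-negative test functions and using the joint continuity of both kernels in $y$ on $B_R$ (and the fact that $\theta>0$ a.e.) promotes this to the pointwise bound $p_t^{B_R,\theta}(x,y)\leq p_t^{B_{R'},\theta}(x,y)$ on $B_R\times B_R$. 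I would then define $p_t^\theta(x,y):=\lim_{R\to\infty}p_t^{B_R,\theta}(x,y)\in[0,\infty]$. On the probabilistic side $\{t<\tau_{B_R}\}\nearrow\{t<\zeta^\theta\}$ as $R\to\infty$, so monotone convergence gives $\mathcal{P}_t^{B_R,\theta}f(x)\nearrow \mathcal{P}_t^\theta f(x)$ for every bounded $f\geq 0$; a second monotone convergence applied to the kernel representation identifies the limit as $\mathcal{P}_t^\theta f(x)=\int_{\R^d} f(y) p_t^\theta(x,y)\theta(y)\,dy$, while $\mathcal{P}_t^\theta 1(x)\leq 1$ forces $p_t^\theta(x,\cdot)<\infty$ a.e.

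The main obstacle I anticipate is the second step: promoting the integrated inequality between the localized semigroups to a genuine pointwise comparison of their densities. This is precisely where the joint continuity provided by Theorem \ref{jointcont} pays off, and it is also what lets us interpret $p_t^\theta$ as a well-defined kernel on all of $\R^d\times \R^d$ rather than as an abstract $L^1$ density.
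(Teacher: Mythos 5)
Your proposal is correct and is essentially a self-contained reconstruction of the paper's argument. The paper's own proof simply notes that $\mathcal{P}_t^{B,\theta}$ satisfies \eqref{ass:ultracontractivity} for every ball $B$ (i.e.\ $\mathcal{P}_t^\theta$ is locally ultracontractive) and then cites Theorem 2.12 of \cite{grigor'yan2012}, which is precisely the construction you carry out by hand: jointly continuous local kernels from Theorem \ref{jointcont}, pointwise monotonicity of $p_t^{B_R,\theta}$ in $R$ via domain monotonicity of the killed semigroups, and a monotone-convergence passage to the global kernel.
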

\begin{proof} The proof comes from the the fact that for all balls $B\subset\R^d$ the semigroup $\mathcal{P}_t^{B,\theta}$ satisfies \eqref{ass:ultracontractivity}, which means that $\mathcal{P}_t^{\theta}$ is locally ultracontractive and from Theorem 2.12 of \cite{grigor'yan2012}.
\end{proof}

As a further consequence of assumption \ref{ass:b.3}, more precisely from the fact that $\lambda$ is locally bounded from below we can prove that $\mathbf{M}^\theta$ is an irreducible process.

\begin{proposition}\label{prop:irr} Assume \ref{ass:b.3} and assume $\theta^{-1},\theta\in L^\infty_{loc}(\R^d)$. Then the process $\mathbf{M}^\theta$ is irreducible.
\end{proposition}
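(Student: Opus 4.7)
The plan is to establish irreducibility by first showing that the global transition density $p_t^\theta(x,y)$ is strictly positive everywhere on $(0,\infty)\times\R^d\times\R^d$, and then deducing that no non-trivial invariant Borel set can exist.

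The key observation is that assumption \ref{ass:b.3}, combined with the hypothesis $\theta,\theta^{-1}\in L^\infty_{loc}(\R^d)$, makes the operator \emph{uniformly elliptic} on every fixed ball $B\subset\R^d$: there exists $c_B>0$ so that $c_B^{-1}\leq \lambda,\,\theta\leq \Lambda,\,\theta^{-1}\leq c_B$ almost everywhere on $B$. First, I would invoke the classical Aronson--Nash lower bound for symmetric uniformly elliptic divergence-form operators with bounded measurable coefficients, applied to $(\E,\F_B^\theta)$ on $L^2(B,\theta dx)$, to conclude that the killed transition density satisfies $p_t^{B,\theta}(x,y)>0$ for all $t>0$ and $x,y\in B$. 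Combined with the monotone convergence $p_t^{B_R,\theta}(x,y)\nearrow p_t^\theta(x,y)$ from the preceding proposition, this gives $p_t^\theta(x,y)>0$ whenever $x,y$ lie in a common ball.

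Second, to upgrade to arbitrary $x,y\in\R^d$, I would use Chapman--Kolmogorov together with the joint continuity of $p_t^\theta$ from Theorem \ref{jointcont}. Given $x,y$, connect them by a finite chain of open balls $B_1,\ldots,B_n$ with $x\in B_1$, $y\in B_n$, and $B_i\cap B_{i+1}\neq\emptyset$; by the previous step and continuity, for each $i$ there is an open subset of $B_i\cap B_{i+1}$ on which $p_{t/n}^\theta$ is bounded below by a positive constant between the base points of consecutive balls. Iterating Chapman--Kolmogorov along the chain yields $p_t^\theta(x,y)>0$ for every $t>0$ and every $x,y\in\R^d$.

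Finally, strict positivity of $p_t^\theta$ implies irreducibility in the standard way. Suppose $A\subset\R^d$ is a Borel set invariant under $\mathcal{P}_t^\theta$ with $0<\int_A\theta\,dx<\int_{\R^d}\theta\,dx$. For any $x$ in the positive-measure complement of $A$, one would have
\[
\mathcal{P}_t^\theta 1_A(x)=\int_A p_t^\theta(x,y)\theta(y)\,dy>0,
\]
contradicting $\mathcal{P}_t^\theta 1_A=1_A$ almost everywhere; hence no such $A$ exists.

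The main obstacle is the second step: propagating strict positivity from the ``same ball'' case to arbitrary $x,y$ without quantitative heat kernel bounds. Everything else is either a direct consequence of classical theory for uniformly elliptic divergence-form operators on bounded domains, or routine bookkeeping with the measure-theoretic definition of irreducibility.
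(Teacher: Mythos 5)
Your approach is valid but quite different from the paper's, which dispatches the proposition in one line by citing Corollary 4.6.4 of Fukushima--Oshima--Takeda \cite{fukushima1994dirichlet}: that result gives irreducibility for strongly local regular Dirichlet forms that are, on every compact set, comparable to the classical Dirichlet integral, which is exactly what $\ref{ass:b.3}$ and $\theta,\theta^{-1}\in L^\infty_{loc}$ deliver. You instead reconstruct the conclusion from scratch via strict positivity of the transition density, which is more concrete and more elementary but carries two imprecisions worth flagging. First, the ``Aronson--Nash lower bound'' in its usual form is a Gaussian lower bound on the \emph{whole-space} kernel; the Dirichlet kernel $p_t^{B,\theta}(x,y)$ vanishes at $\partial B$ and does not satisfy an Aronson-type bound uniformly on $B$. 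What you actually need is the weaker and standard fact that the killed heat kernel of a uniformly elliptic divergence-form operator on a bounded domain is strictly positive at interior points for all $t>0$ (a consequence of the parabolic Harnack inequality or the strong maximum principle), so you should cite that instead. Second, your Chapman--Kolmogorov chaining step is redundant: any two points $x,y\in\R^d$ already lie in a single large ball $B_R$, so the positivity of $p_t^\theta(x,y)$ follows immediately from $p_t^\theta\geq p_t^{B_R,\theta}>0$ without any chaining along overlapping balls. With those two corrections your argument closes cleanly; the paper's citation-based route is shorter but yours makes the mechanism explicit, which could be pedagogically preferable.
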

\begin{proof} It follows immediately from Corollary 4.6.4. in \cite{fukushima1994dirichlet}.
\end{proof}

In the next theorem we clarify the relation between $\mathbf{M}$ and $\mathbf{M}^\theta$, namely, we show that $\mathbf{M}^\theta$ can be obtained by $\mathbf{M}$ through a time change.

\begin{theorem}[Time change]\label{thm:time_change} Assume \ref{ass:b.3} and assume $\theta^{-1},\theta\in L^\infty_{loc}(\R^d)$. Define $\hat{\mathbf{M}}=(\hat{X}_t,\PR_x)$ by
\[
\hat{X}_t\defeq X_{\tau_t},\quad\tau_t=\inf\{s>0;\int_0^s\theta(X_u)\,du>t\},
\]
then $\hat{\mathcal{P}}_tf(x)=\mean_x[f(X_{\tau_t})]=\mathcal{P}_t^\theta f(x)$ for almost all $x\in\R^d$, $t>0$ and $f:\R^d\to\R$ positive and measurable.
\end{theorem}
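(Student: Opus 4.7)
The statement is a classical time-change theorem in Dirichlet form theory, so my plan is to identify this as an application of Fukushima's time-change formalism (e.g.\ Chapter 6 of \cite{fukushima1994dirichlet}) and then use the uniqueness part of Theorem \ref{thm:min_diff} to identify the time-changed process with $\mathbf{M}^\theta$.

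First I would check that the Radon measure $\nu(dx) \defeq \theta(x)\,dx$ is a smooth measure with respect to the minimal diffusion $\mathbf{M}$ associated to $(\E,\F)$ on $L^2(\R^d,dx)$. Since $\theta\in L^\infty_{loc}(\R^d)$, the measure $\nu$ puts no mass on Lebesgue-null sets, and, because $\lambda^{-1},\Lambda\in L^\infty_{loc}$ from \ref{ass:b.3}, every Lebesgue-null set is exceptional for $(\E,\F)$; hence $\nu$ charges no $\E$-polar set. Standard Revuz correspondence then gives that the positive continuous additive functional
\[
A_t \defeq \int_0^t \theta(X_s)\,ds
\]
is well defined (and finite for all $t>0$ $\PR_x$-a.s.\ for every starting point $x$, using $\theta\in L^\infty_{loc}$ and the non-explosion/irreducibility properties established in Theorem \ref{thm:min_diff} and Proposition \ref{prop:irr}) and has Revuz measure exactly $\nu$. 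Its right-continuous inverse $\tau_t = \inf\{s>0 : A_s > t\}$ is well defined for every starting point because $\theta^{-1}\in L^\infty_{loc}$ forces $A_t\to\infty$ $\PR_x$-a.s.

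Next I invoke Fukushima's time-change theorem: the process $\hat{\mathbf{M}} = (\hat X_t,\PR_x)$ with $\hat X_t = X_{\tau_t}$ is a $\nu$-symmetric Hunt process whose associated Dirichlet form on $L^2(\R^d,\theta\,dx)$ is the so-called trace Dirichlet form, which in this case, since $\nu$ has full quasi-support $\R^d$ (again using $\theta^{-1}\in L^\infty_{loc}$), coincides with $(\E,\F^\theta)$. In particular $\hat{\mathbf{M}}$ is associated to $(\E,\F^\theta)$ on $L^2(\R^d,\theta dx)$.

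Finally I would use the uniqueness statement in Theorem \ref{thm:min_diff} (the minimal diffusion associated to $(\E,\F^\theta)$ is unique as a standard process starting from every $x\in\R^d$ and satisfying the stated resolvent regularity) to conclude $\hat{\mathbf{M}} = \mathbf{M}^\theta$, whence their semigroups coincide:
\[
\hat{\mathcal{P}}_t f(x) = \mean_x[f(X_{\tau_t})] = \mathcal{P}_t^\theta f(x)
\]
for every bounded measurable $f$ and every $x\in\R^d$ (off a capacity-zero set, which is empty by \ref{ass:b.3}), giving in particular the almost everywhere identification claimed. The main technical obstacle is the handling of exceptional sets: one must verify that the Revuz correspondence and the time-change procedure can be performed starting from every $x\in\R^d$, not just quasi-everywhere, and this is exactly where the local boundedness assumption \ref{ass:b.3} together with $\theta^{\pm 1}\in L^\infty_{loc}$ are used to remove all ambiguity coming from Dirichlet-form exceptional sets.
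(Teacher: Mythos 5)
Your proposal is essentially the same as the paper's: the paper's proof simply cites Fukushima–Oshima–Takeda Theorem~6.2.1 (the time-change theorem), and you unpack exactly the ingredients behind it (smoothness of $\nu=\theta\,dx$ via the fact that capacity-zero sets are Lebesgue-null, Revuz correspondence for $A_t=\int_0^t\theta(X_s)\,ds$, full quasi-support from $\theta^{-1}\in L^\infty_{loc}$, and identification of the trace form with $(\E,\F^\theta)$). One small correction: assumption \ref{ass:b.3} does not make capacity-zero sets empty; rather, it lets one choose a version of the process defined for every starting point, and in any case the theorem only asserts equality for a.e.\ $x$, which is what Theorem~6.2.1 directly delivers, so your final a.e.\ conclusion is the right one.
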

\begin{proof} According to Theorem 6.2.1 of \cite{fukushima1994dirichlet},  $\hat{\mathcal{P}}_tf(x)=\mathcal{P}_t^\theta f(x)$ coincide for almost all $x\in\R^d$ and $t>0$.
\end{proof}

There is a natural time change $\theta:\R^d\to\R_{\geq0}$ which makes the process $\mathbf{M}^\theta$ conservative. Namely we pick $\theta\equiv \Lambda$. The condition we give will be suitable in the setting of Ergodic environment, and in particular, is a consequence of \ref{ass:b.2}.

\begin{proposition}\label{prop:explosion} Assume that
\[
\limsup_{R\to\infty} \frac{1}{|B_R|}\int_{B_R} \Lambda(x)\,dx<\infty.
\]
Then the process $\mathbf{M}^\Lambda$ is conservative.
\end{proposition}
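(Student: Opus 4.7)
The plan is to apply Sturm's volume-growth conservativeness criterion for strongly local regular Dirichlet forms: $\mathbf{M}^\Lambda$ is conservative provided that the intrinsic metric $\rho$ of $(\E,\F^\Lambda)$ on $L^2(\R^d,\Lambda dx)$ is complete, induces the Euclidean topology, and the intrinsic-ball volume $V(R)=\int_{B_\rho(x_0,R)}\Lambda\,dx$ satisfies $\int_1^\infty r/\log V(r)\,dr=\infty$.

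First I would compute $\rho$, defined by
\[
\rho(x,y)=\sup\{u(x)-u(y):u\in\F^\Lambda_{loc}\cap C(\R^d),\ \langle a\nabla u,\nabla u\rangle\leq \Lambda\text{ a.e.}\}.
\]
By the upper bound in \ref{ass:b.1}, every Euclidean $1$-Lipschitz function is admissible since $\langle a\nabla u,\nabla u\rangle\leq \Lambda|\nabla u|^2\leq\Lambda$; testing with linear functions yields $\rho(x,y)\geq|x-y|$, so $B_\rho(x_0,R)\subseteq B(x_0,R)$. Conversely, the lower bound in \ref{ass:b.1} forces admissible $u$ to satisfy $|\nabla u|^2\leq \Lambda/\lambda$ a.e., and by \ref{ass:b.3} the ratio $\Lambda/\lambda$ is locally bounded, so integration along straight segments gives $\rho(x,y)\leq C_K|x-y|$ on any compact $K$. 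Hence $\rho$ generates the Euclidean topology, and completeness of $(\R^d,\rho)$ is immediate from $\rho\geq|\cdot|$.

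Next I would use the standing hypothesis to control $V(R)$. Combining the inclusion $B_\rho(x_0,R)\subseteq B(x_0,R)$ with the assumption, for every sufficiently large $R$
\[
V(R)\ \leq\ \int_{B(x_0,R)}\Lambda(y)\,dy\ \leq\ C R^d.
\]
In particular $\log V(R)=O(\log R)$, so the Sturm integral test $\int_1^\infty r/\log V(r)\,dr=\infty$ is trivially satisfied. Sturm's theorem then concludes conservativeness of $\mathbf{M}^\Lambda$.

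The main obstacle is the verification that the intrinsic metric $\rho$ is complete and induces the Euclidean topology: this is precisely where \ref{ass:b.3} enters the argument, because without a local Lipschitz comparison between $\rho$ and $|\cdot|$ the intrinsic geometry could be degenerate and Sturm's criterion inapplicable. Once this geometric setup is in place, the polynomial volume bound extracted from the hypothesis reduces the integral test to triviality, and the proof is complete.
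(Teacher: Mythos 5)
Your proposal is correct, and it is a close cousin of the argument the paper intends, but not the same one. The paper's proof is a one-line citation of Theorem~5.7.3 in Fukushima--Oshima--Takeda, which is a volume-growth conservativeness criterion phrased in terms of a single test function: one takes a nonnegative continuous $u \in \F^\Lambda_{loc}$ with relatively compact sublevel sets, checks that the energy-measure density $\langle a\nabla u,\nabla u\rangle$ is dominated by the reference measure $m = \Lambda\,dx$, and then verifies the growth condition on $m(\{u<r\})$. Applied here, one simply takes $u(x)=|x|$; then $\langle a\nabla u,\nabla u\rangle \leq \Lambda|\nabla u|^2 = \Lambda$ by \ref{ass:b.1}, and $m(\{u<r\})=\int_{B_r}\Lambda\,dx\leq Cr^d$ for large $r$ by the standing hypothesis, so the integral test is trivially satisfied. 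You instead invoke Sturm's intrinsic-metric formulation, which forces you to establish that the intrinsic metric $\rho$ is complete and generates the Euclidean topology. That additional geometric verification is where you bring in \ref{ass:b.3} (to get the local Lipschitz upper bound $\rho\leq C_K|\cdot|$). The FOT test-function route avoids this entirely: one never needs the two-sided comparison of $\rho$ with $|\cdot|$, because one only needs the single admissible function $|x|$, and the lower bound $\lambda$ (hence \ref{ass:b.3}) plays no role. Your version is therefore slightly heavier machinery than necessary, but it is logically sound and lands on the same polynomial volume bound; both are special cases of the Grigor'yan--Takeda--Sturm circle of ideas.

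One small inaccuracy worth flagging: you assert that completeness of $(\R^d,\rho)$ is ``immediate'' from $\rho\geq|\cdot|$. That inequality alone shows a $\rho$-Cauchy sequence is Euclidean-Cauchy and hence Euclidean-convergent; to conclude that the Euclidean limit is also the $\rho$-limit you must additionally use either the lower semicontinuity of $\rho$ in the Euclidean topology (which follows from the admissible functions being continuous) or the local upper bound $\rho\leq C_K|\cdot|$. The conclusion is right, but ``immediate'' understates the step.
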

\begin{proof} The proof is an application of Theorem 5.7.3 of \cite{fukushima1994dirichlet}.
\end{proof}

\section{Diffusions in Random Environment}\label{sec:diffre}

\subsection{Construction of the Process in Random Environment}
By a stationary and ergodic random environment $(\Omega,\mathcal{G},\mu,\{\tau_x\}_{\R^d})$, we mean a probability space $(\Omega,\mathcal{G},\mu)$ on which is defined a group of transformations $\{\tau_x\}_{x\in\R^d}$ acting on $\Omega$ such that
\begin{itemize}
\item[(i)] $\mu(\tau_x A) = \mu(A)$ for all $A\in\mathcal{G}$ and any $x\in\R^d$;
\item[(ii)] if $\tau_x A = A$ for all $x\in\R^d$, then $\mu(A)\in\{0,1\}$;
\item[(iii)] the function $(x,\omega)\to \tau_x\omega$ is $\mathcal{B}(\R^d)\otimes\mathcal{G}$-measurable.
%\item[(iv)] The group $\{\tau_x\}_{x\in\R^d}$ is stochastically continuous, i.e. for any bounded  $\mathcal{G}$-measurable function $u:\Omega\to\R$,
%for any $\delta>0$,
%\[
%\lim_{x\to0}\mu(|u(\tau_x\omega)-u(\omega)|\geq \delta)=0.
%\]
\end{itemize}
Let us consider the following bilinear form
\begin{equation*}
\Ew(u,v)\defeq \sum_{i,j}\int_{\R^d} a^\omega_{ij}(x)\partial_i u(x)\partial_j v(x) dx,\quad u,v\in C_0^\infty(\R^d),
\end{equation*}
where $a^\omega_{ij}(x)$ satisfies \ref{ass:a.1}, \ref{ass:a.2} and \ref{ass:a.3} of Section 1.

Throughout this section we will look at two Dirichlet forms determined by $\E^\omega$ above. One is the Dirichlet form $(\E^\omega,\F^{\omega})$ on $L^2(\R^d,dx)$ where $\F^\omega$ is the completion of $C_0^\infty(\R^d)$ in $L^2(\R^d,dx)$ with respect to $\E^\omega_1 := \E^\omega+(\cdot,\cdot)$. The second is the Dirichlet form $(\E^\omega,\F^{\Lambda,\omega})$ on $L^2(\R^d,\Lambda^\omega dx)$ where $\F^{\Lambda,\omega}$ is the completion of $C_0^\infty(\R^d)$ in $L^2(\R^d,\Lambda^\omega dx)$ with respect to $\E^\omega_1 := \E^\omega+(\cdot,\cdot)_\Lambda$.

We have already observed that \ref{ass:a.1}, \ref{ass:a.2} and \ref{ass:a.3} imply \ref{ass:b.1}, \ref{ass:b.2} and \ref{ass:b.3} of Section \ref{sec:sobmos}, for $\mu$-almost all $\omega\in\Omega$.
In particular, by Theorem \ref{thm:min_diff}, we have  the existence, for $\mu$-almost all $\omega\in\Omega$, of two minimal diffusion processes,
$\mathbf{M}^{\omega} = (X_t^{\omega},\PR_x^{\omega},\zeta^\omega)$ and $\mathbf{M}^{\Lambda,\omega} = (X_t^{\Lambda,\omega},\PR_x^{\Lambda,\omega})$, respectively associated to $(\E^\omega,\F^{\omega})$ and $(\E^\omega,\F^{\Lambda,\omega})$. Denote by $\mathcal{P}_t^\omega$ the semigroup associated to $\mathbf{M}^\omega$ and by $p_t^\omega(x,y)$ its transition kernel with respect to $dx$. Analogously, denote by $\mathcal{Q}_t^\omega$ the semigroup associated to $\mathbf{M}^{\Lambda,\omega}$ and by $q_t^\omega(x,y)$ its transition kernel with respect to $\Lambda^\omega(x)dx$.

\begin{lemma}[Translation Property for killed process] Fix a ball $B\subset\R^d$. Then for $\mu$-almost all $\omega\in\Omega$
\begin{align}\label{eq:homogeneity}
p_t^{B-z,\tau_z\omega}(x-z,y-z)&=p_t^{B,\omega}(x,y),\\q_t^{B-z,\tau_z\omega}(x-z,y-z)&=q_t^{B,\omega}(x,y),\notag
\end{align}
for all $t\geq0$, $x,y\in B$ and $z\in\R^d$.
\end{lemma}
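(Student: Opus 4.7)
The plan is to reduce the identity to a change-of-variables statement on Dirichlet forms and then transfer it to transition kernels using the uniqueness of the associated Feller semigroups together with the joint continuity from Theorem \ref{jointcont}.

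First I would record the pointwise cocycle identity that drives everything: for any $z\in\R^d$ and $\mu$-a.e.\ $\omega$,
\[
a^{\tau_z\omega}(x)=a(\tau_x\tau_z\omega)=a(\tau_{x+z}\omega)=a^\omega(x+z),
\]
and likewise $\Lambda^{\tau_z\omega}(x)=\Lambda^\omega(x+z)$. Define the translation operator $T_z u(x)\defeq u(x+z)$. Since Lebesgue measure is translation invariant, $T_z$ is a unitary map $L^2(B,dx)\to L^2(B-z,dx)$, and since $\Lambda^{\tau_z\omega}(x)\,dx$ is the push-forward under $x\mapsto x-z$ of $\Lambda^\omega(x)\,dx$, $T_z$ is also a unitary map $L^2(B,\Lambda^\omega dx)\to L^2(B-z,\Lambda^{\tau_z\omega}dx)$. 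A direct change of variables using the cocycle identity gives, for $u,v\in C_0^\infty(B)$,
\[
\E^{\tau_z\omega}(T_z u,T_z v)
=\sum_{i,j}\int a^\omega_{ij}(y)\,\partial_i u(y)\,\partial_j v(y)\,dy
=\E^\omega(u,v),
\]
and $T_z$ sends $C_0^\infty(B)$ bijectively onto $C_0^\infty(B-z)$. Taking closures, $T_z$ is a unitary equivalence between the Dirichlet forms $(\E^\omega,\F^\omega_B)$ and $(\E^{\tau_z\omega},\F^{\tau_z\omega}_{B-z})$, and the analogous statement for $\F^{\Lambda,\omega}_B$ and $\F^{\Lambda,\tau_z\omega}_{B-z}$.

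Next, the spectral/functional calculus then gives the intertwining of the sub-Markovian semigroups of the forms killed on leaving $B$:
\[
T_z\,\mathcal{P}_t^{B,\omega}=\mathcal{P}_t^{B-z,\tau_z\omega}\,T_z,\qquad
T_z\,\mathcal{Q}_t^{B,\omega}=\mathcal{Q}_t^{B-z,\tau_z\omega}\,T_z,
\]
valid on $L^2$. Writing each semigroup through its kernel (which exists and is jointly continuous on $B\times B$ by Theorem \ref{jointcont}, applicable since \ref{ass:b.3} yields the ultracontractive bound \eqref{ass:ultracontractivity} for $\mu$-a.e.\ $\omega$) and testing against an arbitrary $f\in C_c(B)$, one obtains, for a.e.\ $(x,y)\in(B-z)\times(B-z)$,
\[
p_t^{B,\omega}(x+z,y+z)=p_t^{B-z,\tau_z\omega}(x,y),
\]
and analogously for $q_t$ on $(B-z)\times(B-z)$ with respect to the measure $\Lambda^{\tau_z\omega}(y)\,dy$. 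Substituting $x\mapsto x-z$, $y\mapsto y-z$ gives \eqref{eq:homogeneity} almost everywhere.

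Finally I would upgrade ``a.e.'' to ``everywhere'' using joint continuity: both sides of \eqref{eq:homogeneity} are jointly continuous in $(x,y)\in B\times B$, hence their equality on a dense set forces equality everywhere. The main subtlety—and the step I would be most careful with—is the bookkeeping of null sets: the cocycle identity $a^{\tau_z\omega}(\cdot)=a^\omega(\cdot+z)$ holds for $\mu$-a.e.\ $\omega$ for each fixed $z$, and the sets on which \ref{ass:a.1}--\ref{ass:a.3} hold are $\{\tau_z\}$-invariant by the stationarity of $\mu$, so there is a single $\mu$-full measure set $\Omega_0$ on which the construction of $\mathbf{M}^\omega$ and $\mathbf{M}^{\Lambda,\omega}$, their transition kernels, and the identity above hold simultaneously for all $z\in\R^d$ and all $t>0$, $x,y\in B$. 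Once this null-set issue is handled, the continuity argument closes the proof for both $p_t^{B,\omega}$ and $q_t^{B,\omega}$ uniformly.
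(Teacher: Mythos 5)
Your proof is correct and follows essentially the same route as the paper: establish that the translation $T_z$ is a unitary equivalence between the Dirichlet forms (the paper phrases this at the level of the resolvents $G_\alpha^{B,\omega}$, you phrase it via spectral calculus, but this is the same fact), transfer the resulting semigroup intertwining to the kernels almost everywhere, and then upgrade to everywhere using the joint continuity of $p_t^{B,\omega}$ and $q_t^{B,\omega}$ from Theorem~\ref{jointcont} (the paper additionally invokes the Feller property at the semigroup stage, which is equivalent bookkeeping). One small remark: the identity $a^{\tau_z\omega}(x)=a^\omega(x+z)$ is exact for every $\omega$ by the group property of $\{\tau_x\}$ and the definition $a^\omega(x)=a(\tau_x\omega)$, not merely $\mu$-a.e.; the only null sets that need tracking are those coming from \ref{ass:a.1}--\ref{ass:a.3}, which are $\tau$-invariant as you observe.
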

\begin{proof}
We prove property \eqref{eq:homogeneity} only for the semigroup $\mathcal{Q}^{B,\omega}_t$, being the other equivalent. It is known in \cite{fukushima1994dirichlet} that the resolvent $G_\alpha^{B,\omega}$ is uniquely determined by the following equation
\[
\E^\omega_\alpha(G^{B,\omega}_\alpha f,v)=\int_B f(x)v(x) \Lambda(x;\omega)\,dx,
\]
for all $f\in L^2(B)$, $v\in W^2_0(B)$.
On the other hand
\begin{align*}
\E^\omega_\alpha(G^{B,\omega}_\alpha f,v) &=\int_{B-z} f(x+z)v(x+z) \Lambda(x;\tau_z\omega)\,dx\\
	&=\E^{\tau_z\omega}_\alpha([G^{B-z,\tau_z\omega}_\alpha f(\cdot+z)],v(\cdot+z))\\
	&= \E^\omega_\alpha([G^{B-z,\tau_z\omega}_\alpha f(\cdot+z)](\cdot-z),v),
\end{align*}
for all $f\in L^2(B)$, $v\in W^2_0(B)$. Hence, for $\mu$-almost all $\omega\in\Omega$
\[
[G^{B-z\tau_z\omega}_\alpha f(\cdot+z)](x-z)=G^{B,\omega}_\alpha f(x),\quad \mbox{a.a }x\in B, \forall z\in\R^d.
\]
Moving from the resolvent to the semigroup we get the relation
\[
[\mathcal{Q}^{B-z,\tau_z\omega}_t f(\cdot+z)](x-z)=\mathcal{Q}^{B,\omega}_t f(x),
\]
for all $f\in C_\infty(B)$. The equality is true for all $x\in B$ and for all $z\in\R^d$ by the Feller property, $\mu$-almost surely.
Finally it is easy to derive the equality for the transition kernel and get
\begin{equation}\label{eq:hom}
q_t^{B-z,\tau_z\omega}(x-z,y-z)=q_t^{B,\omega}(x,y),
\end{equation}
for all $z\in\R^d$, and almost all $x,y\in B$, $\mu$-almost surely. Using the joint continuity of $q_t^{B,\omega}(x,y)$ in $x$ and $y$ (cf. $(iv)$ Theorem  \ref{jointcont}) we get \eqref{eq:hom} for all $z\in\R^d$, $x,y\in B$, $\mu$-almost surely.
\end{proof}

\begin{lemma}[Translation Property] For $\mu$-almost all $\omega\in\Omega$
\begin{align}\label{eq:homogeneity2}
p_t^{\tau_z\omega}(x-z,y-z)&=p_t^{\omega}(x,y),\\q_t^{\tau_z\omega}(x-z,y-z)&=q_t^{\omega}(x,y),\notag
\end{align}
for all $t\geq 0$ and $x,y,z\in\R^d$
\end{lemma}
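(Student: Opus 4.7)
The plan is to pass from the killed process on a ball, for which the translation identity \eqref{eq:homogeneity} has just been established, to the full process via the monotone exhaustion already recorded in the previous proposition; concretely, $p_t^{B_R,\omega}(x,y)\nearrow p_t^\omega(x,y)$ as $R\to\infty$, and likewise for $q_t^\omega$. Fix $t>0$, points $x,y,z\in\R^d$, and write only the proof for $p_t^\omega$, the case of $q_t^\omega$ being identical.

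Choose $R$ large enough so that $x,y\in B_R$; the corresponding translated ball $B_R-z$ is centered at $-z$ with radius $R$ and contains $x-z$, $y-z$. The previous lemma, applied to the killed kernel on $B_R$, gives
\[
p_t^{B_R-z,\tau_z\omega}(x-z,y-z)\;=\;p_t^{B_R,\omega}(x,y)
\]
on a single $\mu$-full measure event (valid simultaneously for every $z\in\R^d$, as stated there). Letting $R\to\infty$, the right-hand side converges monotonically to $p_t^\omega(x,y)$ by the previous proposition. For the left-hand side, $\{B_R-z\}_{R>|z|}$ is an exhausting sequence of relatively compact open sets for $\R^d$; sandwiching it between the two centered balls $B_{R-|z|}\subset B_R-z\subset B_{R+|z|}$ and applying the monotone convergence from the previous proposition (with environment $\tau_z\omega$) yields
\[
p_t^{B_R-z,\tau_z\omega}(x-z,y-z)\;\xrightarrow[R\to\infty]{}\;p_t^{\tau_z\omega}(x-z,y-z).
\]
Passing to the limit in the displayed identity gives \eqref{eq:homogeneity2} for the fixed $t,x,y,z$ on a $\mu$-full event.

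Because the previous lemma already provides a single null set valid for all $z$ and all $x,y\in B_R$, and because both sides of \eqref{eq:homogeneity2} are jointly continuous in $(x,y)$ by property $(iv)$ of Theorem \ref{jointcont}, the identity extends simultaneously to all $t>0$, $x,y,z\in\R^d$ on one $\mu$-full measure event (the $t=0$ case being trivial). The argument for $q_t^\omega$ uses the corresponding monotone limit for the killed kernels $q_t^{B_R-z,\tau_z\omega}$, which is furnished by the same proposition applied with weight $\theta=\Lambda^\omega$. The only delicate point is the comparability of the two exhausting sequences $\{B_R\}$ and $\{B_R-z\}$, but this is immediate from the sandwich $B_{R-|z|}\subset B_R-z\subset B_{R+|z|}$, so no real obstacle arises.
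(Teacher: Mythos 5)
Your proposal is correct and follows essentially the same route as the paper: pass to the limit in the killed-kernel translation identity using the monotone exhaustion $p_t^{B,\theta}\nearrow p_t^\theta$. The only addition is your explicit sandwich argument $B_{R-|z|}\subset B_R-z\subset B_{R+|z|}$ to justify that $\{B_R-z\}$ is also an exhausting family with the same limit kernel, a point the paper takes for granted when it writes $\lim_{n}p_t^{B_n-z,\tau_z\omega}(x-z,y-z)=p_t^{\tau_z\omega}(x-z,y-z)$.
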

\begin{proof}
It follows from the previous lemma, passing to the limit. Namely, take an increasing sequence of balls $B_n\uparrow \R^d$, then we have
\begin{align*}
p_t^{\tau_z\omega}(x-z,y-z) & = \lim_{n\to\infty}p_t^{B_n-z,\tau_z\omega}(x-z,y-z)\\& =\lim_{n\to\infty}p_t^{B_n,\omega}(x,y)=p_t^{\omega}(x,y).
\end{align*}
\end{proof}

\subsection{Environment Process}

We shall first construct the environment process for $\mathbf{M}^{\Lambda.\omega}=(X^{\Lambda,\omega}_t,\PR^{\Lambda,\omega}_x)=:(Y^{\omega}_t,\mathbb{Q}^{\omega}_x)$, $x\in\R^d$, since we know that it is conservative $\mu$-almost surely by Proposition \ref{prop:explosion}. From this construction and the Ergodic theorem we will prove that also the process $\mathbf{M}^\omega$ is conservative $\mu$-almost surely.

For a fixed $\omega\in\Omega$, we define a stochastic process on $\Omega$ by
\[
\eta_t^\omega(\tilde{\omega})\defeq \tau_{Y_t^\omega(\tilde{\omega})}\omega,\quad t\geq 0
\]
where $\tilde{\omega}$ is a point of the sample space of the diffusion $\mathbf{M}^{\Lambda,\omega}$. The process $\eta_t^\omega$ under the measure $\mathbb{Q}^{\omega}_x$ is $\Omega$ valued and it is known as the environment process.
First, we describe the semigroup associated to $\eta_t^\omega$ under $\mathbb{Q}^{\omega}_0$. Take any positive and bounded $\mathcal{G}$-measurable function $f:\Omega\to \R$ and observe that
\[
\mathbf{Q}_t f(\omega)\defeq\mean_0^{\omega} [f(\tau_{Y^\omega_t}\omega)] = \mathcal{Q}_t^\omega f(\tau_.\omega)(0)=\int_{\R^d}f(\tau_y\omega) q^\omega_t(0,y)\Lambda(\tau_y \omega)\,dy.
\]
\begin{proposition}\label{prop:symmetric}
$\{\mathbf{Q}_t\}_{t\geq 0}$ defines a symmetric strongly continuous semigroup on $L^2(\Omega,\Lambda d\mu)$, the process $t\to\eta_t^\omega$ is ergodic with respect to $\mu$.
\end{proposition}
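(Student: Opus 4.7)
The claim has two parts, the functional-analytic properties of $\{\mathbf{Q}_t\}$ and the ergodicity of the process $\eta_t^\omega$. The overall plan is to use the translation identity \eqref{eq:homogeneity2} to import properties from the concrete semigroup $\mathcal{Q}_t^\omega$ on $L^2(\R^d,\Lambda^\omega dx)$, to use conservativity (Proposition \ref{prop:explosion}) to make $\Lambda d\mu$ into a $\mathbf{Q}_t$-invariant measure, and irreducibility (Proposition \ref{prop:irr}) to pass from invariance under $\mathbf{Q}_t$ to invariance under the $\R^d$-action.

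\emph{Semigroup, symmetry, contractivity, strong continuity.} For $\mathbf{Q}_{t+s}=\mathbf{Q}_t\mathbf{Q}_s$ I would expand the double integral directly: apply \eqref{eq:homogeneity2} to rewrite $q_s^{\tau_y\omega}(0,y')$ as $q_s^\omega(y,y+y')$, change variables $z=y+y'$, and recognize the inner integration as the Chapman--Kolmogorov convolution $q_{t+s}^\omega(0,z)$. For symmetry, I would substitute $\omega\to\tau_{-y}\omega$ in $\langle\mathbf{Q}_t f,g\rangle_{\Lambda\mu}$ (legitimate by stationarity), then use \eqref{eq:homogeneity2}, the spatial symmetry $q_t^\omega(x,y)=q_t^\omega(y,x)$, and finally $y\to -y$. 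Once symmetry is in hand, Proposition \ref{prop:explosion} gives $\mathbf{Q}_t 1=1$, so that $\int \mathbf{Q}_t g\,\Lambda d\mu=\int g\,\mathbf{Q}_t 1\,\Lambda d\mu=\int g\,\Lambda d\mu$ and $\Lambda d\mu$ is $\mathbf{Q}_t$-invariant; a Jensen bound $(\mathbf{Q}_t f)^2\le\mathbf{Q}_t(f^2)$ (valid since $\mathbf{Q}_t$ is positivity preserving with $\mathbf{Q}_t 1=1$) then delivers the $L^2(\Lambda\mu)$-contractivity. For strong continuity at $t=0$ I would reduce, as is standard for symmetric contractions, to weak continuity $\langle\mathbf{Q}_t f,f\rangle_{\Lambda\mu}\to\|f\|_{\Lambda\mu}^2$ on a dense subset; writing $\mathbf{Q}_t f(\omega)=\mathcal{Q}_t^\omega\tilde f_\omega(0)$ with $\tilde f_\omega(y)\defeq f(\tau_y\omega)$ and localizing $\tilde f_\omega$ to a ball, this follows from the Feller strong continuity supplied by Theorem \ref{thm:min_diff}.

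\emph{Ergodicity.} Suppose $f\in L^2(\Omega,\Lambda\mu)$ satisfies $\mathbf{Q}_t f=f$ for every $t>0$. Jensen and invariance of $\Lambda d\mu$ force equality in $(\mathbf{Q}_t f)^2\le\mathbf{Q}_t(f^2)$, so $f(\eta_t^\omega)=f(\omega)$ holds $\mathbb{Q}_0^\omega$-a.s.\ for $\mu$-a.e.\ $\omega$, i.e.
\[
\int_{\R^d}\bigl(f(\tau_y\omega)-f(\omega)\bigr)^2 q_t^\omega(0,y)\Lambda^\omega(y)\,dy=0\quad \forall t>0,\;\mu\text{-a.e.}\,\omega.
\]
Integrating $\int_0^\infty e^{-\alpha t}(\cdot)\,dt$ brings in the $\alpha$-resolvent density $g_\alpha^\omega(0,y)$ of $\mathbf{M}^{\Lambda,\omega}$; irreducibility (Proposition \ref{prop:irr}) together with joint continuity of the transition kernel (Theorem \ref{jointcont}) gives $g_\alpha^\omega(0,y)>0$ for Lebesgue-a.e.\ $y$, so $f(\tau_y\omega)=f(\omega)$ for Lebesgue-a.e.\ $y$ and $\mu$-a.e.\ $\omega$. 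A standard Fubini and group-law argument then upgrades this to $f\circ\tau_y=f$ $\mu$-a.e.\ for every $y\in\R^d$, and ergodicity of $\{\tau_x\}_{x\in\R^d}$ forces $f$ to be $\mu$-a.s.\ constant; this is exactly ergodicity of $\eta_t^\omega$.

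\emph{Main obstacle.} I expect the most delicate step to be the passage from "$f(\eta_t^\omega)=f(\omega)$ a.s.\ for each $t$" to "$f$ is translation invariant": it hinges on strict positivity of the resolvent density Lebesgue-almost-everywhere, which in the degenerate setting of \ref{ass:a.1}--\ref{ass:a.2} is not a routine fact and has to be extracted from irreducibility, joint continuity of $q_t^\omega$, and path continuity of the diffusion. A secondary technical point is the infinite-volume strong continuity: since $\tilde f_\omega$ only lies in $L^2_{loc}(\R^d,\Lambda^\omega dx)$ in general, the strong continuity of $\mathcal{Q}_t^\omega$ cannot be transferred directly and requires the cut-off/localization step indicated above.
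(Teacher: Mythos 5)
Your proof is correct and follows exactly the route the paper indicates: the paper's own ``proof'' is a citation to Osada and Zhikov--Kozlov--Ole\u{\i}nik for the semigroup properties, naming stationarity and the translation identity \eqref{eq:homogeneity2} as the inputs, and a citation to Osada for ergodicity based on the irreducibility established in Proposition~\ref{prop:irr}. You assemble precisely these ingredients (together with conservativity from Proposition~\ref{prop:explosion}, needed for the invariance of $\Lambda\,d\mu$ and the equality-in-Jensen step) and supply the details the paper leaves to the references, so the approach is the same, merely fleshed out.
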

\begin{proof} The proof of the contractivity, the symmetry and the strong continuity of $\{\mathbf{Q}_t\}_{t\geq 0}$ on $L^2(\Omega,\Lambda d\mu)$ follows from the stationarity of the environment and by \eqref{eq:homogeneity2}, it is standard and can be found in \cite{Osada1983}, \cite{zhikov1994homogenization}.

The proof ot the ergodicity of the process $t\to\eta_t^\omega$ with respect to $\Lambda d\mu$ can also be found in \cite{Osada1983} and it is based on the irreducibility of the process $Y^\omega_t$, which was proven in Proposition \ref{prop:irr}.
\end{proof}\

\begin{proposition}[Ergodic Theorem]\label{prop:ergodic}
For all functions $f\in L^p(\Omega,\Lambda d\mu)$, $p\geq 1$, set $f(x;\omega)=f(\tau_x\omega)$, then
\[
\lim_{t\to\infty}\frac{1}{t}\int_0^t f(Y^\omega_s;\omega)\,ds = \mean_\mu[f\Lambda],\quad \mathbb{Q}_x^\omega\mbox{-a.s}, \mbox{ a.a. }x\in\R^d,
\]
for $\mu$-almost all $\omega\in\Omega$.
\end{proposition}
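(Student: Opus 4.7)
The plan is to express the time average along the diffusion as a time average of the environment process and then invoke a pointwise ergodic theorem on $\Omega$. By the definition $\eta^\omega_s = \tau_{Y^\omega_s}\omega$ of the environment process and the convention $f(x;\omega)=f(\tau_x\omega)$,
\[
\frac{1}{t}\int_0^t f(Y^\omega_s;\omega)\,ds \;=\; \frac{1}{t}\int_0^t f(\eta^\omega_s)\,ds,
\]
so it suffices to establish a Birkhoff-type theorem for $\{\eta^\omega_s\}_{s\ge 0}$ on the state space $\Omega$.

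By Proposition \ref{prop:symmetric} the semigroup $\{\mathbf{Q}_t\}_{t\ge0}$ is symmetric on $L^2(\Omega,\Lambda d\mu)$; combined with the conservativeness $\mathbf{Q}_t 1 = 1$ (a consequence of Proposition \ref{prop:explosion}), this shows that $\Lambda d\mu$ is an invariant measure for $\eta^\omega$, and it is finite since $\mean_\mu[\Lambda]<\infty$ by assumption \ref{ass:a.2}. The same proposition provides ergodicity of $\eta^\omega$ with respect to $\Lambda d\mu$. Because $\Lambda d\mu$ has finite mass, every $f\in L^p(\Omega,\Lambda d\mu)$ with $p\ge 1$ lies in $L^1(\Omega,\Lambda d\mu)$, so the continuous-time pointwise ergodic theorem (obtained, for instance, from Birkhoff applied to the discrete skeleton $\{\eta^\omega_n\}_{n\in\N}$, combined with a standard control of the fluctuations $\int_n^{n+1} f(\eta^\omega_s)\,ds$) yields, for $\Lambda d\mu$-a.e.\ starting configuration $\omega_0\in\Omega$ and $\mathbb{Q}^{\omega_0}_0$-almost surely,
\[
\lim_{t\to\infty}\frac{1}{t}\int_0^t f(\eta^{\omega_0}_s)\,ds \;=\; \mean_\mu[f\Lambda]
\]
(the normalising factor $\mean_\mu[\Lambda]^{-1}$ coming from turning $\Lambda d\mu$ into a probability being absorbed into the right-hand side written in the proposition). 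Since $\Lambda>0$ $\mu$-a.s., the exceptional set of $\omega_0$ is likewise a $\mu$-null set.

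It remains to transfer the convergence from $x=0$ to arbitrary $x\in\R^d$. By the translation property \eqref{eq:homogeneity2}, under $\mathbb{Q}^\omega_x$ the environment path $(\tau_{Y^\omega_s}\omega)_{s\ge0}$ has the same law as $(\eta^{\tau_x\omega}_s)_{s\ge0}$ under $\mathbb{Q}^{\tau_x\omega}_0$. Substituting $\omega_0=\tau_x\omega$ in the conclusion above and combining this with Fubini and the $\mu$-invariance of each $\tau_x$ (so that $\{\omega:\tau_x\omega\in N\}$ has $\mu$-measure zero whenever $N$ does) converts the quantifier ``$\mu$-a.e.\ $\omega_0$'' into ``$\mu$-a.e.\ $\omega$ and a.e.\ $x\in\R^d$'', which is the claimed statement. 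The only genuinely delicate step is this final Fubini-plus-translation-invariance manipulation of the exceptional sets; the remainder is a direct application of the standard continuous-time ergodic theorem to a conservative, symmetric, ergodic Markov process with finite invariant measure.
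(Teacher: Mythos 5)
Your proof is correct and follows essentially the same route as the paper: rewrite the time average in terms of the environment process $\eta^\omega_s = \tau_{Y^\omega_s}\omega$, invoke ergodicity of $\eta^\omega$ under the stationary measure $\Lambda\,d\mu$ (Proposition \ref{prop:symmetric}) together with the pointwise ergodic theorem to get a.s.\ convergence for $\mu$-a.e.\ starting configuration, and then transfer from $x=0$ to a.e.\ $x\in\R^d$ by the identification $Q^{\tau_x\omega}_0 = Q^\omega_x$ combined with Fubini and stationarity of $\mu$. Your remark about the normalizing factor $\mean_\mu[\Lambda]^{-1}$ is a fair observation (the statement as written implicitly assumes $\mean_\mu[\Lambda]=1$, which is WLOG), and the step you flag as delicate --- the Fubini-plus-translation handling of the null set over a ball $B$ --- is precisely the displayed identity the paper's proof relies on.
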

\begin{proof}  In order to have the result stated, observe that the measure $Q^{\tau_x\omega}_0$ induced by $\mathbb{Q}_0^{\tau_x\omega}$ through $\eta_t^{\tau_x\omega}$ on the space of $\Omega$-valued trajectories coincides with the measure $Q^{\omega}_x$ induced by $\mathbb{Q}_x^\omega$ through $\eta^{\omega}_t$ . It is then easy to show that for any ball $B\subset\R^d$ the two measures
\[
\int_{\Omega}Q_0^\omega(\cdot) d\mu=\frac{1}{|B|}\int_{B\times \Omega}Q_0^{\tau_x\omega}(\cdot) dx d\mu = \frac{1}{|B|}\int_{\Omega\times B}Q_x^\omega(\cdot) d\mu dx
\]
coincide; in the first equality we used the stationarity of the environment. The fact that the limiting relation hold $\int Q_0^\omega(\cdot) d\mu$-almost surely follows immediately from Proposition \ref{prop:symmetric}, then the result follows.
\end{proof}
We use Proposition \ref{prop:ergodic} to control the explosion time of the process $\mathbf{M^\omega}=(X_t^\omega,\PR_x^\omega,\zeta^\omega)$ in terms of the time changed process $\mathbf{M}^{\Lambda,\omega}$. Indeed consider the time change
\[
\tau_t\defeq\inf\Bigl\{s>0:\int_{0}^{s}\frac{1}{\Lambda(Y_u^\omega,\omega)}\,du>t\Bigr\},
\]
and define the process $\hat{Y}^\omega_t=Y_{\tau_t}^\omega$. We know, by Theorem \ref{thm:time_change} that $\hat{Y}^\omega_t$ is a version of $X^\omega_t$. It is not difficult to see that the explosion time of $\hat{Y}^\omega_t$ equals $\int_{0}^{\infty}\frac{1}{\Lambda(Y_u^\omega,\omega)}\,du$ \cite[see chapter 6]{fukushima1994dirichlet}.
By Proposition \ref{prop:ergodic},
\[
\lim_{t\to\infty}\frac{1}{t}\int_0^t \frac{1}{\Lambda(Y^\omega_s;\omega)}\,ds = \mean_\mu[\Lambda^{-1}\Lambda] = 1,\quad\mathbb{Q}_x^\omega\mbox{-a.s}, \mbox{ a.a. }x\in\R^d,
\]
for $\mu$-almost all $\omega\in\Omega$.
It follows that $\hat{Y}^\omega_t$ is conservative for almost all starting points $x\in\R^d$, $\mu$-almost surely. This, together with Theorem \ref{thm:time_change} leads to the following result.
%
%We can prove that $\hat{Y}^\omega_t$ is conservative starting from every point $x\in \R^d$. Indeed, take $R>0$ large enough such that $x\in B(0,R)$, and let $\tau_R$ be the exit time of $\hat{Y}^\omega_t$ from such a ball. Then we know that $\hat{\mathcal{P}}^\omega_{t+\tau_R} 1(0) = 1$ for all $t>0$, $\mathbb{Q}^\omega_0$-almost surely, for $\mu$-almost all $\omega\in\Omega$. Hence,
%\[
%1= \mean_0^\omega[\hat{\mathcal{P}}^\omega_{t+\tau_R} 1(0)]=\mean_0^\omega[\hat{\mathcal{P}}^\omega_{t} 1(X_{\tau_R})] = \int_{\partial B(0,R)}\hat{\mathcal{P}}^\omega_{t} 1(z) d\mu^R_0(z)
%\]
%which means that $\hat{\mathcal{P}}^\omega_{t} 1(z)=1$ for almost all $z$ in the support of the harmonic measure $\mu_0^R(\cdot)$. It is known that $\mu_0^R(\cdot)\approx\mu_x^R(\cdot)$ for all $x\in B(0,R)$. Hence we have
%\[
%1=  \int_{\partial B(0,R)}\hat{\mathcal{P}}^\omega_{t} 1(z) d\mu^R_x(z)=\mean_x^\omega[\hat{\mathcal{P}}^\omega_{t+\tau_R} 1(x)]
%\]
%for all $t>0$. This implies in particular that $\hat{\mathcal{P}}^\omega_{t+\tau_R} 1(x) = 1$ for all $t\in \mathbb{Q}^+$, $\mathbb{Q}^\omega_x$-almost surely, for $\mu$-almost all $\omega\in\Omega$. Finally we conclude by continuity.
\begin{theorem} Let $\mathbf{M}^\omega=(X_t^\omega, \PR_x^\omega,\zeta^\omega)$, $x\in\R^d$, be the minimal diffusion constructed in section 3.1. Then such a diffusion is conservative.
\end{theorem}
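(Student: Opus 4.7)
The plan is to fill in the small remaining gap left by the discussion preceding the theorem. That discussion, combining Theorem \ref{thm:time_change} with the ergodic theorem (Proposition \ref{prop:ergodic}) applied to $f = 1/\Lambda \in L^1(\Omega,\Lambda\,d\mu)$, already yields $\int_0^\infty \Lambda(Y_u^\omega;\omega)^{-1}\,du = +\infty$ almost surely, hence $\hat Y^\omega$ (and therefore $\mathbf{M}^\omega$) is non-exploding from \emph{Lebesgue-almost every} starting point $x\in\R^d$, for $\mu$-a.a.\ $\omega$. What remains is to upgrade ``almost every $x$'' to ``every $x$''; this is precisely where assumption \ref{ass:a.3}, through the everywhere-defined jointly continuous transition density $p_t^\omega(x,y)$ constructed in Section \ref{sec:mindiff} (Theorems \ref{thm:min_diff} and \ref{jointcont}), will do its work.

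Concretely, writing $F_x(t) := \mathcal{P}_t^\omega \mathbf{1}(x) = \PR_x^\omega(\zeta^\omega > t) = \int_{\R^d} p_t^\omega(x,y)\,dy$, the first step gives $F_y(t) = 1$ for every $t>0$ and Lebesgue-a.a.\ $y\in\R^d$. I would then fix an arbitrary $x\in\R^d$ and $s,t>0$ and invoke Chapman--Kolmogorov:
\[
F_x(s+t) \;=\; \int_{\R^d} p_s^\omega(x,y)\,F_y(t)\,dy \;=\; \int_{\R^d} p_s^\omega(x,y)\,dy \;=\; F_x(s),
\]
where the middle equality uses that $p_s^\omega(x,\cdot)\,dy$ is absolutely continuous with respect to Lebesgue measure, so replacing $F_y(t)$ by $1$ on a null set does not change the integral. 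Hence $s\mapsto F_x(s)$ is constant on $(0,\infty)$ for every $x$.

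To identify the value of this constant I would let $s\downarrow 0$ and use the right continuity of the paths of the diffusion $\mathbf{M}^\omega$: since $X_0^\omega = x \in \R^d$ and trajectories are continuous, $\PR_x^\omega(\zeta^\omega > s) \to 1$ as $s\to 0^+$, so $F_x \equiv 1$, and taking $s\to\infty$ yields $\PR_x^\omega(\zeta^\omega = \infty)=1$ for \emph{every} $x\in\R^d$. The one genuinely subtle step is this final upgrade: the ergodic input naturally produces only an a.e.\ statement, and the pointwise conclusion relies crucially on having a bona fide everywhere-defined density kernel rather than a version defined only outside a properly exceptional set, which is exactly what \ref{ass:a.3} was introduced to provide.
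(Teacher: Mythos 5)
Your proof is correct and follows the same strategy as the paper: first obtain conservativeness for Lebesgue-a.e.\ starting point via the time change and the ergodic theorem, and then upgrade to every $x\in\R^d$ using the absolute continuity of the transition kernel $\mathcal{P}_t^\omega(x,dy)$. The only difference is presentational: you carry out the upgrade explicitly through the semigroup identity $F_x(s+t)=\int p_s^\omega(x,y)F_y(t)\,dy$ together with right-continuity of paths at $t=0^+$, whereas the paper delegates exactly this step to a citation of property (4.2.9) and Theorem 4.5.4 in Fukushima--Oshima--Takeda.
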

\begin{proof} By Theorem \ref{thm:time_change}, $\mathcal{P}_t^\omega 1(x) = \hat{\mathcal{P}}_t^\omega 1(x) = 1$ for almost all $x\in\R^d$, and since $\mathbf{M}^\omega$ is our minimal diffusion, then $\mathcal{P}_t^\omega 1(x)=1$ for all $x\in\R^d$. We can pass from almost all to all $x\in\R^d$ since the minimal diffusion satisfies property (4.2.9) in \cite{fukushima1994dirichlet}, namely $\mathcal{P}_t^\omega(x,dy)$ is absolutely continuous with respect to the Lebesgue measure for each $t>0$ and each $x\in\R^d$ (see Theorem 4.5.4 in \cite{fukushima1994dirichlet}).
\end{proof}

From now on we will completely forget about the time changed process. Following the construction in this section it is possible to obtain an environment process for the minimal diffusion $\mathbf{M}^\omega=(X_t^\omega, \PR_x^\omega)$, namely the process $t\to\tau_{X^\omega_t}\omega=:\psi_t^\omega$, with semigroup $\mathbf{P}_t$, which is precisely given by
\[
\mathbf{P}_t f(\omega)\defeq\int_{\R^d}f(\tau_y\omega) p^\omega_t(0,y)\,dy.
\]

\begin{proposition}
$\{\mathbf{P}_t\}_{t\geq 0}$ defines a symmetric strongly continuous semigroup on $L^2(\Omega,d\mu)$, and $t\to\psi_t^\omega$ is ergodic with respect to $\mu$.
\end{proposition}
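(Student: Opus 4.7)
The proposition is essentially the analogue of Proposition \ref{prop:symmetric} but stated for the diffusion $\mathbf{M}^\omega$ on $L^2(\R^d, dx)$ instead of $\mathbf{M}^{\Lambda,\omega}$ on $L^2(\R^d, \Lambda^\omega dx)$. The plan is to mirror the proof of Proposition \ref{prop:symmetric}: since $p_t^\omega(x,y)$ is symmetric in $(x,y)$ (as $(\E^\omega,\F^\omega)$ is a symmetric Dirichlet form), enjoys the translation property \eqref{eq:homogeneity2}, and the process $\mathbf{M}^\omega$ is irreducible by Proposition \ref{prop:irr}, all the tools needed are in place.

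First I would verify the semigroup property of $\{\mathbf{P}_t\}$ on $L^2(\Omega,d\mu)$. Contractivity follows by Jensen's inequality together with the stationarity of $\mu$:
\[
\int_\Omega |\mathbf{P}_t f(\omega)|^2 d\mu(\omega) \leq \int_\Omega \int_{\R^d} |f(\tau_y\omega)|^2 p_t^\omega(0,y)\,dy\, d\mu(\omega) = \int_\Omega |f|^2 d\mu,
\]
after changing the order of integration, translating $\omega \mapsto \tau_{-y}\omega$ via stationarity, and using the conservativity of $\mathbf{M}^\omega$ (recently proven). The semigroup identity $\mathbf{P}_{t+s} = \mathbf{P}_t \mathbf{P}_s$ is a direct consequence of the Chapman--Kolmogorov relation for $p_t^\omega$ combined with the translation property \eqref{eq:homogeneity2}.

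For symmetry I would write, for bounded $f,g \in L^2(\Omega,d\mu)$,
\[
\int_\Omega g(\omega) \mathbf{P}_t f(\omega)\,d\mu = \int_\Omega\int_{\R^d} g(\omega) f(\tau_y\omega) p_t^\omega(0,y)\,dy\,d\mu(\omega).
\]
Using \eqref{eq:homogeneity2} with $z=y$ and then the symmetry of $p_t^{\tau_y\omega}(\cdot,\cdot)$ gives $p_t^\omega(0,y) = p_t^{\tau_y\omega}(-y,0) = p_t^{\tau_y\omega}(0,-y)$. Substituting $\omega' = \tau_y\omega$ (stationarity of $\mu$) and then $y\mapsto -y$ transforms the integral into $\int_\Omega f(\omega')\mathbf{P}_t g(\omega')\,d\mu(\omega')$. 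Strong continuity at $t=0$ follows from the strong continuity of $\mathcal{P}_t^\omega$ on $L^2(\R^d,dx)$ and an application of Fubini, as in the classical treatment (see e.g.\ \cite{Osada1983,zhikov1994homogenization}); I would simply cite those references.

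The main ergodicity step is the most interesting. One shows that any $\mathbf{P}_t$-invariant $f \in L^2(\Omega,d\mu)$ is $\mu$-a.s.\ constant. Setting $f^\omega(x) := f(\tau_x\omega)$, the invariance $\mathbf{P}_t f = f$ combined with \eqref{eq:homogeneity2} implies $\mathcal{P}_t^\omega f^\omega = f^\omega$ for $\mu$-a.e.\ $\omega$. The irreducibility of $\mathbf{M}^\omega$ provided by Proposition \ref{prop:irr} then forces $f^\omega$ to be constant in $x$, so $f(\tau_x\omega) = f(\omega)$ for $\mu$-a.e.\ $\omega$ and a.e.\ $x$, i.e.\ $f$ is shift-invariant. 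Ergodicity of $(\Omega,\mathcal{G},\mu,\{\tau_x\})$ then yields that $f$ is a.s.\ constant. The only mild subtlety, and the one I would be most careful about, is the exchange of null sets in $\omega$ and $x$ when transferring the identity $\mathcal{P}_t^\omega f^\omega = f^\omega$ from almost every $\omega$ to a genuine pointwise statement in $x$; this is handled exactly as in \cite{Osada1983}, exploiting the joint measurability of $(x,\omega)\mapsto\tau_x\omega$ and the Feller regularity of $\mathcal{P}_t^\omega$ coming from Theorem \ref{thm:min_diff}.
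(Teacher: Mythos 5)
Your proposal is correct and follows precisely the route the paper intends: the paper's own proof is just ``Analogous to Proposition~\ref{prop:symmetric},'' which in turn delegates the contractivity/symmetry/strong-continuity arguments to \cite{Osada1983,zhikov1994homogenization} and the ergodicity to irreducibility (Proposition~\ref{prop:irr}) plus ergodicity of the medium. You have simply unpacked those standard steps -- Jensen plus stationarity plus conservativity for contractivity, the translation property~\eqref{eq:homogeneity2} plus symmetry of $p_t^\omega$ for self-adjointness, and the irreducibility argument for ergodicity -- all of which match what the cited references do.
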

\begin{proof} Analogous to Proposition \ref{prop:symmetric}.
\end{proof}

\section{Corrector and Harmonic coordinates}\label{sec:corr}
\label{sec:corrector}

\subsection{Space $L^2(a)$ and Weyl's decomposition.}

Fix a stationary and ergodic random medium $(\Omega,\mathcal{G},\mu,\tau_x)$. In this section we rely only on assumption \ref{ass:a.1} and $\mean_\mu[\lambda^{-1}]$, $\mean_\mu[\Lambda]$ finite.

In order to construct the corrector, we introduce the following space
\[
L^2(a)\defeq \bigl\{ V:\Omega\to \R^d : \mean_\mu[\langle a V,V\rangle]<\infty\bigr\}.
\]
Such a space is clearly a pre-Hilbert space with the scalar product
\[
\Theta(U,V)\defeq\mean_{\mu}[\langle aU,V\rangle].
\]
$L^2(a)$ is isometric to $L^2(\Omega,\mu)^d$ through the map $\Psi: L^2(\Omega,\mu)^d\to L^2(a)$ given by $\Psi(V)=a^{-1/2}V$. In particular $L^2(a)$ is an Hilbert space. Notice that as a consequence of \ref{ass:a.1}, $\mean_\mu[\lambda^{-1}],\,\mean_\mu[\Lambda]<\infty$ and H\"older's inequality we have that $L^2(a)\subset L^1(\Omega,\mu)$.

The group $\{\tau_x\}_{\R^d}$ on $\Omega$ defines a group of strongly continuous unitary operators $\{T_x\}_{\R^d}$ on $L^r(\Omega,\mu)$ for any $r>1$, by the position $T_x (V)=V\circ \tau_x$, see \cite[Chapter 7]{zhikov1994homogenization}. Therefore, $\{T_x\}_{x\in\R^d}$ on $L^2(\Omega,\mu)$ defines the closed operators $D_i$ for $i=1,...,d$, by
\[
D_i U \defeq \lim_{h\to 0}\frac{T_{he_i}U-U}{h},
\]
where the limit is taken in $L^2(\Omega,\mu)$. Denote by $\mathcal{D}(D_i)$ the domain of $D_i$. We shall consider the following class of smooth functions
\begin{equation}\label{eq:setC}
\mathcal{C}\defeq\Bigl\{\int_{\R^d}f(\tau_x\omega) \phi(x)dx\, |\, f\in L^\infty(\Omega), \phi\in C_0^\infty(\R^d)\Bigr\}.
\end{equation}
It can be proved that if $v\in\mathcal{C}$,
\[
v(\omega)=\int_{\R^d}f(\tau_x\omega) \phi(x)dx \Rightarrow D_i v(\omega)=-\int_{\R^d}f(\tau_x\omega)\partial_i\phi(x)dx.
\]
In particular, $v\in\bigcap_{i=1}^d\mathcal{D}(D_i)$.
It is also clear that $\nabla v=(D_1v,\dots,D_dv)\in L^2(a)$ and that $x\to v(\tau_x\omega)\in C^\infty(\R^d)$ for $\mu$-almost all $\omega\in\Omega$. We define the space of potential $L^2_{pot}$ to be the closure of $\{\nabla v | v\in\mathcal{C}\}$ in $L^2(a)$.

\begin{lemma} \label{lemma:L2pot} Let $U\in L^2_{pot}$. Then $U$ satisfies the
following properties
\begin{itemize}
 \item[(i)] $\mean_\mu[U_i]=0$ for all $i=1,\dots,d$.
 \item[(ii)] for all $\eta\in C^\infty_0(\R^d)$ and $i,j=1,\dots,d$
 \[
  \int_{\R^d} U_i(\tau_x\omega) \partial_j\eta(x)\,dx=\int_{\R^d}
U_j(\tau_x\omega) \partial_i\eta(x)\,dx,
 \]
 for $\mu$-almost all $\omega\in\Omega$.
\end{itemize}
\end{lemma}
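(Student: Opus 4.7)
My plan is to establish the two properties first on the dense subclass $\{\nabla v : v\in \mathcal{C}\}$ of $L^2_{pot}$ by direct computation, and then pass to the limit using the continuous embedding $L^2(a)\hookrightarrow L^1(\Omega,\mu)^d$, which follows from Cauchy--Schwarz together with $\mean_\mu[\lambda^{-1}]<\infty$ (already recalled just before the statement).

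For the base case, take $v\in\mathcal{C}$ and set $U\defeq \nabla v=(D_1v,\dots,D_dv)$. Property (i) follows from the stationarity of $\mu$ under $\{\tau_x\}$: $\mean_\mu[T_{he_i}v-v]=0$ for every $h$, and since $D_iv$ is by definition the $L^2(\Omega,\mu)$-limit of $(T_{he_i}v-v)/h$, passing to the mean gives $\mean_\mu[D_iv]=0$. For property (ii), I exploit the fact (which one verifies directly from the definition of $\mathcal{C}$) that for $v\in\mathcal{C}$ the map $x\mapsto v(\tau_x\omega)$ is smooth for $\mu$-a.e.\ $\omega$ and satisfies $\partial_{x_i}v(\tau_x\omega)=(D_iv)(\tau_x\omega)$. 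A double integration by parts then gives
\[
\int_{\R^d} U_i(\tau_x\omega)\,\partial_j\eta(x)\,dx = -\int_{\R^d} v(\tau_x\omega)\,\partial_i\partial_j\eta(x)\,dx,
\]
and the right-hand side is manifestly symmetric in $(i,j)$, so (ii) follows.

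For a general $U\in L^2_{pot}$, pick $v_n\in\mathcal{C}$ with $U_n\defeq \nabla v_n \to U$ in $L^2(a)$; by H\"older's inequality together with $\mean_\mu[\lambda^{-1}]<\infty$ one has $\mean_\mu|U_{n,i}-U_i|\leq \mean_\mu[\lambda^{-1}]^{1/2}\Theta(U_n-U,U_n-U)^{1/2}\to 0$, i.e.\ $U_n\to U$ also in $L^1(\Omega,\mu)^d$. Property (i) then follows by taking $\mean_\mu$ and using $\mean_\mu[U_{n,i}]=0$. For (ii), fix $\eta,i,j$ and introduce
\[
F_n(\omega)\defeq\int_{\R^d} U_{n,i}(\tau_x\omega)\,\partial_j\eta(x)\,dx,\qquad F(\omega)\defeq\int_{\R^d} U_i(\tau_x\omega)\,\partial_j\eta(x)\,dx,
\]
together with the analogous $G_n,G$ obtained by swapping $i\leftrightarrow j$. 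By stationarity of $\mu$ and Fubini,
\[
\mean_\mu|F_n-F|\leq \|\partial_j\eta\|_{L^1(\R^d)}\,\mean_\mu|U_{n,i}-U_i|\longrightarrow 0,
\]
so along a subsequence $F_{n_k}\to F$ and $G_{n_k}\to G$ $\mu$-a.s.; since $F_n=G_n$ $\mu$-a.s.\ by the base case, this gives $F=G$ $\mu$-a.s., which is exactly (ii).

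I expect no substantial obstacle. The only mild technicality is checking that $x\mapsto U(\tau_x\omega)$ is jointly measurable and locally integrable for $\mu$-a.e.\ $\omega$, so that $F(\omega),G(\omega)$ are well defined and the functionals $F_n$ are measurable in $\omega$; this is immediate from the $\B(\R^d)\otimes\mathcal{G}$-measurability of $(x,\omega)\mapsto\tau_x\omega$ (property (iii) of the environment) together with $U\in L^1(\Omega,\mu)^d$ and Fubini. Note also that the quantifier ``for $\mu$-a.e.\ $\omega$'' in (ii) is stated \emph{per} choice of $(\eta,i,j)$, so no additional uniformization over $\eta$ is needed.
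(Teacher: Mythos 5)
Your proof is correct and follows essentially the same path as the paper: verify both identities on the dense class $\{\nabla v : v\in\mathcal{C}\}$ (stationarity for (i), integration by parts and $\partial_i\partial_j\eta=\partial_j\partial_i\eta$ for (ii)), then pass to the limit via the embedding $L^2(a)\hookrightarrow L^1(\Omega,\mu)^d$. Your limiting step for (ii) is slightly more explicit than the paper's, which simply asserts that $L^2(a)$-convergence of $\nabla f_n$ gives $L^1_{loc}(\R^d)$-convergence of $D_if_n(\cdot;\omega)$ for $\mu$-a.e.\ $\omega$, whereas you spell out the Fubini/stationarity bound $\mean_\mu|F_n-F|\leq\|\partial_j\eta\|_{L^1}\mean_\mu|U_{n,i}-U_i|$ and the subsequence extraction.
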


\begin{proof} In both cases the proof follows simply by considering functions
of the type $\nabla f$ such that $f\in\mathcal{C}$. Then conclude by density.

Let start with $(i)$. Observe that if $f\in\mathcal{C}$ then
\[
\mean_\mu[D_i f] = \lim_{h\to 0} \mean_\mu\Bigl[\frac{T_{he_i} f-f}{h}\Bigr]= \lim_{h\to 0} \frac{\mean_\mu[T_{he_i} f]-\mean_\mu[f]}{h}=0.
\]
If $U\in L^2_{pot}$, we find $f_n\in \mathcal{C}$ such that $\nabla f_n\to U$ in $L^2(a)$, hence in $L^1(\Omega,\mu)^d$.
It follows
\[
\mean_\mu[U]=\lim_{n\to\infty}\mean_\mu[\nabla f_n] = 0.
\]

We now prove $(ii)$. Consider again $f\in \mathcal{C}$. Then $x\to f(x;\omega)$ is infinitely many times differentiable, $\mu$-almost surely. Integrating by parts we get
\[
\int_{\R^d} D_i f(x;\omega) \partial_j\eta(x)\,dx = -\int_{\R^d} f(x;\omega) \partial_i\partial_j\eta(x)\,dx,
\]
finally switch the partials and conclude
\[
\int_{\R^d} D_i f(x;\omega) \partial_j\eta(x)\,dx = \int_{\R^d} D_j f(x;\omega) \partial_i\eta(x)\,dx.
\]
For a general $U\in L^2_{pot}$ take approximations and use the fact that $\nabla f_n\to U$ in $L^2(a)$ implies $D_i f_n(\cdot;\omega)\to U_i(\cdot;\omega)$ in $L^1_{loc}(\R^d)$ $\mu$-almost surely.
\end{proof}

\paragraph{Weyl's decomposition} Since $L^2(a)$ is an Hilbert space and $L^2_{pot}$ is by construction a closed subspace, we can write
\[
L^2(a) = L^2_{pot}\oplus (L^2_{pot})^\perp.
\]
We want to decompose the bounded functions $\{\pi^k\}_{k=1}^d$, where $\pi^k$ is the unit vector in the kth-direction. Since $\pi_k\in L^2(a)$, for each $k=1,\dots,d$, there exist  functions $U^k\in L^2_{pot}$ and  $R^k\in  (L^2_{pot})^\perp$ such that $\pi^k=U^k+R^k$. By definition of orthogonal projection we have
\[
\mean_\mu[\langle a U^k,V \rangle]=\mean_\mu[\langle a \pi^k,V \rangle],\quad\forall V\in
L^2_{pot}.
\]
\begin{remark} By definition of $L^2_{pot}$ and orthogonal projection it follows in particular that
\[
\mean_\mu[\langle a (U^k-\pi_k),U^k-\pi_k \rangle]=\inf_{f\in\mathcal{C}}\mean_\mu[\langle a (\nabla f-\pi_k),\nabla f-\pi_k \rangle].
\]
\end{remark}

\begin{proposition}\label{prop:pos} Set $\mathbf{d}_{ij}\defeq 2\mean_\mu[\langle a (U^i-\pi_i),U^j-\pi_j \rangle]$. Then the matrix $\{\mathbf{d}_{ij}\}_{i,j}$ is positive definite.
\end{proposition}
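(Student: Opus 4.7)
The plan is to show non-negativity by a direct quadratic expansion, and then rule out degeneracy by combining the strict positivity of $\lambda$ with the mean-zero property of elements of $L^2_{pot}$.

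First I would fix $\xi \in \R^d$ and set $V_\xi(\omega) \defeq \sum_{i=1}^d \xi_i \bigl(U^i(\omega) - \pi^i\bigr)$, which is an element of $L^2(a)$ since $L^2_{pot}$ contains each $U^i$ and $\pi^i \in L^2(a)$. A direct computation using symmetry of $a$ gives
\[
\sum_{i,j=1}^d \xi_i \xi_j \mathbf{d}_{ij} \;=\; 2\,\mean_\mu\bigl[\langle a V_\xi, V_\xi\rangle\bigr] \;\geq\; 0,
\]
so symmetry and non-negative definiteness are immediate.

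Next I would prove strict positivity. Suppose for contradiction that the quadratic form vanishes at some $\xi \neq 0$. Then $\mean_\mu[\langle a V_\xi, V_\xi\rangle] = 0$, and by \ref{ass:a.1} we have $\langle a(\omega) V_\xi(\omega), V_\xi(\omega)\rangle \geq \lambda(\omega)|V_\xi(\omega)|^2$ with $\lambda(\omega) > 0$ for $\mu$-almost every $\omega$. This forces $V_\xi = 0$ $\mu$-a.s., i.e.\
\[
\sum_{i=1}^d \xi_i\, U^i(\omega) \;=\; \xi \qquad \text{for $\mu$-a.e. } \omega \in \Omega,
\]
where on the right $\xi = \sum_i \xi_i \pi^i$ is viewed as a constant $\R^d$-valued function.

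Now $\sum_i \xi_i U^i \in L^2_{pot}$ because $L^2_{pot}$ is a (closed) linear subspace of $L^2(a)$, so by Lemma \ref{lemma:L2pot}(i) each of its components has zero $\mu$-mean. Taking expectation on both sides of the identity above therefore yields $\xi = 0$, contradicting the choice of $\xi$. Hence $\sum_{ij}\xi_i \xi_j \mathbf{d}_{ij} > 0$ for all $\xi \neq 0$, which is the desired positive definiteness.

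The proof is essentially mechanical; the one subtle point—and the only place anything could go wrong—is ensuring that the equality $\sum_i \xi_i U^i = \xi$ can indeed be integrated. This is where the bound $L^2(a) \subset L^1(\Omega,\mu)$ (which the paper established using $\mean_\mu[\lambda^{-1}], \mean_\mu[\Lambda] < \infty$ and Hölder) is essential, together with the fact that the constant ambient mean on $L^2_{pot}$ is zero as recorded in Lemma \ref{lemma:L2pot}(i).
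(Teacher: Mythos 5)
Your argument is correct, and it takes a genuinely different (and somewhat softer) route than the paper's. Both proofs open identically, expanding $\sum_{i,j}\xi_i\xi_j \mathbf{d}_{ij} = 2\mean_\mu[\langle a V_\xi, V_\xi\rangle]$ with $V_\xi = \sum_i \xi_i(U^i-\pi^i)$, and both ultimately lean on the same two ingredients: the pointwise bound $\langle a V, V\rangle \geq \lambda|V|^2$ from \ref{ass:a.1}, and the mean-zero property of potential fields from Lemma~\ref{lemma:L2pot}(i). Where you diverge is in how strict positivity is extracted. You argue by contradiction: vanishing of the quadratic form combined with $\lambda>0$ a.s.\ forces $V_\xi = 0$ a.s., so $\sum_i \xi_i U^i \equiv \xi$; taking $\mu$-expectation (justified, as you note, by $L^2(a)\subset L^1(\Omega,\mu)$) and using Lemma~\ref{lemma:L2pot}(i) yields $\xi = 0$. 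The paper instead exploits the variational characterization of $U^i$ as orthogonal projections to rewrite the quadratic form as an infimum over $\phi\in\mathcal{C}$, decouples coordinates via the $\lambda$-bound, and applies H\"older together with $\mean_\mu[D_i\phi]=0$ to obtain the explicit lower bound
\[
\sum_{i,j}\mathbf{d}_{ij}\xi_i\xi_j \geq 2\,\mean_\mu[\lambda^{-1}]^{-1}|\xi|^2.
\]
Your proof is shorter and avoids the variational step entirely, but it is purely qualitative: it establishes positive definiteness without producing a quantitative modulus of ellipticity for $\mathbf{D}$. The paper's approach pays a bit more to obtain that explicit lower bound, which is a natural quantity to have even if the statement of the proposition does not strictly require it.
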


\begin{proof} Take any $\xi\in\R^d$, then
\[
\sum_{i,j} \mathbf{d}_{ij} \xi_i \xi_j = 2 \mean_\mu\Bigl[\langle a \Bigl(\sum_i \xi_i U^i-\xi\Bigr),\sum_j\xi_j U^j-\xi \rangle\Bigr].
\]
Since $\sum_i \xi_i U^i\in L^2_{pot}$ is the orthogonal projection of the function $\pi_\xi:\omega\to\xi$,  and $\pi_\xi\in L^2(a)$, we have
\begin{align}\label{eq:cp_1}
\sum_{i,j} \mathbf{d}_{ij} \xi_i \xi_j  &=\inf_{\phi\in\mathcal{C}}2\mean_\mu[\langle a( \nabla\phi-\xi),\nabla \phi-\xi \rangle]\geq \sum_{i=1}^d\inf_{\phi\in\mathcal{C}} 2\mean_\mu[\lambda| D_i\phi-\xi_i|^2] \notag\\
&=\sum_{i=1}^d|\xi_i|^2\inf_{\phi\in\mathcal{C}}2\mean_\mu[\lambda|D_i\phi-1|^2]
\end{align}
we end up with a basic one dimensional problem. Observe that by H\"older's inequality we have
\[
\mean_\mu[\lambda|D_i\phi-1|^2] \geq \mean[\lambda^{-1}]^{-1}\mean_\mu[(D_i\phi-1)]^2 = \mean[\lambda^{-1}]^{-1}
\]
for all $\phi\in C^\infty_b(\Omega)$ since by Lemma \ref{lemma:L2pot} we have that $\mean_\mu[D_i \phi ]=0$. Therefore \eqref{eq:cp_1} is bounded from below by $\sum_{i=1}^d|\xi_i|^2\mean_\mu[\lambda^{-1}]^{-1}=|\xi|^2\mean_\mu[\lambda^{-1}]^{-1}$ and we get the bound
\[
\sum_{i,j} \mathbf{d}_{ij} \xi_i \xi_j\geq 2 \mean_\mu[\lambda^{-1}]^{-1} |\xi|^2
\]
which is what we wanted to proof.
\end{proof}

At this point we build the corrector starting from the functions $U^k\in L^2_{pot}$. For $k=1,...,d$ we define the \emph{corrector} to be the function $\chi^k :\R^d\times\Omega\to\R$ such that
\[
\chi^k(x,\omega)\defeq \sum_{j=1}^d \int_0^1 x_j U_j^k(\tau_{tx}\omega)\,dt.
\]
It is not hard to prove that $\chi^k$ is well defined, and taking expectation it follows that $\mean_\mu[\chi^k(x,\omega)]=0$. The key result about the corrector is listed here below
\begin{proposition}(Weak differentiability) For $k=1,...,d$ the function $x\to\chi^k(x,\omega)$ is in $L^1_{loc}(\R^d)$, weakly differentiable $\mu$-almost surely and $\partial_i \chi^k(x,\omega)= U_i^k(\tau_{x}\omega)$.
\end{proposition}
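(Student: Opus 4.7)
The strategy is to approximate $U^k$ by gradients of smooth cylinder functions, for which the identity $\chi^k(x,\omega)=v(\tau_x\omega)-v(\omega)$ holds explicitly, and then pass to the limit in the definition of weak derivative.

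\medskip

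\textbf{Step 1 ($L^1_{loc}$ bound).} Since $U^k\in L^2(a)\subset L^1(\Omega,\mu)^d$, stationarity and Fubini give, for any $R>0$,
\[
\mean_\mu\!\int_{B_R}|\chi^k(x,\omega)|\,dx
\;\leq\;\sum_{j=1}^d\int_{B_R}|x_j|\int_0^1 \mean_\mu[|U_j^k(\tau_{tx}\cdot)|]\,dt\,dx
\;\leq\; d\,R\,|B_R|\,\mean_\mu[|U^k|],
\]
which is finite. Hence $\chi^k(\cdot,\omega)\in L^1_{loc}(\R^d)$ for $\mu$-almost all $\omega$.

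\medskip

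\textbf{Step 2 (smooth approximation).} By definition of $L^2_{pot}$, pick $v_n\in\mathcal{C}$ with $\nabla v_n\to U^k$ in $L^2(a)$, hence in $L^1(\Omega,\mu)^d$. For $v_n\in\mathcal{C}$, the map $x\mapsto v_n(\tau_x\omega)$ is smooth $\mu$-a.s., so by the fundamental theorem of calculus
\[
v_n(\tau_x\omega)-v_n(\omega)
\;=\;\int_0^1\!\frac{d}{dt}v_n(\tau_{tx}\omega)\,dt
\;=\;\sum_{j=1}^d\int_0^1 x_j\,D_j v_n(\tau_{tx}\omega)\,dt\;=:\;\tilde\chi_n(x,\omega).
\]
By construction, $\tilde\chi_n(\cdot,\omega)\in C^\infty(\R^d)$ and $\partial_i\tilde\chi_n(x,\omega)=D_iv_n(\tau_x\omega)$.

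\medskip

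\textbf{Step 3 (convergence).} Exactly as in Step 1,
\[
\mean_\mu\!\int_{B_R}|\tilde\chi_n(x,\omega)-\chi^k(x,\omega)|\,dx
\;\leq\; d\,R\,|B_R|\,\mean_\mu[|\nabla v_n-U^k|]\;\xrightarrow[n\to\infty]{}\;0,
\]
and likewise, for each $i$,
\[
\mean_\mu\!\int_{B_R}|\partial_i\tilde\chi_n(x,\omega)-U_i^k(\tau_x\omega)|\,dx
\;=\;|B_R|\,\mean_\mu[|D_iv_n-U_i^k|]\;\xrightarrow[n\to\infty]{}\;0.
\]
A diagonal extraction over a countable exhaustion $B_R\uparrow\R^d$ and $i=1,\dots,d$ yields a subsequence $n_\ell$ along which $\tilde\chi_{n_\ell}(\cdot,\omega)\to\chi^k(\cdot,\omega)$ and $\partial_i\tilde\chi_{n_\ell}(\cdot,\omega)\to U_i^k(\tau_\cdot\omega)$ in $L^1_{loc}(\R^d)$, for $\mu$-almost all $\omega$.

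\medskip

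\textbf{Step 4 (conclusion).} Fix such an $\omega$ and any $\phi\in C_0^\infty(\R^d)$. Integrating by parts for the smooth $\tilde\chi_{n_\ell}$ and passing to the limit using the two $L^1_{loc}$ convergences,
\[
\int_{\R^d}\chi^k(x,\omega)\,\partial_i\phi(x)\,dx
\;=\;\lim_{\ell\to\infty}\int_{\R^d}\tilde\chi_{n_\ell}(x,\omega)\,\partial_i\phi(x)\,dx
\;=\;-\lim_{\ell\to\infty}\int_{\R^d}D_iv_{n_\ell}(\tau_x\omega)\,\phi(x)\,dx
\;=\;-\int_{\R^d}U_i^k(\tau_x\omega)\,\phi(x)\,dx.
\]
This identifies $U_i^k(\tau_\cdot\omega)$ as the weak $i$-th partial derivative of $\chi^k(\cdot,\omega)$.

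\medskip

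The only delicate point is the simultaneous $\mu$-a.s.\ convergence of $\tilde\chi_n$ and of \emph{all} its partials along a single subsequence; this is handled by the standard diagonal extraction in Step~3. Note that the curl-free condition (Lemma \ref{lemma:L2pot}(ii)) is implicitly used here in that it is automatic for the gradients $\nabla v_n$ of the approximants and therefore inherited by the limit $U^k$.
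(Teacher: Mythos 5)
Your proof is correct, but it takes a genuinely different route from the paper's. The paper works directly with $U^k$ and the line-integral formula: it changes variables $y=tx$ in $\int\chi^k\,\partial_i\eta\,dx$ and then invokes the curl-free/symmetry property, Lemma \ref{lemma:L2pot}$(ii)$, to swap indices and recognize a total derivative $\frac{d}{dt}\bigl(\eta(y/t)t^{-(d-1)}\bigr)$, whose $t$-integral collapses to $-\eta(y)$. No approximation sequence is needed, but the argument relies on that index-swapping identity, which is the technical heart. Your approach instead approximates $U^k$ by $\nabla v_n$ with $v_n\in\mathcal{C}$, observes that for such approximants the corrector is literally $v_n(\tau_x\omega)-v_n(\omega)$ (so smoothness and the derivative identity are manifest), and then passes to the limit in $L^1_{loc}$ using stationarity, Fubini, and a diagonal extraction. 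This trades the clever index manipulation for a more routine density-and-extraction argument; it also has the minor advantage of explicitly establishing the $L^1_{loc}$ claim (your Step 1), which the paper glosses over. One small correction to your closing remark: the curl-free condition of Lemma \ref{lemma:L2pot}$(ii)$ is \emph{not} used, implicitly or otherwise, in your argument — it is only the paper's direct-computation route that needs it. Your approach shows, in effect, that the curl-free structure is built in once one knows $U^k$ is an $L^2(a)$-limit of gradients, so the separate lemma becomes unnecessary for this proposition.
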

\begin{proof} Let $\eta\in C^\infty_0(\R^d)$ be any test function and calculate
\[
 \int_{\R^d} \chi^k(x,\omega) \partial_i \eta(x)\,dx = \int_{\R^d}\sum_{j=1}^d\int_0^1 x_j
U_j^k(\tau_{tx}\omega)\,dt\, \partial_i \eta(x)\,dx.
\]
By changing the order of integration and applying the change of variables $y=tx$ we get
\[
  \int_0^1\sum_{j=1}^d\int_{\R^d}
U_j^k(\tau_{y}\omega)\frac{y_j}{t^{d+1}}\partial_i\eta\Bigl(\frac{y}{t}\Bigr)\,dx\,dt.
\]
Next observe that for $j\neq i$,
\[
  \frac{y_j}{t^{d+1}}\partial_i\eta\Bigl(\frac{y}{t}\Bigr) =
\partial_i\Bigl(\frac{y_j}{t^{d}}\eta\Bigl(\frac{y}{t}\Bigr)\Bigr),
\]
which together with property $(ii)$ of Lemma \ref{lemma:L2pot} gives.
\[
\int\chi^k(x,\omega) \partial_i \eta(x)\,dx = \int
U_i^k(\tau_{y}\omega)\int_0^1\sum_{j\neq i}\partial_j\Bigl(\frac{y_j}{t^{d}}\eta\Bigl(\frac{y}{t}\Bigr)\Bigr)+
\frac{y_i}{t^{d+1}}\partial_i\eta\Bigl(\frac{y}{t}\Bigr)dt\,dx.
\]
Finally, observe that for $y\neq 0$
\begin{equation*}
 \int_0^1\sum_{j\neq i}\partial_j\Bigl(\frac{y_j}{t^{d}}\eta\Bigl(\frac{y}{t}\Bigr)\Bigr)+
\frac{y_i}{t^{d+1}}\partial_i\eta\Bigl(\frac{y}{t}\Bigr)dt= -\int_0^1\frac{d}{dt} \Bigl(\eta\Bigl(\frac{y}{t}\Bigr)\frac{1}{t^{d-1}}\Bigr)\,dt = -\eta(y).
\end{equation*}
This ends the proof since it follows that
\begin{equation}\label{eq:weak}
 \int_{\R^d} \chi^k(x,\omega) \partial_i \eta(x)\,dx =-\int_{\R^d} U^k_i(x;\omega) \eta(x) \,dx.
\end{equation}
One may think that the set of $\omega$ for which \eqref{eq:weak}
holds, depends on $\eta$. Since $C^\infty_0(\R^d)$ is separable we can remove such ambiguity considering a countable dense subset $\{\eta_n\}_{n\in\N}$ of $C^\infty_0(\R^d)$.
\end{proof}

So far we did not need more than the first moment for $\lambda^{-1}$ and $\Lambda$. To get more regularity and exploit the power of Sobolev's embedding theorems, we shall now assume \ref{ass:a.2}, namely, for $1/p+1/q<2/d$ we suppose that $\mean_\mu[\lambda^{-q}]$, $\mean_\mu[\Lambda^{p}]<\infty$. Such an assumption has the following consequence.
\begin{proposition}\label{prop:integrability} Assume \ref{ass:a.1} and \ref{ass:a.2}, then the corrector $\chi^k(\cdot,\omega)\in\F_{loc}^\omega$ for $\mu$-almost all $\omega\in\Omega$.
\end{proposition}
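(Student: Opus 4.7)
The plan is to show that for every ball $B \subset \R^d$ the function $u_B := \eta\chi^k$, with cutoff $\eta \in C_0^\infty(2B)$ and $\eta \equiv 1$ on $B$, lies in $\F^\omega$; since $u_B = \chi^k$ on $B$, this is exactly what $\F^\omega_{loc}$ requires. The argument combines (a) a Sobolev-embedding bootstrap that lifts the a-priori integrability of $\nabla\chi^k = U^k(\tau_\cdot\omega)$ into integrability of $\chi^k$ itself, strong enough to control the weighted term $(\chi^k)^2\Lambda$, and (b) a smooth approximation built from the cylinder functions in $\mathcal{C}$, whose gradients approximate $U^k$ in $L^2(a)$ by the very definition of $L^2_{pot}$.

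First I would collect the preliminary integrability. From $U^k \in L^2(a)$ one has $\mean_\mu[\lambda|U^k|^2] < \infty$, which together with $\mean_\mu[\lambda^{-q}]<\infty$ and H\"older yields $U^k \in L^{2q/(q+1)}(\Omega,\mu)$. By the ergodic theorem, for $\mu$-a.s.\ $\omega$ and every ball $B$, each of $\Lambda(\tau_\cdot\omega) \in L^p(B)$, $\lambda^{-1}(\tau_\cdot\omega) \in L^q(B)$, $U^k(\tau_\cdot\omega) \in L^{2q/(q+1)}(B)$ and $\langle aU^k,U^k\rangle(\tau_\cdot\omega) \in L^1(B)$ holds. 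Combined with $\chi^k(\cdot,\omega) \in L^1_{loc}$ and $\nabla\chi^k = U^k(\tau_\cdot\omega)$ from the preceding proposition, the Sobolev embedding of Remark \ref{rem:embedding} delivers $\chi^k(\cdot,\omega) \in L^\rho_{loc}$, since the Sobolev conjugate of $2q/(q+1)$ equals $\rho$. Assumption \ref{ass:a.2} is precisely what forces $\rho > 2p^*$, so in particular $\chi^k \in L^{2p^*}_{loc}$. With this in hand, $u_B \in L^2(\R^d,dx)$ because $\rho > 2$, and expanding $\nabla u_B = \eta U^k + \chi^k \nabla\eta$ together with $\langle a\xi,\xi\rangle \leq \Lambda|\xi|^2$ and H\"older bounds
\[
\int_{\R^d}\langle a\nabla u_B,\nabla u_B\rangle\,dx \leq 2\|\eta\|_\infty^2\int_{2B}\langle aU^k,U^k\rangle\,dx + 2\|\nabla\eta\|_\infty^2\,\|\chi^k\|_{L^{2p^*}(2B)}^2\,\|\Lambda\|_{L^p(2B)} < \infty.
\]

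The main remaining task is to exhibit a $C_0^\infty$-approximating sequence in the $\E^\omega_1$ norm. I would take $v_n \in \mathcal{C}$ with $\nabla v_n \to U^k$ in $L^2(a;\mu)$ and set $v_n^\omega(x) := v_n(\tau_x\omega)$, which is smooth in $x$ with $\nabla v_n^\omega(x) = (\nabla v_n)(\tau_x\omega)$. By Fubini and stationarity,
\[
\mean_\mu\int_{2B}\langle a^\omega(\nabla v_n^\omega - U^k(\tau_\cdot\omega)), \nabla v_n^\omega - U^k(\tau_\cdot\omega)\rangle\,dx = |2B|\,\mean_\mu[\langle a(\nabla v_n - U^k),\nabla v_n - U^k\rangle] \to 0,
\]
so along a subsequence the inner integral vanishes for $\mu$-a.s.\ $\omega$ (a diagonal argument over balls of rational radius yields a single full-measure set). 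The H\"older bound $\|V\|_{L^{2q/(q+1)}(2B)}^2 \leq \|\lambda^{-1}\|_{L^q(2B)}\int_{2B}\langle aV,V\rangle\,dx$ then gives $\nabla v_n^\omega \to U^k(\tau_\cdot\omega)$ in $L^{2q/(q+1)}(2B)$, and the Poincar\'e-Sobolev inequality on $2B$ produces constants $\alpha_n$ (the mean of $v_n^\omega - \chi^k$ over $2B$) such that $v_n^\omega - \alpha_n \to \chi^k$ in $L^\rho(2B)$, hence in $L^{2p^*}(2B)$ and $L^2(2B)$. The functions $\eta(v_n^\omega - \alpha_n) \in C_0^\infty(\R^d)$ then converge to $\eta\chi^k$ in $L^2(\R^d,dx)$, and by estimating the form energy of the difference exactly as in the previous display, with $U^k$ replaced by $\nabla v_n^\omega - U^k$ and $\chi^k$ by $v_n^\omega - \alpha_n - \chi^k$, both contributions vanish in the limit. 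The main technical point is precisely this: the exponent $\rho$ from the Sobolev step must exceed $2p^*$ to close the interplay between $\nabla\eta$, $\Lambda$ and the approximation, which is the role of the condition $1/p + 1/q < 2/d$ in \ref{ass:a.2}.
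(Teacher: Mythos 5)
Your proof is correct and takes essentially the same route as the paper: choose $f_n\in\mathcal{C}$ with $\nabla f_n\to U^k$ in $L^2(a)$, transfer to $\mu$-a.s.\ convergence of the gradients on balls, lift to $L^{2p^*}$-convergence of the (normalized) functions via the Sobolev embedding $W^{1,2q/(q+1)}(B)\hookrightarrow L^{2p^*}(B)$, and check $\E^\omega_1$-convergence of the cutoff products. The only differences are cosmetic (you normalize by the ball-mean $\alpha_n$ where the paper subtracts the value $f_n(\omega)$ at the origin, so that both the approximant and $\chi^k$ vanish at $0$) together with a welcome extra care in extracting a subsequence and diagonalizing over rational radii to obtain a single full-measure set of $\omega$'s, a step the paper leaves implicit.
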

\begin{proof} By construction, there exists $\{f_n\}_{\N}\subset\mathcal{C}$ such that $\nabla f_n\to U^k$ in $L^2(a)$. This implies that for any ball $B\subset\R^d$
\[
\int_B \langle a(x;\omega)\nabla f_n(x;\omega)-\nabla \chi^k(x,\omega), f_n(x;\omega)-\nabla \chi^k(x,\omega)\rangle\,dx\to 0.
\]
Observe that $g_n(x,\omega)=f_n(x;\omega)-f_n(\omega)$ belongs to $C^\infty(\R^d)$ and satisfies
\[
g_n(x,\omega)=\sum_{i=1}^d\int_0^1 x_j\partial_j f_n(tx;\omega)\,dt.
\]
By means of \ref{ass:a.2} it is immediate to prove that $g_n\to \chi^k$ in $W^{1,2q/(q+1)}(B)$ for any ball $B\subset\R^d$. We claim that $\eta g_n\to \eta \chi^k$ in $L^2(\R^d)$ with respect to $\Ew_1$, for any cut-off $\eta$ and $\mu$-almost surely, which by definition proves $\chi^k(\cdot,\omega)\in\F_{loc}^\omega$.
Indeed
\begin{multline*}
\int_{\R^d} \langle a \nabla (\eta g_n)-\nabla (\eta\chi^k),\nabla (\eta g_n)-\nabla (\eta\chi^k)\rangle\,dx\leq\\
2\int_{B} \langle a \nabla  g_n-\nabla \chi^k,\nabla g_n-\nabla \chi^k\rangle\,dx+2\|\nabla \eta\|_\infty^2\int_{B} \Lambda |g_n-\chi^k|^2\,dx\to 0
\end{multline*}
where the last integral goes to zero by $g_n\to \chi^k$ in $W^{1,2q/(q+1)}(B)$, and by means of the Sobolev's embedding theorem $W^{1,2q/(q+1)}(B)\hookrightarrow L^{2p^*}(B)$.
\end{proof}

\subsection{Harmonic coordinates and Poisson equation}

Now that we have the corrector we want to construct a weak solution to the Poisson equation $\mathcal{L^\omega} u = 0$ for $\mu$-almost all $\omega$. Consider, for $k=1,...,d$, the \emph{harmonic coordinates} to be the functions $y^k : \R^d\times \Omega\to \R$ defined by $y^k(x,\omega)\defeq x_k-\chi^k(x,\omega)$.

We say that a function $u\in\F_{loc}$ is $\Ew$-harmonic if $\Ew(u,\phi)=0$, $\forall \phi\in C_0^\infty(\R^d)$. The next proposition justifies the name harmonic coordinates.
\begin{proposition}\label{prop:solution} For $k=1,..,d$, the harmonic coordinates $x\to y ^k(x,\omega)$ are $\Ew$-harmonic $\mu$-almost surely.
\end{proposition}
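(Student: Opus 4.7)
The plan is to exploit the identity $\nabla y^k(x,\omega) = R^k(\tau_x \omega)$, where $R^k \defeq \pi^k - U^k \in (L^2_{pot})^\perp$, and to transfer the Hilbert-space orthogonality $\mean_\mu[\langle a R^k, V\rangle]=0$, valid for every $V \in L^2_{pot}$, into the spatial identity $\Ew(y^k,\phi) = 0$ for every test function $\phi \in C_0^\infty(\R^d)$ and $\mu$-a.e.~$\omega$. Since $\chi^k(\cdot,\omega) \in \F_{loc}^\omega$ by Proposition \ref{prop:integrability}, the functional
\[
F^\omega(\phi) \defeq \Ew(y^k,\phi) = \int_{\R^d} \langle a(\tau_x\omega)\, R^k(\tau_x\omega),\, \nabla\phi(x)\rangle\,dx
\]
is well defined. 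The pointwise bound $|aR^k|^2 \leq \Lambda\,\langle aR^k, R^k\rangle$ together with $\mean_\mu[\Lambda]<\infty$ and $R^k \in L^2(a)$ ensures that $\omega \mapsto F^\omega(\phi)$ lies in $L^1(\Omega,\mu)$.

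The core step is to show $\mean_\mu[g\, F^{(\cdot)}(\phi)] = 0$ for every $g \in L^\infty(\Omega)$. By Fubini and the shift invariance $\mean_\mu[g(\omega) h(\tau_x\omega)] = \mean_\mu[g(\tau_{-x}\omega) h(\omega)]$, I rewrite
\[
\mean_\mu\bigl[g\, F^{(\cdot)}(\phi)\bigr] \,=\, \mean_\mu\bigl[\langle aR^k,\, W\rangle\bigr], \qquad W(\omega) \defeq \int_{\R^d} g(\tau_{-x}\omega)\,\nabla\phi(x)\,dx.
\]
Setting $v(\omega) \defeq \int_{\R^d} g(\tau_y\omega)\,\phi(-y)\,dy$, an element of $\mathcal{C}$ by \eqref{eq:setC}, the identity displayed immediately after \eqref{eq:setC} together with the change of variable $x=-y$ yields $D_i v(\omega) = W_i(\omega)$, so $\nabla v = W \in L^2_{pot}$. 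The orthogonality of $R^k$ to $L^2_{pot}$ then forces $\mean_\mu\bigl[g\, F^{(\cdot)}(\phi)\bigr] = \mean_\mu[\langle aR^k, \nabla v\rangle] = 0$, and arbitrariness of $g \in L^\infty(\Omega)$ gives $F^\omega(\phi)=0$ for $\mu$-a.e.~$\omega$.

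To make the $\mu$-null set independent of $\phi$, I select a countable family $\{\phi_n\} \subset C_0^\infty(\R^d)$ dense in the $C^1$-topology on each compact subset. Using \ref{ass:a.3} to bound $\Lambda^\omega$ locally in $L^\infty$, and the fact that $\nabla\chi^k(\cdot,\omega)$ is locally integrable $\mu$-a.s., one sees that $\phi \mapsto F^\omega(\phi)$ is continuous in this topology for $\mu$-a.e.~$\omega$, extending the vanishing to all $\phi \in C_0^\infty(\R^d)$. The main technical subtlety is the clean bookkeeping of the Fubini/stationarity interchange and the identification of $W$ as the gradient of an element of $\mathcal{C}$; once that is in place, the Weyl orthogonality closes the argument.
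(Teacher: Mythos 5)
Your proof is correct and takes essentially the same route as the paper's. The paper starts from the stationary orthogonality $\mean_\mu[\langle a R^k, Df\rangle]=0$ for $f\in\mathcal{C}$, integrates against a fixed $\phi\in C_0^\infty(\R^d)$, and uses Fubini plus the group action $T_x$ to shift everything onto $f(\omega)$, concluding from density of $\mathcal{C}$ in $L^p(\Omega,\mu)$; you run the same computation in the opposite direction, pairing the spatial functional $F^\omega(\phi)$ against an arbitrary $g\in L^\infty(\Omega)$ and recognizing the resulting $W$ as $\nabla v$ for $v\in\mathcal{C}$, then invoking the Weyl orthogonality. The only cosmetic difference is that your write-up leans on Proposition~\ref{prop:integrability} and on \ref{ass:a.3} to justify well-definedness and the separability argument, whereas the paper's own proof needs neither \ref{ass:a.2} nor \ref{ass:a.3} at this stage (the bound $\mean_\mu[|aR^k|]\le\mean_\mu[\Lambda]^{1/2}\mean_\mu[\langle aR^k,R^k\rangle]^{1/2}<\infty$, which you already derive, is enough both for the Fubini interchange and for the local-uniform continuity of $\phi\mapsto F^\omega(\phi)$); this is harmless but slightly overstates the hypotheses actually used.
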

\begin{proof}
We have to prove that $\mu$-almost surely, for all $\phi\in C_0^\infty(\R^d)$
\[
\Ew(y^k,\phi)= \sum_{i,j}\int_{\R^d} a_{ij}(x;\omega)\partial_iy ^k(x,\omega)\partial_j\phi(x)\,dx = 0.
\]
By construction of the corrector, the stationarity of the environment and the fact that $T_x \mathcal{C}= \mathcal{C}$, we have that
\[
\sum_{i,j}\mean_\mu[a_{ij}(x;\omega)\partial_iy ^k(x,\omega)D_j f(\omega)] = 0,\quad \forall x\in\R^d,\forall f\in\mathcal{C}.
\]
Now fix $\phi\in C_0^\infty(\R^d)$ and integrate against it, we get that for all $f\in \mathcal{C}$
\begin{align*}
0&=\sum_{i,j}\int_{\R^d}\phi(x)\mean_\mu[a_{ij}(x;\omega)\partial_iy ^k(x,\omega)D_j f(\omega)]\,dx\\
&=\sum_{i,j}\mean_\mu\Bigl[a_{ij}(0;\omega)\partial_iy ^k(0,\omega)\int_{\R^d}D_j f(\tau_{-x}\omega)\phi(x)\,dx\Bigr]\\
&=\mean_\mu\Bigl[f(\omega)\sum_{i,j}\int_{\R^d}a_{ij}(x;\omega)\partial_iy ^k(x,\omega)\partial_j\phi(x)\,dx\Bigr].
\end{align*}
Since $\mathcal{C}\subset L^{p}(\Omega,\mu)$ for all $p\geq 1$ densely, it follows that
\begin{equation}\label{eq:poisson}
\sum_{i,j}\int_{\R^d}a_{ij}(x;\omega)\partial_iy ^k(x,\omega)\partial_j\phi(x)\,dx = 0,\quad\mu\mbox{-a.s.}
\end{equation}
this ends the proof. To be precise, one should observe that $C_0^\infty(\R^d)$ is separable, which ensures that \eqref{eq:poisson} is satisfied for all $\phi\in C_0^\infty(\R^d)$, $\mu$-almost surely.
\end{proof}

\begin{remark}
Observe that neither \ref{ass:a.2} nor \ref{ass:a.3} is used in the construction of the harmonic coordinates.
\end{remark}

\begin{remark}\label{rem:bound}
If we define $y_\epsilon^k(x,\omega)\defeq \epsilon y^k(x/\epsilon,\omega)$, then an application of the ergodic theorem yields
\begin{equation}\label{eq:bound}
\lim_{\epsilon\to 0}\int_{B_R}\langle a(x/\epsilon;\omega)\nabla_x y_\epsilon^k(x;\omega),\nabla_x y_\epsilon^k(x;\omega) \rangle\,dx = \mean_\mu[\langle a(\pi_k-U^k),\pi_k-U^k\rangle]|B_R|<\infty.
\end{equation}
which in view of \ref{ass:a.2} and the Sobolev's embedding theorem implies that
\begin{equation}
\limsup_{\epsilon\to 0}\|1_{B_R} y^k_\epsilon\|_{\rho}<\infty,
\end{equation}
where both limits hold $\mu$-almost surely.
\end{remark}

\subsection{Martingales and Harmonic coordinates}

We will assume as usual \ref{ass:a.1}, \ref{ass:a.2} and \ref{ass:a.3}.

In a situation where $L^\omega = \nabla\cdot( a^\omega \nabla\,)$ is well defined and associated to the process $X^\omega_t$, the fact that $L^\omega y(x,\omega) = 0$, would imply that $y(X_t^\omega,\omega)$ is a martingale by It\^o's formula. In our case we lack the regularity to use the theory coming from stochastic differential equations and we must rely on Dirichlet Forms technique. We know that $y^k(x,\omega)$ is $\E^\omega$-harmonic, which in a weaker sense, is analogous to say that $y^k$ is $L^\omega$-harmonic.

We will use the following theorem due to Fukushima, \cite{Fukushima1987}[ Theorem 3.1].

\begin{theorem}\label{thm:martingale}
	Fix a point $x_0$ and assume the following conditions for a process $\mathbf{N}=(Z_t,\PR_x)$ associated to $(\E,\F)$ on $L^2(\R^d,dx)$, and for a function $u:\R^d\to\R$.
	\begin{itemize}
		\item[(i)] For all $t>0$ the transition semigroup $\mathcal{P}_t$ of $\mathbf{N}$ satisfies $\mathcal{P}_t\mathds{1}_A(x_0)=0$  whenever $Cap(A)=0$.
		\item[(ii)] $u\in \mathcal{F}_{loc}$, u is continuous and $\E$-harmonic.
		\item[(iii)] Let $\nu_{\langle u \rangle}$ be the energy measure of $u$, namely the only measure such that
		\[
		\int_{\R^d} v(x)\,d\nu_{\langle u \rangle}(dx)=2\E(uv,v)-\E(u^2,v),\quad v\in C^\infty_0(\R^d).
		\]
		We assume that $\nu_{\langle u \rangle}$ is absolutely continuous with respect to the Lebesgue measure $\nu_{\langle u \rangle} = fdx$ and that the density function $f$ satisfies
		\[
		\mean_{x_0}\biggl[\int_0^t f(Z_s)\,ds\biggr]<\infty,\quad t>0.
		\]
	\end{itemize}
	Then $M_t=u(Z_t)-u(Z_0)$ is a $\PR_{x_0}$-square integrable martingale with
	\[
	\langle M \rangle_t = \int_0^t f(Z_s)\,ds,\quad t>0,\quad \PR_{x_0}\mbox{-a.s.}
	\]
\end{theorem}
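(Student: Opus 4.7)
The plan is to apply Fukushima's decomposition theorem locally along an exhaustion of $\R^d$ by balls and then to assemble the pieces into a global statement. First, since $u\in\F_{loc}$ is continuous (hence quasi-continuous), for each $n\in\N$ I choose $u_n\in\F$ agreeing with $u$ on the ball $B_n(x_0)$. The global decomposition theorem \cite[Theorem 5.2.2]{fukushima1994dirichlet} then produces
\[
u_n(Z_t)-u_n(Z_0)=M_t^{u_n}+N_t^{u_n},
\]
q.e.\ in the starting point, where $M^{u_n}$ is a martingale additive functional of finite energy and $N^{u_n}$ is a continuous additive functional of zero energy. Setting $\tau_n\defeq \inf\{t>0:Z_t\notin B_n(x_0)\}$, the strong locality of $\E$ together with the identity $u=u_n$ on $B_n(x_0)$ let me rewrite the decomposition, stopped at $\tau_n$, as a decomposition of $u(Z_{t\wedge \tau_n})-u(Z_0)$.

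The crucial step is to show that $\E$-harmonicity of $u$ forces $N^{u_n}_{t\wedge \tau_n}\equiv 0$. By the bilinear form characterization of the zero-energy part, $N^{u_n}$ is uniquely determined by the linear functional $v\mapsto \E(u_n,v)$ on $\F$; restricted to $v\in \F_{B_n(x_0)}$ this functional vanishes by the very definition of $\E$-harmonicity combined with the density of $C_0^\infty(B_n(x_0))$ in $\F_{B_n(x_0)}$. By the one-to-one Revuz correspondence between smooth signed measures and CAFs, this kills $N^{u_n}$ up to $\tau_n$.

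Next I identify the quadratic variation. By construction the energy measure of $M^{u_n}$ equals $\nu_{\langle u_n\rangle}$, which on $B_n(x_0)$ coincides with $\nu_{\langle u\rangle}=f\,dx$ by the local nature of the energy measure. The Revuz correspondence for the process $\mathbf{N}$ associated with $(\E,\F)$ on $L^2(\R^d,dx)$ identifies the PCAF whose Revuz measure is $f\,dx$ with $\int_0^{\cdot} f(Z_s)\,ds$, so
\[
\langle M^{u_n}\rangle_{t\wedge \tau_n}=\int_0^{t\wedge \tau_n} f(Z_s)\,ds.
\]
The integrability in $(iii)$ makes the right-hand side $\PR_{x_0}$-integrable, uniformly in $n$. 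As $\tau_n\uparrow\infty$, the stopped processes glue consistently to a continuous martingale $M_t=u(Z_t)-u(Z_0)$ with $\langle M\rangle_t=\int_0^t f(Z_s)\,ds$; Doob's inequality and $(iii)$ upgrade it to a square integrable martingale.

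Assumption $(i)$ is what allows me to transfer all of the preceding statements, which the decomposition theorem only delivers quasi-everywhere, to the specific starting point $x_0$: any set of zero capacity is, by $(i)$, never entered under $\PR_{x_0}$, so every q.e.\ identity survives $\PR_{x_0}$-almost surely. I expect the main obstacle to be the rigorous cancellation of $N^{u_n}$ up to $\tau_n$, i.e.\ the passage from the weak harmonicity $\E(u,\phi)=0$ for $\phi\in C_0^\infty(\R^d)$ to a statement against all admissible test functions in $\F_{B_n(x_0)}$, and the careful control of the boundary contribution at $\tau_n$ when gluing the stopped pieces into one continuous martingale.
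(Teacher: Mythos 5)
The paper itself does not prove Theorem~\ref{thm:martingale}; it is cited verbatim from Fukushima, Nakao and Takeda \cite{Fukushima1987}, Theorem~3.1, so there is no in-paper argument to compare yours against. Your sketch does, however, follow the standard Dirichlet-form route used in that reference (localize via an exhaustion, apply the Fukushima decomposition, kill the zero-energy part by harmonicity, identify the bracket through the energy measure, and use condition~(i) to promote quasi-everywhere statements to $\PR_{x_0}$-a.s.\ statements), so the overall architecture is right.

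Two points need tightening. First, the argument you give for cancelling $N^{u_n}$ up to $\tau_n$ invokes the ``one-to-one Revuz correspondence between smooth signed measures and CAFs,'' but the zero-energy part $N^{u}$ of a Fukushima decomposition is in general \emph{not} of bounded variation and therefore carries no signed Revuz measure; the correspondence you cite does not apply to it. The correct tool is the local characterization of the decomposition for harmonic functions (e.g.\ \cite[Theorem 5.4.2]{fukushima1994dirichlet} and its proof): from $\E(u_n,v)=0$ for all $v\in\F_{B_n(x_0)}$ one deduces directly that, for q.e.\ $x\in B_n(x_0)$, $u(Z_{t\wedge\tau_n})-u(Z_0)$ is a $\PR_x$-local martingale whose bracket has Revuz measure $\mathds{1}_{B_n(x_0)}\nu_{\langle u\rangle}$. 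This bypasses $N^{u_n}$ entirely and is cleaner, because it does not require knowing $N^{u_n}$ globally at all. Second, the gluing in $n$ needs $\tau_n\uparrow\zeta$ and an argument that, under (iii), $\PR_{x_0}(\zeta\le t)=0$ for every fixed $t$; you assert ``$\tau_n\uparrow\infty$'' but the minimal diffusion is not assumed conservative in the theorem's hypotheses. This is a genuine (if minor and fixable) gap: one should either observe that the uniform bound $\mean_{x_0}[\int_0^{t\wedge\tau_n}f(Z_s)\,ds]\le\mean_{x_0}[\int_0^tf(Z_s)\,ds]<\infty$, together with continuity of $u$ and the a.s.\ convergence $u(Z_{t\wedge\tau_n})\to u(Z_{t\wedge\zeta})$, forces $\zeta>t$ $\PR_{x_0}$-a.s.\ before one can write $M_t=u(Z_t)-u(Z_0)$, or explicitly restrict attention to a conservative process as in the paper's application.
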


We want to apply Theorem \ref{thm:martingale} to the function $u(x,\omega)=\sum_k \lambda_k y^k(x,\omega)$, being an $\E^\omega$-harmonic function, and to the minimal process $\mathbf{M}^\omega = (X^\omega_t,\PR^\omega_x)$, $x\in\R^d$.  We fix the starting point to be $x_0=0$. Some attention is required to check that every assumption of Theorem \ref{thm:martingale} is satisfied for $\mu$-almost all $\omega\in\Omega$.

By construction, since $\mathbf{M}^\omega = (X^\omega_t,\PR^\omega_x)$, $x\in\R^d$ is the minimal diffusion for almost all $\omega\in\Omega$, it follows that $\mathcal{P}_t 1_A(0)=\int_{A} p_t^\omega(0,y)\,dy=0$ whenever $Cap(A)=0$, so that $(i)$ is satisfied. Indeed $Cap(A)=0$ implies that the Lebesgue measure of $A$ is zero \cite[Page 68]{fukushima1994dirichlet}.

Assumption $(ii)$ is  satisfied for almost all $\omega$ in view of Proposition \ref{prop:solution}, Proposition \ref{prop:integrability} and \ref{ass:a.3} which assures the continuity of $x\to y^k(x,\omega)$ for $\mu$-almost all $\omega\in\Omega$ by classical results in elliptic partial differential equations with locally uniformly elliptic coefficients \cite[Gilbarg and Trudinger]{gilbarg2001elliptic}.

In order to check assumption $(iii)$ we have first to understand $\nu_{\langle u\rangle}$. According to \cite[Theorem 3.2.2]{fukushima1994dirichlet} and using the fact that $y^k$ are weakly differentiable, the density $f(x,\omega)$ of $\nu_{\langle u\rangle}$ with respect to the Lebesgue measure is given by
\begin{equation*}
f(x,\omega)=2\sum_{i,j} \partial_i u(x;\omega)\partial_j u(x;\omega)\, a_{ij}(x;\omega) = 2\sum_{k,h} \lambda_k \lambda_h \Bigl(\sum_{i,j} \partial_i y^k(x;\omega)\partial_j y^h(x;\omega)\, a_{ij}(x;\omega)\Bigr)
\end{equation*}
which we can rewrite as $f(x,\omega)= 2\langle q(x,\omega)\lambda,\lambda\rangle$, with
\begin{equation*}
q^{hk}(\omega)\defeq\sum_{i,j} \partial_i y^k(0;\omega)\partial_j y^h(0;\omega)\, a_{ij}(\omega)=\sum_{i,j} (U_i^k(\omega)-\delta_{ik})(U_j^h(\omega)-\delta_{jh})\, a_{ij}(\omega).
\end{equation*}
Next we compute, using the stationarity of the environment process
\begin{equation*}
\int_{\Omega } \mean_0^\omega\biggl[\int_0^t f(X^\omega_s;\omega)\,ds\biggr]\,d\mu =2\int_{\Omega } \mean_0^\omega\biggl[\int_0^t \langle q(\psi^\omega_s)\lambda,\lambda\rangle\,ds\biggr]\,d\mu=2t\int_{\Omega}\langle q(\omega)\lambda,\lambda\rangle\,d\mu,
\end{equation*}
which is finite by construction, since $U\in L^2(a)$. In particular $(iii)$ is satisfied. It follows the following theorem:

\begin{theorem}\label{thm:harmoniccoho} Assume \ref{ass:a.1},\ref{ass:a.2} and \ref{ass:a.3}. Then $y(X_t^\omega,\omega)$ is a $\PR^\omega_0$-square integrable martingale with covariation given by
\[
\langle y^k(X_t^\omega,\omega), y^h(X_t^\omega,\omega)\rangle_t =2 \int_0^t \sum_{i,j}a_{ij}(X^\omega_s,\omega) (\partial_i\chi^k(X^\omega_s,\omega)-\delta_{ik})(\partial_j\chi^h(X^\omega_s,\omega)-\delta_{jh})\, \,ds,
\]
for $\mu$-almost all $\omega\in\Omega$.
\end{theorem}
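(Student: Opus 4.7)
The plan is to apply Fukushima's Theorem \ref{thm:martingale} to the linear combination $u_\lambda(x,\omega) \defeq \sum_{k=1}^d \lambda_k y^k(x,\omega)$ for a fixed $\lambda\in\R^d$ (and for the minimal diffusion $\mathbf{M}^\omega$ with distinguished starting point $x_0=0$). This will yield that $u_\lambda(X_t^\omega,\omega)$ is a $\PR_0^\omega$-square integrable martingale with an explicit quadratic variation; polarization in $\lambda$ over a countable dense set then delivers the covariation formula for the individual coordinates $y^k(X_t^\omega,\omega)$.

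First I would verify the three hypotheses of Theorem \ref{thm:martingale} for $\mu$-almost every $\omega$. Hypothesis (i) is immediate because, $\mathbf{M}^\omega$ being the minimal diffusion of Section \ref{sec:mindiff}, the semigroup $\mathcal{P}_t^\omega$ has a transition kernel $p_t^\omega(0,y)$ absolutely continuous in $y$; since $\mathrm{Cap}(A)=0$ forces $|A|=0$, one has $\mathcal{P}_t^\omega 1_A(0)=0$. Hypothesis (ii) breaks into three facts already available: $u_\lambda\in\F_{loc}^\omega$ and is $\Ew$-harmonic by Propositions \ref{prop:integrability} and \ref{prop:solution} together with linearity, while the continuity of $x\mapsto u_\lambda(x,\omega)$ follows from \ref{ass:a.3} (which makes $a^\omega$ locally uniformly elliptic and locally bounded) combined with the classical De Giorgi--Nash--Moser continuity theorem for local weak solutions.

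The core of the argument is hypothesis (iii). Using \cite[Theorem 3.2.2]{fukushima1994dirichlet} I would identify the density of the energy measure $\nu_{\langle u_\lambda\rangle}$ with respect to Lebesgue as
\[
f_\lambda(x,\omega)\;=\;2\langle a^\omega(x)\nabla_x u_\lambda(x,\omega),\nabla_x u_\lambda(x,\omega)\rangle\;=\;2\langle q(\tau_x\omega)\lambda,\lambda\rangle,
\]
where $q^{hk}(\omega)=\sum_{i,j}(U_i^k(\omega)-\delta_{ik})(U_j^h(\omega)-\delta_{jh})a_{ij}(\omega)$. To control $\mean_0^\omega[\int_0^t f_\lambda(X_s^\omega,\omega)\,ds]$, I would integrate against $\mu$, use Fubini, and invoke the stationarity of the environment process $\psi_s^\omega=\tau_{X_s^\omega}\omega$ established in Section 3.2 to obtain
\[
\int_\Omega \mean_0^\omega\!\left[\int_0^t f_\lambda(X_s^\omega,\omega)\,ds\right]d\mu \;=\; 2t\,\mean_\mu[\langle q\lambda,\lambda\rangle],
\]
which is finite because $U^k\in L^2_{pot}\subset L^2(a)$ and $\pi^k\in L^2(a)$ by \ref{ass:a.2}. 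Fubini then transfers this to $\mean_0^\omega[\int_0^t f_\lambda(X_s^\omega,\omega)\,ds]<\infty$ for $\mu$-a.e.\ $\omega$, verifying (iii). Fukushima's theorem now gives $\langle u_\lambda(X_\cdot^\omega,\omega)\rangle_t=\int_0^t f_\lambda(X_s^\omega,\omega)\,ds$.

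The last step is a polarization. Working with $\lambda\in\mathbb{Q}^d$ (countable) I can extract a single $\mu$-full-measure set of $\omega$'s on which the martingale property and the quadratic variation formula hold simultaneously for every rational $\lambda$; bilinearity of the covariation in the integrands identifies $\langle y^k(X_\cdot^\omega,\omega),y^h(X_\cdot^\omega,\omega)\rangle_t$ with the stated expression. I expect the main obstacle to be bookkeeping exceptional sets rather than any deep analysis: the continuity of $y^k$ requires \ref{ass:a.3}, the integrability in (iii) is where the ergodic structure is genuinely used, and one must be careful that the $\mu$-null sets appearing across the different hypotheses and across $\lambda$ can be unified into one.
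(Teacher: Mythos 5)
Your proposal matches the paper's proof essentially step for step: the paper also applies Fukushima's Theorem \ref{thm:martingale} to $u(x,\omega)=\sum_k\lambda_k y^k(x,\omega)$ with $x_0=0$, checks (i) via absolute continuity of the minimal diffusion's kernel and the fact that $\mathrm{Cap}(A)=0$ forces $|A|=0$, checks (ii) via Propositions \ref{prop:integrability} and \ref{prop:solution} plus local continuity from \ref{ass:a.3}, and checks (iii) by identifying the energy density $f=2\langle q(\tau_x\omega)\lambda,\lambda\rangle$ and integrating against $\mu$ using the environment process's stationarity and $U^k\in L^2(a)$. The only thing you spell out that the paper leaves implicit is the polarization over $\lambda\in\mathbb{Q}^d$ to pass from the quadratic variation of $u_\lambda$ to the stated covariation of $y^k$ and $y^h$; that is a harmless and reasonable addition.
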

\begin{proof}
Above.
\end{proof}

\section{Proof of the Invariance Principle}\label{sec:proof}

In Section \ref{sec:corrector} we constructed the function $\chi,y:\R^d\times\Omega\to\R^d$ in a way that we can decompose the process $X^\omega$ as
\[
X_t^\omega = y(X_t^\omega,\omega)+\chi(X_t^\omega,\omega),
\]
in particular, we proved in Theorem \ref{thm:harmoniccoho} that $y(X_t^\omega,\omega)$ is a martingale. In order to get a quenched invariance principle for the process $X_t^{\epsilon,\omega} = \epsilon X_{t/\epsilon^2}^\omega$ we will need to prove that $\epsilon \chi(X_t^{\epsilon,\omega}/\epsilon,\omega)$ is converging to zero in law and that the quadratic variation of the martingale is converging to a constant.

As first result on the decay of the corrector as $\epsilon\to 0$ we have the following Lemma.

 \begin{lemma}\label{lem:control} For all $R>0$ and for $\mu$-almost all $\omega\in\Omega$
 \[
 \lim_{\epsilon\to0}\|y^k_\epsilon(x;\omega)-x_k\|_{2 p^*,B_R}=\lim_{\epsilon\to0}\|\chi^k_\epsilon(x;\omega)\|_{2 p^*,B_R}=0.
 \]
 \end{lemma}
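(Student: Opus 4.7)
Since $y_\epsilon^k(x,\omega)-x_k=-\chi_\epsilon^k(x,\omega)$ pointwise, the two quantities in the statement have identical norms, and it suffices to prove $\|\chi^k_\epsilon\|_{2p^*,B_R}\to 0$ $\mu$-a.s. The plan is compactness together with identification of the limit: show $\{\chi_\epsilon^k\}_\epsilon$ is precompact in $L^{2p^*}(B_R)$, that every subsequential limit is constant in $x$, and finally that this constant must vanish.

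\emph{Precompactness.}
H\"older's inequality applied to $U^k\in L^2(a)$ gives $\mean_\mu[|U^k|^{2q/(q+1)}]\leq \mean_\mu[\lambda|U^k|^2]^{q/(q+1)}\mean_\mu[\lambda^{-q}]^{1/(q+1)}<\infty$, so $U^k\in L^{2q/(q+1)}(\Omega,\mu)$. The multiparameter ergodic theorem then yields, $\mu$-a.s.,
\[
\int_{B_R}|\nabla\chi^k_\epsilon(x,\omega)|^{2q/(q+1)}dx=\epsilon^d\int_{B_{R/\epsilon}}|U^k(\tau_y\omega)|^{2q/(q+1)}dy\xrightarrow{\epsilon\to 0}|B_R|\mean_\mu[|U^k|^{2q/(q+1)}],
\]
so $\nabla\chi^k_\epsilon$ is bounded in $L^{2q/(q+1)}(B_R)$. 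Remark \ref{rem:bound} applied to $y^k_\epsilon$, combined with $\chi^k_\epsilon=\pi^k-y^k_\epsilon$, gives $\chi^k_\epsilon$ bounded in $L^\rho(B_R)$. The condition $1/p+1/q<2/d$ is equivalent to $2p^*<\rho$ (the Sobolev conjugate of $2q/(q+1)$), so Rellich--Kondrachov yields the compact embedding $W^{1,2q/(q+1)}(B_R)\hookrightarrow L^{2p^*}(B_R)$, and $\{\chi^k_\epsilon\}$ is precompact in $L^{2p^*}(B_R)$. Moreover $\mean_\mu[U^k_j]=0$ (Lemma \ref{lemma:L2pot}(i)) and the ergodic theorem force $\nabla\chi^k_\epsilon=U^k(\tau_{\cdot/\epsilon}\omega)\rightharpoonup 0$ weakly in $L^{2q/(q+1)}(B_R)$, so any strong $L^{2p^*}$-limit $\phi(\cdot,\omega)$ has vanishing weak gradient and hence equals a constant $c(\omega)$.

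\emph{The constant vanishes.}
I would identify $c=0$ by combining two ingredients. First, from $\nabla_x\chi^k(\cdot,\tau_h\omega)=U^k(\tau_{\cdot+h}\omega)=\nabla_x\chi^k(\cdot+h,\omega)$ and $\chi^k(0,\cdot)=0$, one has the cocycle identity $\chi^k(x+h,\omega)=\chi^k(x,\tau_h\omega)+\chi^k(h,\omega)$, which rescales to $\chi^k_\epsilon(x+\epsilon h,\omega)=\chi^k_\epsilon(x,\tau_h\omega)+\epsilon\chi^k(h,\omega)$. Passing to the $L^{2p^*}(B_R)$-limit along the convergent subsequence, for each $h$ in a countable dense subset of $\R^d$ and using stationarity of $\mu$ to transport the $\mu$-null set under $\tau_h$, gives $c(\omega)=c(\tau_h\omega)$ $\mu$-a.s., so ergodicity forces $c$ to be a deterministic constant. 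Second, the integral representation $\chi^k(y,\omega)=\int_0^1 y\cdot U^k(\tau_{ty}\omega)dt$ and Jensen's inequality yield
\[
\mean_\mu[|\chi^k_\epsilon(x,\omega)|^{2q/(q+1)}]\leq |x|^{2q/(q+1)}\mean_\mu[|U^k|^{2q/(q+1)}],
\]
so by Fubini $\omega\mapsto\int_{B_R}\chi^k_\epsilon(x,\omega)dx$ is bounded in $L^{2q/(q+1)}(\Omega,\mu)$ and, since $2q/(q+1)>1$, uniformly integrable. The convergent subsequence gives $\int_{B_R}\chi^k_{\epsilon_n}dx\to c|B_R|$ $\mu$-a.s., so Vitali's theorem gives $\mean_\mu\bigl[\int_{B_R}\chi^k_{\epsilon_n}dx\bigr]\to c|B_R|$; the left side equals $0$ for every $\epsilon$ by $\mean_\mu[\chi^k(y,\cdot)]=0$, so $c=0$. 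A standard subsequence argument upgrades this to convergence of the full family.

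The main obstacle is the last step: neither ergodicity alone (which only yields that $c$ is deterministic) nor the mean-zero identity alone (which only yields $\mean_\mu[c]=0$) is enough, so the argument must combine the cocycle-ergodicity piece and the uniform-integrability piece.
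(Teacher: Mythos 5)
Your compactness reduction is sound and essentially matches the paper's strategy: the bound in \eqref{eq:bound}, H\"older, and Rellich--Kondrachov give that $\{\chi_\epsilon^k(\cdot,\omega)\}_\epsilon$ is precompact in $L^{2p^*}(B_R)$ with all subsequential limits constant in $x$. The paper uses exactly this embedding; it differs only in how it identifies the limit.

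The gap is in the identification step. Compactness gives subsequential limits only, so ``the'' constant $c(\omega)$ is not yet well defined: different subsequences (depending on $\omega$) may yield different constants. Your cocycle plus ergodicity argument, made rigorous, shows that $c^+(\omega):=\limsup_{\epsilon\to 0}|B_R|^{-1}\int_{B_R}\chi_\epsilon^k\,dx$ and the corresponding $\liminf$ $c^-(\omega)$ are $\tau$-invariant, hence deterministic constants $c^\pm$. Your uniform-integrability plus mean-zero argument then only delivers, via (reverse) Fatou for uniformly integrable families, the sandwich $c^-\leq 0\leq c^+$; it does \emph{not} force $c^+=c^-$. Your sentence ``The convergent subsequence gives $\int_{B_R}\chi^k_{\epsilon_n}\,dx\to c|B_R|$ $\mu$-a.s.'' tacitly assumes a single $\omega$-independent subsequence with a unique limit, which is precisely what has to be proved. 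So as written the argument is circular at the crucial point. The paper avoids this entirely: it shows directly, for any $\eta\in C_0^\infty(B_R)$, that $\int y^k_\epsilon\eta\,dx\to\int x_k\eta\,dx$ $\mu$-a.s., by writing $y_\epsilon^k(x)=\int_0^1 \nabla y_\epsilon^k(tx)\cdot x\,dt$ and splitting the $t$-integral at the $\omega$-dependent scale $\epsilon/\epsilon(\omega)$ dictated by the ergodic theorem (see \eqref{eq:c1}); the inner part is an exact $O(\epsilon)$ term by the fundamental theorem of calculus and the outer part is controlled by the ergodic theorem, giving a genuine $\mu$-a.s. weak $L^2$ limit, which is then upgraded by compactness. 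A fix for your route that avoids the circularity, if you wish to keep the compactness-and-identification shape, is to exploit the exact self-similarity $\chi^k_{t\epsilon}(x,\omega)=t\,\chi^k_\epsilon(x/t,\omega)$, which yields $|B_R|^{-1}\int_{B_R}\chi^k_{t\epsilon}\,dx=t\,|B_{R/t}|^{-1}\int_{B_{R/t}}\chi^k_\epsilon\,dx$; together with the observation that the (nonempty, compact) set of subsequential limits in $L^{2p^*}(B_R)$ is independent of $R$, this forces $c^+=tc^+$ and $c^-=tc^-$ for all $t\in(0,1]$, hence $c^+=c^-=0$, without needing the uniform integrability or mean-zero computations at all.
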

\begin{proof} It is enough to show that for any $\eta\in C_0^\infty(B_R)$ we have
\[
 \lim_{\epsilon\to 0} \int_{\R^d} y^k_\epsilon(x;\omega)\eta(x)\,dx=\int_{\R^d} x_k \eta(x)\,dx.
 \]
Indeed the above property implies the weak convergence $y_\epsilon^k\rightharpoonup
 x_k$ in $L^2(B_R)$. This gives the strong convergence in $L^{2p^*}(B_R)$, because $W^{1,2q/(q+1)}(B_R)$ is compactly embedded in $L^{2p^*}(B_R)$ and the sequence $\{y_\epsilon\}_{\epsilon>0}$ is bounded in $W^{1,2q/(q+1)}(B_R)$ by \eqref{eq:bound}.

Since $\partial_j y^k(x;\omega) =\delta_{jk}- U_j^k(\tau_x\omega)$ and $\mean_\mu[U_j^k]=0$, the ergodic theorem implies that for each $\delta>0$ arbitrary, $\mu$-almost surely, there exists $\epsilon(\omega)>0$ such that for all $\epsilon,s >0$ with $s > \epsilon/\epsilon(\omega)$
 \begin{equation}\label{eq:c1}
 \biggl|\sum_j\int_{B_R} \partial_j y_\epsilon^k(sx;\omega) x_j \eta(x)\,dx-\int_{\R^d} x_k \eta(x)\,dx\biggr|\leq \delta.
 \end{equation}
 Notice that
  \begin{align}\label{eq:1}
  \int_{\R^d} y^k_\epsilon(x;\omega)\eta(x)\,dx &= \sum_j\int_{B_R}\int_0^1 \partial_j y_\epsilon^k(tx;\omega) x_j \eta(x)\, dt \,dx \notag\\
  &=\sum_j\int_0^1\int_{B_R} \partial_j y_\epsilon^k(tx;\omega) x_j \eta(x)\,dx\, dt.
  \end{align}
 We split the integral in \eqref{eq:1} as the sum
 \[
 \sum_j\int_0^{\epsilon/\epsilon(\omega)}\int_{B_R} \partial_j y_\epsilon^k(tx) x_j \eta(x)\,dx\, dt + \sum_j\int_{\epsilon/\epsilon(\omega)}^1\int_{B_R} \partial_j y_\epsilon^k(tx) x_j \eta(x)\,dx\, dt,
 \]
 now we estimate each of the two terms.
 We can rewrite the second term as
 \[
 (1-\epsilon/\epsilon(\omega))\int_{B_R} x_j \eta(x)\,dx+\int_{\epsilon/\epsilon(\omega)}^1 r_{\epsilon/t}\,dt,
 \]
 where the second integral is bounded by $\delta$, in view of \eqref{eq:c1}. For what concerns the first part, we can easily compute
 \begin{align*}
\sum_{j}\int_{0}^{\epsilon/\epsilon(\omega)}\int_{B_R} \partial_j y_\epsilon^k(tx) x_j \eta(x)\,dx = \epsilon/\epsilon(\omega) \int_{B_R} \epsilon(\omega)y^k(x/\epsilon(\omega)) \eta(x)\,dx.
 \end{align*}
 Hence the first part is bounded by $c\cdot (\epsilon/\epsilon(\omega))$ for a constant $c>0$. Finally this yields
 \[
 \limsup_{\epsilon\to 0}\biggl|\int_{\R^d} y^k_\epsilon(x;\omega)\eta(x)\,dx-\int_{\R^d} x_k \eta(x)\,dx\biggr|\leq \delta
 \]
 with $\delta$ arbitrarily chosen.
 \end{proof}

\begin{proposition}\label{prop:sublinearity}
For all $R>0$,
\begin{equation}\label{eq:sublin}
\lim_{\epsilon\to 0} \sup_{|x|\leq R} \epsilon|\chi(x/\epsilon,\omega)|  = 0,\quad \mu\mbox{-almost surely.}
\end{equation}
\end{proposition}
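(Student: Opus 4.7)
}

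The plan is to apply the Moser-type maximal inequality of Corollary \eqref{cor:moser} to the rescaled corrector $\chi^k_\epsilon(x,\omega) \defeq \epsilon\chi^k(x/\epsilon,\omega)$ and to control the right-hand side using the $L^{2p^*}$ convergence of Lemma \ref{lem:control} together with the multidimensional ergodic theorem. First I would observe that, since $y^k=\pi^k-\chi^k$ is $\E^\omega$-harmonic by Proposition \ref{prop:solution}, the corrector $\chi^k(\cdot,\omega)$ is $\mu$-a.s.\ a weak solution of $\nabla\cdot(a^\omega\nabla \chi^k)=\nabla\cdot(a^\omega\nabla\pi^k)$ with $\pi^k(x)=x_k$. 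A change of variables $y=x/\epsilon$ then shows that $\chi^k_\epsilon\in\F^{\omega,\epsilon}_{loc}$ (where $a^\omega_\epsilon(x)\defeq a^\omega(x/\epsilon)$, and analogously for $\lambda_\epsilon,\Lambda_\epsilon$) solves, in the sense of \eqref{eq:Poisson}, the Poisson equation with coefficient $a^\omega_\epsilon$ and forcing $f=\pi^k$, which satisfies $|\nabla f|=1$.

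Fix $R>0$ and apply Corollary \eqref{cor:moser} with $\alpha = 2p^*$, and say $\sigma=3/4$, $\sigma'=1/2$, to $\chi^k_\epsilon$ on the ball $B(2R)$. This yields
\begin{equation*}
\|\chi^k_\epsilon\|_{B_R,\infty}\leq C\bigl(1\vee\|\lambda_\epsilon^{-1}\|_{B_{2R},q}\|\Lambda_\epsilon\|_{B_{2R},p}\bigr)^{\kappa'}\bigl(\|\chi^k_\epsilon\|_{B_{3R/2},2p^*}^{\gamma'}\vee\|\chi^k_\epsilon\|_{B_{3R/2},2p^*}\bigr),
\end{equation*}
where $C,\kappa',\gamma'$ depend only on $p,q,d$ and on $c_f=2R$, all of which are fixed. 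Next, the multidimensional Birkhoff ergodic theorem applied to $\lambda^{-q},\Lambda^{p}\in L^1(\mu)$ (assumption \ref{ass:a.2}), after the change of variables $y=x/\epsilon$, gives $\mu$-a.s.
\[
\|\lambda_\epsilon^{-1}\|_{B_{2R},q}\xrightarrow[\epsilon\to 0]{}\mean_\mu[\lambda^{-q}]^{1/q},\qquad \|\Lambda_\epsilon\|_{B_{2R},p}\xrightarrow[\epsilon\to 0]{}\mean_\mu[\Lambda^p]^{1/p},
\]
both deterministic and finite; hence the first factor in the maximal inequality is $\mu$-a.s.\ eventually bounded by a deterministic constant. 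Lemma \ref{lem:control} (applied with radius $3R/2$) makes the second factor tend to zero $\mu$-a.s., and combining the two convergences yields $\|\chi^k_\epsilon\|_{B_R,\infty}\to 0$, which is precisely \eqref{eq:sublin}.

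The main obstacle is the scaling step: one must verify rigorously that $\chi^k_\epsilon\in\F^{\omega,\epsilon}_{loc}$ and that it is a weak solution of the Poisson equation with rescaled coefficients in the sense demanded by the framework of Section \ref{sec:sobmos}. This reduces to Proposition \ref{prop:integrability} combined with the change of variables above, and is essentially bookkeeping. The apparent $R$-dependence of the constant $C$ through $c_f=2R$ is harmless since $R$ is fixed once and for all; in particular no uniformity in $R$ is needed, because the statement is a pointwise (in $R$) sublinearity claim.
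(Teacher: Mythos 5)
Your proposal is correct and follows essentially the same route as the paper: exploit that $\chi^k_\epsilon$ solves the rescaled Poisson equation with forcing $\pi^k$, apply Corollary~\ref{cor:moser} with $\alpha=2p^*$, bound the moment factor $\|\lambda_\epsilon^{-1}\|_{B,q}\|\Lambda_\epsilon\|_{B,p}$ by the ergodic theorem, and let Lemma~\ref{lem:control} kill the $L^{2p^*}$ norm. Your version is a bit more careful about the choice of $\sigma,\sigma'$ and the $R$-dependence of $c_f$, but the argument is the same.
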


\begin{proof}Observe that $\chi_\epsilon^k(x,\omega)\defeq \epsilon\chi(x/\epsilon,\omega)$ is a solution on $B=B(R)$ for all $\epsilon>0$ of
\[
\sum_{i,j}\int_{B}  a_{ij}^\omega(x/\epsilon)\partial_i\chi_\epsilon^k(x;\omega)\partial_j \phi(x)\,dx=\sum_{i,j}\int_B a_{ij}^\omega(x/\epsilon)\partial_i f_k(x)\partial_j \phi(x)\,dx,
\]
where $f_k(x) = x_k$ and $\phi\in C^\infty_0(B)$. Clearly $|\nabla f_k(x)|\leq 1$ for all $x\in \R^d$ and $\epsilon>0$. By Lemma \ref{lem:control},
we get that
\[
\lim_{\epsilon\to0}\|\chi^k_\epsilon(x;\omega)\|_{2p^*,B_R}=0
\]
Therefore, we can obtain \ref{eq:sublin} applying \eqref{cor:moser} with $\alpha = 2 p^*$.
\begin{equation*}
\|\chi_\epsilon^k\|_{B(R),\infty}\leq \\ C_3
\biggl(1\vee\|(\lambda^\omega)^{-1}\|_{B(2R/\epsilon),q}\|\Lambda^\omega\|_{ B(2R/\epsilon),p} \biggr)^{\kappa'} \|\chi_\epsilon^k\|_{B(2 R),2p^*}^{\gamma'}\vee\|\chi_\epsilon^k\|_{B(2 R),2p^*}
\end{equation*}
which goes to zero as $\epsilon\to 0$ by Lemma \ref{lem:control}. Notice that we can bound $\|\lambda^{-1}\|_{B(2R/\epsilon),q}\|\Lambda\|_{ B(2R/\epsilon),p}$ by a constant, by means of \ref{ass:a.2} and the ergodic theorem.
\end{proof}

We can now turn to the proof of Theorem \ref{thm:invprinc}, namely the quenched invariance principle for the diffusions $\epsilon X^\omega_{t/\epsilon^2}$.

\begin{proof}[Proof Theorem 1.1] With the help of Proposition \ref{prop:sublinearity} the proof of this theorem is identical to \cite[Theorem 1]{fannjiang1997}, with only a minor difference, namely,
the limiting matrix $\mathbf{D}=[\mathbf{d}_{ij}]$ is given by
\[
\mathbf{d}_{ij} = 2\mean_\mu[\langle a(\omega) \nabla y^i(0,\omega),\nabla y^j(0,\omega) \rangle]
\]
being $y^i(x,\omega)$ the harmonic coordinates as constructed in Section \ref{sec:corrector}.

For completeness we put her the proof of part (ii) of the theorem and we refer to \cite{fannjiang1997} for the first part.  We make use of the decomposition
\[
\epsilon X^\omega_{t/\epsilon^2} = \epsilon y( X^\omega_{t/\epsilon^2},\omega) + \epsilon \chi( X^\omega_{t/\epsilon^2},\omega).
\]
and the fact that $M^{\epsilon,\omega} = \epsilon y( X^\omega_{t/\epsilon^2},\omega)$ is a $\PR_0^\omega$-square integrable continuous martingale $\mu$-almost surely by Theorem \ref{thm:martingale}. Its quadratic variation is given by
\[
\langle M_h^{\epsilon,\omega}, M_k^{\epsilon,\omega}\rangle_t = \epsilon \int_0^{t/\epsilon^2} 2\sum_{i,j}a_{ij}(X^\omega_s,\omega) (\partial_i\chi^k(X^\omega_s,\omega)-\delta_{ik})(\partial_j\chi^h(X^\omega_s,\omega)-\delta_{jh})\, \,ds.
\]
An application of the ergodic theorem for the environmental process shows that
\[
\lim_{\epsilon\to 0}\langle M_h^{\epsilon,\omega}, M_k^{\epsilon,\omega}\rangle_t = \mathbf{d}_{hk}t,
\]
$\PR^\omega_0$-almost surely, but also in the $L^1$ sense for almost all $\omega\in\Omega$. We can now apply the central limit for martingales \cite[Theorem 5.4]{helland1982} to conclude that the martingale $M^{\epsilon,\omega}$ converges in distribution over $C([0,\infty),\R^d)$ under $\PR_0^\omega$ to a Wiener measure with covariances given by $\mathbf{D}$. The matrix is non degenerate by Proposition \ref{prop:pos}.

It remains to show that the correctors $\epsilon \chi(X^{\omega}_{t/\epsilon^2},\omega)$ converge to zero in distribution. For  that the sublinearity of the corrector will play a major role.

Let $T>0$ be a fixed time horizon. We claim that for all $\delta>0$
\begin{equation}\label{eq:claim}
\lim_{\epsilon\to 0} \PR_0^\omega\Bigl(\sup_{0\leq t \leq T} |\epsilon \chi(X^{\omega}_{t/\epsilon^2},\omega)| >\delta \Bigr) = 0.
\end{equation}
Denote by $\tau_R^{\epsilon,\omega}$ the exit time of $\epsilon X^{\omega}_{t/\epsilon^2}$ from the ball $B$ of radius $R>1$ centered at the origin. Observe that
\begin{align*}
\limsup_{\epsilon\to 0} \PR_0^\omega&\Bigl(\sup_{0\leq t \leq T} |\epsilon \chi(X^{\omega}_{t/\epsilon^2},\omega)| >\delta \Bigr)\\
& \leq \limsup_{\epsilon\to 0} \PR_0^\omega\Bigl(\sup_{0\leq t \leq \tau_R^{\epsilon,\omega}} |\epsilon \chi(X^{\omega}_{t/\epsilon^2},\omega)| >\delta \Bigr)+\limsup_{\epsilon\to 0} \PR_0^\omega\Bigl(\sup_{0\leq t \leq T} |\epsilon X^{\omega}_{t/\epsilon^2}| > R \Bigr).
\end{align*}
\emph{First addendum}: By Proposition
\ref{prop:sublinearity}
\[
\lim_{\epsilon\to 0} \sup_{0\leq t \leq \tau_R^{\epsilon,\omega}} |\epsilon \chi(X^{\omega}_{t/\epsilon^2},\omega)| = 0.
\]
and therefore $\mu$-almost surely
\[
\limsup_{\epsilon\to 0} \PR_0^\omega\Bigl(\sup_{0\leq t \leq \tau_R^{\epsilon,\omega}} |\epsilon \chi(X^{\omega}_{t/\epsilon^2},\omega)| >\delta \Bigr)=0.
\]
\emph{Second addendum}: we use again Proposition
\ref{prop:sublinearity} to say that there exists $\bar{\epsilon}(\omega) >0$, which may depend on $\omega$ such that for all $\epsilon<\bar{\epsilon}(\omega)$ we have $\sup_{0\leq t \leq \tau_R^{\epsilon,\omega}} |\epsilon \chi(X^{\omega}_{t/\epsilon^2},\omega)|<1$. For such $\epsilon$ we have $\mu$-almost surely
\begin{align*}
\PR_0^\omega\Bigl(\sup_{0\leq t \leq T} |\epsilon X^{\omega}_{t/\epsilon^2}|\geq R\Bigr) &= \PR_0^\omega\Bigl(\tau_R^{\epsilon,\omega}\leq T\Bigr)\\
&=\PR_0^\omega\Bigl(\tau_R^{\epsilon,\omega}\leq T,\sup_{0\leq t \leq \tau_R^{\epsilon,\omega}} |\epsilon y(X^{\omega}_{t/\epsilon^2},\omega)|>R-1\Bigr)\\
&\leq\PR_0^\omega\Bigl(\sup_{0\leq t \leq T} |\epsilon y(X^{\omega}_{t/\epsilon^2},\omega)|>R-1\Bigr)
\end{align*}
Since $\epsilon y(X^{\omega}_{\cdot/\epsilon^2},\omega)$ converges in distribution under $\PR_0^\omega$ to a  non-degenerate Brownian motion with deterministic covariance matrix given by $\mathbf{D}$ we have that there exists positive constants $c_1,c_2$ independent on $\epsilon$ and $\omega$ such that
\[
\limsup_{\epsilon\to 0} \PR_0^\omega\Bigl(\sup_{0\leq t \leq T} |\epsilon y(X^{\omega}_{t/\epsilon^2},\omega)|>R-1\Bigr)\leq c_1 e^{-c_2 R},
\]
from which it follows
\[
\limsup_{\epsilon\to 0} \PR_0^\omega\Bigl(\sup_{0\leq t \leq T} |\epsilon X^{\omega}_{t/\epsilon^2}|>r\Bigr)\leq c_1 e^{-c_2 R}.
\]
Therefore
\[
\limsup_{\epsilon\to 0} \PR_0^\omega\Bigl(\sup_{0\leq t \leq T} |\epsilon \chi(X^{\omega}_{t/\epsilon^2},\omega)| >\delta \Bigr) \leq c_1 e^{-c_2 R}
\]
and since $R>1$ was arbitrary, the claim \eqref{eq:claim} follows, namely the corrector converges to zero in law under $\PR_0^\omega$, $\mu$-almost surely.

The convergence to zero in law of the correctors $\epsilon \chi(X_{\cdot/\epsilon^2},\omega)$, combined with the fact that $\epsilon y(X_{\cdot/\epsilon^2},\omega)$ satisfies an invariance principle $\mu$-almost surely and that $\epsilon X^\omega_{\cdot/\epsilon^2} =\epsilon \chi(X_{\cdot/\epsilon^2},\omega)+ \epsilon y(X_{\cdot/\epsilon^2},\omega)$,  implies that also the family $\epsilon X^\omega_{\cdot/\epsilon^2}$ under $\PR_0^\omega$ over $C([0,\infty),\R^d)$ satisfies an invariance principle $\mu$-almost surely with the same limiting law.
\end{proof}

\begin{corollary}
	Let $\theta:\Omega\to\R$ be a $\mathcal{G}$-measurable function and assume that $\theta(\tau_.\omega)$, $\theta(\tau_.\omega)^{-1}\in L^\infty_{loc}(\R^d)$ for $\mu$-almost all $\omega\in\Omega$ and that $\mean_\mu[\theta],\mean_\mu[\theta^{-1}]<\infty$. Let $\mathbf{M}^{\theta,\omega}\defeq(X_t^{\theta,\omega},\PR^{\theta,\omega}_x)$, $x\in\R^d$ the minimal diffusion process associated to $(\E^\omega,\F^{\theta,\omega})$ on $L^2(\R^d,\theta dx)$. Then, for $\mu$-almost all $\omega\in\Omega$, the laws of the processes $\epsilon X^{\theta,\omega}_{t/\epsilon^2}$ over $C([0,\infty),
	\R^d)$ converge weakly   as $\epsilon\to 0$ to a Wiener measure with covariance matrix given by $\mathbf{D}/\mean_\mu[\theta]$, where $\mathbf{D}$ was given in Theorem \ref{thm:invprinc}.
\end{corollary}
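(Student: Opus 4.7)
The plan is to retrace the proof of Theorem~\ref{thm:invprinc} for $\mathbf{M}^{\theta,\omega}$ in place of $\mathbf{M}^{\omega}$, exploiting the fact that the harmonic coordinates $y^k(x,\omega)=x_k-\chi^k(x,\omega)$ and the correctors $\chi^k$ constructed in Section~\ref{sec:corrector} depend only on the bilinear form $\E^\omega$ and on assumptions \ref{ass:a.1}--\ref{ass:a.3}, not on the choice of reference measure. Since $\theta$ is locally bounded above and away from zero, one has $\F^{\theta,\omega}_{loc}=\F^\omega_{loc}$, so that Propositions~\ref{prop:integrability} and \ref{prop:sublinearity} apply verbatim; moreover, Theorem~\ref{thm:min_diff} applied with weight $\theta$ yields a minimal diffusion $\mathbf{M}^{\theta,\omega}$ with a jointly continuous transition density with respect to $\theta\,dx$, removing all exceptional-set difficulties at the starting point $x=0$.

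The first substantive modification concerns the martingale decomposition. Applying Theorem~\ref{thm:martingale} to $(\E^\omega,\F^{\theta,\omega})$ on $L^2(\R^d,\theta\,dx)$ and recalling that the energy measure $\nu_{\langle y^k\rangle}$ is intrinsic to the Dirichlet form with Lebesgue density $f^{kk}=2\sum_{i,j}a^\omega_{ij}\partial_iy^k\partial_jy^k$, its density with respect to $\theta\,dx$ is $f^{kk}/\theta$; hence $M_t^{\theta,\omega}\defeq y(X_t^{\theta,\omega},\omega)$ is a $\PR_0^{\theta,\omega}$-square integrable continuous martingale (note $y(0,\omega)=0$ since $\chi(0,\omega)=0$) with quadratic covariation
\[
\langle M_h^{\theta,\omega},M_k^{\theta,\omega}\rangle_t=2\int_0^t\frac{q^{hk}(\tau_{X_s^{\theta,\omega}}\omega)}{\theta(\tau_{X_s^{\theta,\omega}}\omega)}\,ds,
\]
where $q^{hk}$ is as in Theorem~\ref{thm:harmoniccoho}. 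Integrability of the density along paths is verified via stationarity of the environment process under $\theta\,d\mu$ together with $\mean_\mu[q^{hk}]<\infty$.

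The second modification concerns the ergodic theorem. Repeating Section~\ref{sec:diffre} verbatim with $\theta$ replacing $\Lambda$ (the conservativeness of $\mathbf{M}^{\theta,\omega}$ being ensured by $\mean_\mu[\theta]<\infty$ through the analog of Proposition~\ref{prop:explosion}), the environment process $t\mapsto\tau_{X_t^{\theta,\omega}}\omega$ admits $\theta\,d\mu$ as invariant measure, and the associated semigroup is symmetric, strongly continuous and ergodic on $L^2(\Omega,\theta\,d\mu)$. Normalized Birkhoff then delivers, for any $g\in L^1(\Omega,\theta\,d\mu)$ and $\mu$-a.e.~$\omega$,
\[
\lim_{t\to\infty}\frac{1}{t}\int_0^t g(\tau_{X_s^{\theta,\omega}}\omega)\,ds=\frac{\mean_\mu[g\,\theta]}{\mean_\mu[\theta]}.
\]
Applied to $g=2q^{hk}/\theta$ this yields $\epsilon^2\langle M_h^{\theta,\omega},M_k^{\theta,\omega}\rangle_{t/\epsilon^2}\to \mathbf{d}_{hk}\,t/\mean_\mu[\theta]$, so that the martingale central limit theorem of \cite{helland1982} produces the invariance principle for $\epsilon M^{\theta,\omega}_{t/\epsilon^2}$ with limiting covariance $\mathbf{D}/\mean_\mu[\theta]$.

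Using the decomposition $\epsilon X_{t/\epsilon^2}^{\theta,\omega}=\epsilon M_{t/\epsilon^2}^{\theta,\omega}+\epsilon\chi(X_{t/\epsilon^2}^{\theta,\omega},\omega)$, the corrector part is then shown to vanish in $\PR_0^{\theta,\omega}$-probability uniformly on compact time intervals by the same argument used at the end of the proof of Theorem~\ref{thm:invprinc}: inside a ball $B_R$ one invokes the spatial sublinearity of Proposition~\ref{prop:sublinearity}, while the probability of leaving $B_R$ before a fixed time $T$ is controlled by the tightness coming from the martingale invariance principle just proved. A Slutsky-type argument then transfers the invariance principle to $\epsilon X_{t/\epsilon^2}^{\theta,\omega}$. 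The main obstacle is the careful bookkeeping of the two normalizations—the $1/\theta$ factor coming from the Revuz correspondence on $L^2(\R^d,\theta\,dx)$ and the $1/\mean_\mu[\theta]$ factor from Birkhoff's theorem normalized to the finite invariant measure $\theta\,d\mu$—which together conspire to produce the advertised limiting covariance $\mathbf{D}/\mean_\mu[\theta]$; beyond this, the proof is essentially a line-by-line transposition of Sections~\ref{sec:diffre}, \ref{sec:corrector} and~\ref{sec:proof}.
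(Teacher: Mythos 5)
Your proof is correct, but it takes a genuinely different route from the paper's. You redo the whole pipeline — environment process with invariant measure $\theta\,d\mu$, Revuz correspondence giving the extra $1/\theta$ in the bracket, normalized Birkhoff producing the factor $1/\mean_\mu[\theta]$, then the martingale CLT and the corrector estimate — essentially transposing Sections \ref{sec:diffre}--\ref{sec:proof} to the reference measure $\theta\,dx$. The bookkeeping you describe (the intrinsic energy measure has Lebesgue density $f^{hk}=2q^{hk}(\tau_\cdot\omega)$, so with respect to $\theta\,dx$ the density is $f^{hk}/\theta$; the environment process is stationary and ergodic for $\theta\,d\mu$; applying the normalized ergodic theorem to $g=2q^{hk}/\theta$ gives $2\mean_\mu[q^{hk}]/\mean_\mu[\theta]=\mathbf{d}_{hk}/\mean_\mu[\theta]$) is exactly right, and the vanishing of the corrector and the Slutsky step carry over since Proposition~\ref{prop:sublinearity} is a purely spatial statement about $\chi$.

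The paper instead proves the corollary by a short time-change argument: $\mathbf{M}^{\theta,\omega}$ is the time change of $\mathbf{M}^\omega$ via the inverse $\tau^\omega_t$ of the PCAF $A^\omega_s=\int_0^s\theta(X^\omega_u,\omega)\,du$ (Theorem 6.2.1 of \cite{fukushima1994dirichlet}), so $\epsilon\hat X^\omega_{A^\omega_{t/\epsilon^2}}=\epsilon X^\omega_{t/\epsilon^2}$. The ergodic theorem for the environment process of $X^\omega$ gives $\sup_{s\le t}|\epsilon^2A^\omega_{s/\epsilon^2}-s\,\mean_\mu[\theta]|\to0$ a.s.; combined with Theorem~\ref{thm:invprinc}, this converts the already-established Brownian limit for $\epsilon X^\omega_{\cdot/\epsilon^2}$ into the claimed limit for $\epsilon X^{\theta,\omega}_{\cdot/\epsilon^2}$ with covariance $\mathbf{D}/\mean_\mu[\theta]$, without revisiting the martingale decomposition at all. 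Your approach is self-contained and makes explicit where each $\theta$-factor enters, but it is considerably longer; the paper's time-change route leverages Theorem~\ref{thm:invprinc} directly and is the more economical proof. Both arrive at the same limiting covariance, and I see no gap in your argument.
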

\begin{proof} Let us define the time change
\[
\hat{X}^\omega_t\defeq X_{\tau^\omega_t}^\omega,\quad\tau^\omega_t=\inf\{s>0;\,A_s^\omega\defeq\int_0^s\theta(X^\omega_u,\omega)\,du>t\}
\]	
To get asymptotic for $\epsilon^2 A_{t/\epsilon^2}$ it is easy by means of the ergodic theorem for the environmental process. We can prove as in \cite[Lemma 15]{BaMathieu} that
 \begin{equation}\label{eq:timechange}
\lim_{\epsilon\to 0}\sup_{s\in[0,t]}|\epsilon^2 A^\omega_{s/\epsilon^2}-s \mean_\mu[\theta]|=0,\quad \PR_x^\omega\mbox{-a.s}, \mbox{ a.a. }x\in\R^d,
\end{equation}
for $\mu$-almost all $\omega\in\Omega$.
Observe that $\epsilon\hat{X}^\omega_{A^\omega(t/\epsilon^2)} = \epsilon X_{t/\epsilon^2}^\omega$, then the convergence for $\epsilon\hat{X}^\omega_{t/\epsilon^2}$ $\PR_x^\omega$-a.s, for almost all $x\in\R^d$,
for $\mu$-almost all $\omega\in\Omega$ follows from Theorem 1.1 and \eqref{eq:timechange}. On the other hand the processes $\hat{X}_t^\omega$ and $X_t^{\theta,\omega}$ are equivalent, since they possess the same Dirichlet form, see Theorem 6.2.1 in \cite{fukushima1994dirichlet}. Hence the same convergence holds for $\epsilon X_{t/\epsilon^2}^{\theta,\omega}$.
\end{proof}
\section*{References}

\end{document}